\documentclass[11pt]{article}
\usepackage[pass]{geometry}
\PassOptionsToPackage{table,dvipsnames}{xcolor}

\usepackage{pdfpages}
\usepackage{natbib}
\usepackage[titletoc,toc,title]{appendix}
\usepackage{csquotes}
\MakeOuterQuote{"}
\usepackage{amsmath, amssymb}
\usepackage{algorithm, algpseudocode, verbatim, multirow, paralist, nicefrac, hyperref}
\usepackage{multicol}
\usepackage{caption, subcaption, array, graphicx, pgfplots, tikz, xcolor, setspace}
\usepackage[section]{placeins}
\usepackage{mathtools}

\usepackage{latexsym}
\usepackage{natbib}
\usepackage{amsthm}
\usepackage{cases}
\usepackage{epsfig}

\usepackage{booktabs}
\usepackage{multirow}
\usepackage{amssymb}   

\usepackage{soul}
\usepackage[table]{xcolor}
\definecolor{omittedgray}{gray}{0.93}
\usepackage{scalefnt}
\usepackage{booktabs}
\usepackage[english]{babel}
\usepackage{graphicx}
\usepackage{titlesec}
\usepackage{epsfig}
\usepackage{enumerate}
\usepackage{caption}
\usepackage{booktabs}
\usepackage[english]{babel}
\usepackage{graphicx}
\usepackage{multirow}
\usepackage{rotating}
\usepackage{fancyhdr}
\usepackage{setspace}
\usepackage[section]{placeins}
\usepackage{subcaption}

\usepackage{breqn}

\usepackage{threeparttable}

\usepackage{array, makecell}
\usepackage{pseudocode}
\usepackage{algorithmicx}
\usepackage{algorithm}
\usepackage{url} 
\usepackage{appendix}
\usepackage{ragged2e}
\usepackage{physics}
\usepackage{amsthm}
\usepackage{chngpage}
\newtheorem{proposition}{Proposition}

\newtheorem{lemma}{Lemma}

\usepackage[nocomma]{optidef}
\usepackage{mathtools,zref-savepos}
\newtagform{roman}[]()

\usepackage{xargs}                      
\usepackage[pdftex,dvipsnames]{xcolor}  
\usepackage[colorinlistoftodos,prependcaption,textsize=small]{todonotes}
\pgfplotsset{compat=1.18}
\newcommandx{\unsure}[2][1=]{\todo[linecolor=red,backgroundcolor=red!25,bordercolor=red,#1]{#2}}

\newcommandx{\change}[2][1=]{\todo[linecolor=blue,backgroundcolor=blue!25,bordercolor=blue,#1]{#2}}

\newcommandx{\info}[2][1=]{\todo[linecolor=OliveGreen,backgroundcolor=OliveGreen!25,bordercolor=OliveGreen,#1]{#2}}

\newcommandx{\improvement}[2][1=]{\todo[linecolor=Plum,backgroundcolor=Plum!25,bordercolor=Plum,#1]{#2}}

\newcommandx{\thiswillnotshow}[2][1=]{\todo[disable,#1]{#2}}

\usepackage{authblk} 
    \usepackage{enumitem}
    \usepackage{hyperref}
	\usepackage{xcolor}
    \usepackage{longtable} 

\makeatletter
\renewcommand\section{\@startsection {section}{1}{\z@}%
                                   {-3.5ex \@plus -1ex \@minus -.2ex}%
                                   {2.3ex \@plus.2ex}%
                                   {\normalfont\fontfamily{phv}\fontsize{16}{19}\bfseries}}
\renewcommand\subsection{\@startsection{subsection}{2}{\z@}%
                                     {-3.25ex\@plus -1ex \@minus -.2ex}%
                                     {1.5ex \@plus .2ex}%
                                     {\normalfont\fontfamily{phv}\fontsize{14}{17}\bfseries}}
\renewcommand\subsubsection{\@startsection{subsubsection}{3}{\z@}%
                                    {-3.25ex\@plus -1ex \@minus -.2ex}%
                                     {1.5ex \@plus .2ex}%
                                     {\normalfont\normalsize\fontfamily{phv}\fontsize{14}{17}\selectfont}}
\makeatother

\newcommand{\blind}{1}	
	
\begin{document}
		
		\def\spacingset#1{\renewcommand{\baselinestretch}%
			{#1}\small\normalsize} \spacingset{1}
		
		\if1\blind
		{
			\title
            {Supply Chain Information Sharing among Competitors: Profit-Based Participation under Market Shocks}
			\author{Batuhan \c{C}elik\thanks{celikb@rpi.edu}
            }
            \author{Bahar \c{C}avdar\thanks{cavdab2@rpi.edu}}
            \author{Jennifer Pazour\thanks{pazouj@rpi.edu}
            }
            \affil{\small{Department of Industrial and Systems Engineering, Rensselaer Polytechnic Institute, Troy, NY}}
			\date{ }
			\maketitle
		} \fi
		
		\if0\blind
{

    \title{\bf 
    Supply Chain Information Sharing among Competitors: Profit-Based Participation under Market Shocks
    }
    \author{Author information is purposely removed for double-blind review}
    
\bigskip
    \bigskip
    \bigskip
    \begin{center}
        {\LARGE\bf 
        Supply Chain Information Sharing among Competitors: Profit-Based Participation under Market Shocks
        }
    \end{center}
    \medskip
} \fi
		\bigskip		

		\vspace{-2cm}
	\begin{abstract}

Sharing information can help coordinate supply chain decisions in response to market shocks. 
Yet, firms must weigh these benefits against the risk of strengthening competitors.
We study when competing firms voluntarily join an information-sharing platform to exchange information about market shocks while retaining operational autonomy. Firms differ in information capabilities and decide whether to participate before private signals are realized, creating stochastic information asymmetry. We model participation and quantity decisions as a two-stage dynamic game with incomplete information. For two horizontal competitors, we derive closed-form equilibrium participation strategies that reveal how information power and production efficiency shape participation. Numerical analysis examines more complex supply chain networks.  We find that supply chain characteristics impact participation and the level of information asymmetry is a key factor in the equilibrium platform profiles. Firms with similar information power may benefit from joining forces to reduce their competitive disadvantage against higher information power competitors. Cost-inefficient firms may be less willing to participate because their competitors can exploit shared information, and uncertainty can serve as a competitive shield by restricting the actions of more efficient competitors.  However, when several competitors join the platform, a cost-inefficient firm may also choose to participate to avoid an informational disadvantage.
	\end{abstract}
			
	\noindent%
	{\it Keywords:} information-sharing platform, quantity competition, dynamic game, stochastic information asymmetry.

	\spacingset{1.5} 

\section{Introduction} \label{s:introduction}

Supply chain firms' ability to adapt their operations in response to changes in market conditions is key to improving supply chain performance, especially when disruptions or market fluctuations cause demand shocks \citep{Eckstein19052015}.
Timely access to supply chain information determines this adaptability \citep{topan2018,zheng2019reactive}. 
However, in most supply chains, information flows indirectly through orders or shipments, requiring firms to infer market conditions retrospectively rather than observe them directly \citep{VIVES198471}. 
As a result, firms may observe market shocks at different times or with different accuracies, leading to information asymmetry. 
Information asymmetry is further amplified in competitive environments, where some firms invest in more powerful forecasting or other sensing efforts, resulting in better prediction capabilities of market disruption than other firms \citep{shin2010firms}.

When firms rely on private information and respond to market shocks asynchronously, coordination across the supply chain becomes more difficult.
For example, asymmetric information creates challenges for designing effective coordination mechanisms for replenishment and inventory planning policies  \citep{Hafirstpaper,sahin2002flow}. 
This is because staggered information flow causes undersupply during the actual demand phase and oversupply after demand dissipates. 
This was observed during the COVID-19 pandemic, which disrupted both supply and demand, with demand for some products surging while demand for others fell sharply.
Delayed and decentralized responses led firms to join markets or expand production based on incomplete information, resulting in excess inventories and falling prices, as observed in China’s mask market \citep{SixthTone2020}. 
Such situations illustrate how supply chains may have the collective capacity to respond to shocks but fail to do so in a timely or efficient manner because of information asymmetry.

Numerous studies have shown that sharing operational information, such as observed shocks and changing demand or market parameters, 
across supply chain partners allows firms to improve responsiveness, reduce uncertainty, and enhance overall supply chain performance (\cite{cachon2000supply}, 
\cite{lee2000value}, \cite{chen2000quantifying}, \cite{Ivanov_DSCM2021}, \cite{li2017enhancement}, \cite{wong2021conceptual}, \cite{costantino2014impact}, \cite{gu2021impact},\cite{huo2014effects}). 
Beyond overall system improvements, firms with more information may also choose to share it for their own benefit. 
For example, Walmart shares real-time sales and inventory data with suppliers through Retail Link, improving suppliers' demand forecasting and production decisions and strengthening Walmart's competitive advantage.

These well-established benefits of information sharing in supply chains have been studied extensively, yet this literature commonly assumes firms willingly share information or considers information sharing only through bilateral relationships in supply chain contracts \citep{shen2019review}. 
However, in practice, competing firms rarely participate in information exchange platforms.
Consequently, there exists a gap between the proposed benefits of information sharing and the practical reluctance of competing firms to do so.

Given the limited understanding of what drives competing firms to share information \citep{GUAN2020107450}, we study competing firms' decisions to participate in information-sharing platforms. We consider a set of horizontally competing firms that retain full autonomy over whether to join an information-sharing platform and over their operational decisions. 
Firms decide whether to join a centralized information-sharing platform, and this decision is made by firms before knowing whether they or their competitors will acquire private information (what is referred to as stochastic information asymmetry). 
If a firm joins the platform, it commits to sharing information it detects about market conditions, and the platform distributes this information to all participating firms. 
Following the participation decisions, firms make quantity decisions to maximize their own expected profits based on their available information, either their private information, if they do not participate in the platform, or, if they do participate, the shared information through the platform.
The key question for each firm is whether the benefits of information sharing outweigh the loss of competitive advantage from sharing their private information.

We explore equilibrium participation decisions under stochastic information asymmetry and study how firms’ information capabilities, competitive interactions, and network structure affect their incentives to participate in information-sharing. In doing so, this work makes the following key contributions.  
First, we analyze competing firms' information-sharing decisions as strategic decisions made before they know which firms will acquire private information, and thus before they know whether they will act as an information provider or an information recipient.
Second, we characterize equilibrium participation strategies for two horizontally competing firms and show how differences in information power and production efficiency affect participation. When both firms have equal information power, the higher-cost firm is the participation bottleneck. When production costs are equal, the firm with greater information power is the bottleneck.
We also show that market sensitivity affects participation differently under these two asymmetries, motivating participation under cost asymmetry while discouraging it under information power asymmetry. 
Third, we examine information sharing among multiple firms within a networked competition structure, and numerically observe that denser networks reduce participation incentives. In sparse networks, participation is easier among firms with no common markets, while it becomes more difficult when firms compete in overlapping markets.

This paper is organized as follows. Section \ref{Sec:Lit} reviews related literature on information sharing in supply chains.
In Section \ref{Sec:Problem}, we describe the problem setting and present a formulation. Section \ref{Sec:Analysis} presents an analysis of a small supply chain and explores the key factors in firms' participation decisions. 
In Section \ref{Sec:Numerical}, we provide numerical results to explore larger supply chains to further investigate platform participation. Section \ref{Sec:Conc} provides conclusions and directions for future research.

\section{Literature Review} \label{Sec:Lit}

We review related literature in three areas: studies evaluating supply chain information sharing, studies considering information sharing as a strategic decision in bilateral settings, and studies examining information sharing among multiple firms facilitated by a platform.

\subsection{\emph{Value of Supply Chain Information Sharing}} \label{Sec:Lit_exogenous}
In this stream of literature, information sharing is not treated as a decision; instead, this literature evaluates the system-wide benefits of different information sharing policies. 
By exchanging key information such as supplier capacity \citep{choi2008value, yuan2008incentive}, demand estimates  \citep{lee1997information,chen2000quantifying,lee2000value,linhuanglin1046059, ZHAO2002321, CHO201397}, or both \citep{yuan2008incentive}, these studies show that information sharing in supply chains reduces uncertainty, leading to more effective material flow and reduced costs.

When analyzing information sharing, these studies do not consider firms' individual objectives.
Firms are assumed to participate in information sharing regardless of whether they benefit individually. 
When firms maximize individual profits, incentives to share may diverge from system-wide objectives, particularly when private information provides a competitive advantage. 
For example, \cite{zhang2006competition} shows that inventory information exchange can reduce system-wide inefficiencies; yet it does not necessarily benefit all firms. 
Therefore, although related studies show the value of system-wide information sharing, firms' individual objectives and the value of private information must be considered.

\subsection{\emph{Information Sharing as a Decision Problem in Bilateral Settings}} \label{Sec:Lit_decision}

This stream models information sharing as a strategic decision in bilateral settings, where firms exchange information through one-to-one relationships.

A common setting focuses on interactions in which one firm is the sharer and the other firm is the receiver,
generally in vertical interactions where downstream firms hold private demand information that is beneficial to upstream firms. Because the informed firm lacks incentives to share its information voluntarily, a large group of studies examines how contracts can induce information sharing. \citet{shen2019review} provides a comprehensive review. 
For example, manufacturers may purchase demand forecasts from retailers \citep{li2002supply} or offer supply chain contracts that incentivize information sharing \citep{ha2008contracting, ZHOU2017785,Chen04052025}.
Beyond contracts, different mechanisms can also influence information sharing. For example, reducing production costs may motivate retailers to share demand information \citep{hawithcostreduc}. Other studies find that explicit and tacit collusion between retailers may affect sharing incentives \citep{wu2021anti}.

Even when information is shared only on a one-to-one basis, other firms (not part of the one-to-one sharing agreement) introduce additional strategic dynamics into information sharing decisions. For example, when a retailer interacts with multiple upstream firms, the retailer may choose to share with none, one, or both firms, depending on contract timing and operational dynamics, such as nonlinear costs or competition intensity \citep{shang2016information}. Uninformed competitors may infer information indirectly, weakening incentives for voluntary information sharing \citep{zhang2002inference,li2002}. Furthermore, inference effects can even harm the intended information receiver \citep{inference}. Similarly, shared information can be leaked to competitors, although contracts may prevent such leakage \citep{chen2019supply}. In competitive multi-chain environments, retailers’ incentive for sharing private information bilaterally with their respective manufacturers depends explicitly on rivals’ actions. Although mutual information sharing maximizes joint profits, each retailer can prefer to withhold information when the rival shares, leading to a prisoner’s dilemma–type outcome \citep{GUAN2020107450}.

Another group of studies considers settings in which both firms possess private information but the firms decide whether to share information after private information is observed. In such environments, sharing incentives depend on both strategic interactions and signal characteristics. For example, when a supplier and a retailer have signals with different precision, sharing incentives depend on the variance and correlation of their signals \citep{zhang2013coordination}. In a coopetitive supply chain where one of the firms serves both as the supplier and the competitor of the other firm, sharing information may benefit one firm while harming the other \citep{wang2020}. 
\cite{RAWEEWAN2018269} develops a static game-theoretic model of bilateral information sharing among competing firms, where firms simultaneously decide whether to reveal private signals in a static bilateral information sharing game with exogenously determined payoffs.

While this literature extensively examines strategic bilateral information sharing under asymmetric information and competition, all known work assume two key features. First, firms decide whether to share information after private information is observed, allowing them to make sharing decisions conditioned on realized informational advantage. Second, the interaction structure is predefined: the set of firms and the direction of information flows are given. 
Hence, these studies consider the environments in which the firms already have their private information when making information sharing decisions. 
In many supply chain environments, however, informational advantage varies across realizations, and firms need to make participation decisions in information sharing before observing the information. 
In these cases, participation decisions differ significantly from information sharing decisions based on realized signals. 

\subsection{\emph{Platform-Based Information Sharing Among Multiple Firms}} \label{Sec:Lit_platform}

Because information sharing through bilateral relationships is often difficult to manage when multiple firms operate in the supply chain, a stream of studies examines multi-agent settings, in which a platform facilitates information exchange among multiple firms simultaneously.

Several studies analyze settings where a platform possesses superior information and selectively reveals it to competing firms \citep{Hou28012026, Li2021online, ZHONG2023, zha2023strategic, liu2021information}. 
Online retail platforms are a common example in this setting. In these models, the platform determines the level of information to reveal to each participant, while firm participation is assumed to be given. 
The focus is on how such a platform affects competition and operational outcomes, rather than on firms’ incentives to join the platform. 
\cite{MASSARI2021108150} studies information sharing among competing firms using agent-based simulation, and finds that information sharing can improve performance even under competitive environments. 
However, participation is assumed rather than derived from equilibrium behavior, so firm-level incentives are not explicitly modeled.

These studies highlight the role of platforms in facilitating information sharing among multiple competing firms. However, they assume firms will participate and do not consider their private incentives. 
Two notable exceptions are \cite{Wu01022022platform} and \cite{liu2026mechanism}, both of which study participation decisions in an information-sharing platform. In both settings, each participant has private demand information. Therefore, each competitor knows that, conditioned on participation, it will both share its private information and receive information from other participants.  Specifically, \cite{liu2026mechanism} study privacy-preserving information sharing among firms competing in a single Cournot market. Their platform aggregates participants' private demand signals, considering a privacy preserving mechanism, with its own external information to provide back to participants an aggregate demand estimate. 
\cite{Wu01022022platform} studies participation decisions in an information-sharing platform among competing garages with interdependent demands. In their setting, each garage has complete information about its own demand shock, and platform participation allows it to access competitors' private demand information. 
In contrast, our work is interested in settings where firms do not yet know whether they will share or receive information when they decide participation. 
Although voluntary platform participation has been considered in these two papers, the conditions that shape participation and platform formation under stochastic information asymmetry, where the firms are uncertain about who will have private information, has not been studied.

\section{Problem Definition and Formulation} 
\label{Sec:Problem}

We study firms' participation decisions in an information-sharing platform within a quantity-competition setting in a perishable-product supply chain prone to shocks. 
Firms decide whether to participate before observing the market state and without knowledge of other firms’ private information. Therefore, platform participation is considered a strategic decision.
A firm joins the platform only if participation yields a higher expected profit. 
Since one firm's participation affects the information available to others and their quantity decisions, we model the participation decision in the information-sharing platform as a two-stage dynamic game.
In the first stage, firms decide whether to participate, and in the second stage, they make their quantity decisions given the realized platform configuration.
Our goal is to analyze the information sharing participation decisions under different supply chain settings. 
In the following part, we first present the underlying supply chain environment. We then introduce the two-stage game structure and the information-sharing platform, and formulate the second-stage quantity decision and the first-stage participation decision.

\subsection{\emph{Underlying Supply Chain Environment}}\label{Sec:Supply_chain_setting}

Consider a set of firms $\mathcal{F}$ in a quantity competition across a set of markets $\mathcal{M}$. 
Each firm serves a subset of markets. 
We model firms' competition in markets 
with a two-echelon supply chain as a bipartite network $G=(\mathcal{F} \cup \mathcal{M}, E)$, where $E$ is the set of firm-market edges. 
An edge between a firm $i\in \mathcal{F}$ and market $j\in \mathcal{M}$ indicates that Firm $i$ serves Market $j$. 
In a given supply network $G$, Firm $i$ 
serves a subset of markets $M_i:=\{j \in \mathcal{M}\mid(i,j) \in E\}$, and Market $j$ is served by a subset of firms $F_j:=\{i \in \mathcal{F}\mid(i,j) \in E\}$.

Firms supply a homogeneous seasonal or perishable product, e.g., fresh food or consumer goods with a short shelf life, to markets. Due to the perishable nature of the product, firms do not carry over inventory. Therefore, production decisions are single-period decisions \citep{cachon2011value}. 
In each production period, Firm $i$ chooses the quantity $q_{ij}\geq 0$ supplied to each market $j\in M_i$. Markets operate under clearing conditions; all supplied products are sold within the operational cycle, and firms' quantity decisions directly determine market prices; in particular, higher total supply in a market leads to a lower price \citep{wang2009markdown}. Hence, following the platform participation decision, firms are in a Cournot competition.

Let $Q^j:=\sum \limits_{i\in F_j}q_{ij}$ denote the total supply in Market $j$, 
for each market $j\in \mathcal{M}$, the market-clearing price is
\begin{equation}
P_j(Q^j)=a_j -b_j\cdot Q^j,
\tag{Baseline market price}
\end{equation}
where $a_j>0$ is the price intercept of Market $j$, and $b_j>0$ measures how sensitive the price is to total supply in the market. 
We refer to these parameters as \textit{market size} and \textit{market sensitivity}, respectively.
The linear price function follows standard market-clearing models widely used in the Cournot competition literature \citep{CAO2013425, fang2015managing, bimpikis2019cournot}.

We assume convex production costs, reflecting a soft production capacity for the firms \citep{bimpikis2019cournot}. Specifically, Firm $i$'s total production cost is
\begin{equation}
        C_i\left(\sum \limits_{j\in M_i}q_{ij}\right)=c_i\cdot \left(\sum \limits_{j\in M_i}q_{ij}\right)^2, 
    \end{equation}
where $c_i>0$ is a firm-specific cost parameter that captures Firm $i$'s \textit{production efficiency} or \textit{cost competitiveness}.

\noindent\textbf{Impact of Shocks:} The supply chain environment described above represents a \textit{baseline case} characterized by the market-clearing price parameters $(a_j,b_j)$. 
However, supply chain disruptions can affect these parameters.
The supply chain may experience a shock with probability $\omega$. 
Conditional on a shock occurring, we assume that it affects a single Market $j$ with probability $r_j>0$, where $\sum\limits_{j\in \mathcal{M}}r_j=1$, providing a tractable modeling approach \citep{gao2019disruption,valuedisruptionduration,Tomlin,SEREL2008115}. 
Indicator $\theta \in \mathcal{M} \cup\{0\}$ denotes the realized market state, where $\theta=0$ represents no shock and $\theta=j$, $j\in \mathcal{M}$ represents a shock occurring at Market $j$. 
Accordingly, the prior distribution over the market state can be expressed as 
\begin{equation*}
       \Pr(\theta=0) = 1-\omega  \quad (\text{no shock}) \qquad \text{and} \qquad \Pr(\theta=j) =\omega r_j  \quad (\text{Shock in Market $j \in \mathcal{M}$}).
\end{equation*}

 A shock in a market affects the price dynamics, shifting its price intercept by $D$. 
Therefore, the state-dependent prices under the market-shock effect are represented as follows
\begin{equation}
    P_{j}(Q^j|\theta)= a_j+D\cdot\,\mathbf{1}\{\theta=j\}-b_j\cdot Q^j,
    \tag{State-dependent market price }
    \end{equation}
where $D>0$ is a constant parameter, representing the \textit{shock magnitude}.

When a shock occurs, firms may receive private information before making production decisions. Specifically, Firm $i$ receives an independent \textit{shock signal} with probability $p_{i}$, reflecting its \textit{information power}. 
The information power of each firm is common knowledge, but firms do not observe whether other firms have received a signal or not.
A firm can observe a signal only when there is actually a shock. 
That is, when there is no shock, firms do not receive a false signal that might lead them to believe there is one.
We assume that the observed signal is accurate. 

Firm $i$’s profit equals its total revenue from the markets it serves minus the production cost. For a given market state $\theta$, Firm $i$’s profit function is expressed as
\begin{equation}
\pi_i(\vec{q},\theta)=\sum_{j \in M_i} P_j(Q^j \mid \theta)q_{ij}-C_i\left(\sum \limits_{j\in M_i}q_{ij}\right).
\label{Eq:Profit}
\end{equation}
where $\vec{q}=(q_{kl})_{(k,l)\in E}$ is the vector of quantities supplied by firms to markets.

Private signals create information asymmetry, leading to a Bayesian Cournot competition in quantity decisions.

\subsection{\emph{Information-sharing Platform}}

Market shocks create challenges for efficient quantity decisions, leading to under- or oversupply. 
The information-sharing platform aims to improve supply chain performance by reducing information asymmetry among participating firms through the sharing of private signals. 
Firms’ participation decisions can be driven by two motives: improving their own access to market information to support better decision-making and reducing uncertainty regarding other firms' information state. If every firm observes the market shock but is unaware of other firms' observations, they will produce more than what they would under complete information. This creates oversupply in the market, which can be avoided by information sharing.

The process that governs the firms' platform participation decision can be modeled as a two-stage dynamic game.
In the first stage, firms simultaneously decide whether to participate in the information-sharing platform before observing the realized market state or other firms’ private information. 
Each set of participating firms represents a unique \textit{platform participation configuration} (hereafter referred to as \textit{configuration}), denoted by a binary vector $\vec{V} \in \{0,1\}^{|\mathcal{F}|}$, where $V_i=1$ indicates that Firm $i$ participates in the platform.

Once the platform is formed, in the second stage, a market shock may occur, and firms choose their quantities based on the observed information conditional on the realized platform configuration $\vec{V}$ in a networked Cournot competition under uncertainty.
Nonparticipating firms rely on their own private information, while participants share the platform's accumulated information power, denoted by $p_{\text{platform}}(\vec{V})$ for a platform configuration $\vec{V}$, defined as the probability that at least one participant detects an existing market shock. 
According to the independent detection assumption introduced earlier, platform $\vec{V}$'s detection probability is
\begin{equation}
    p_{\text{platform}}(\vec{V})
= 1 - \prod_{i \in \mathcal{F}} (1 - p_i)^{V_i}. \tag{Platform's information power}
\end{equation}
Therefore, for any participating Firm $i$ (i.e., $V_i = 1$), we have $p_{\text{platform}}\geq p_i$.

The sequence of events and decisions is presented in Figure~\ref{fig:timeline}.

\begin{figure}[h]
\centering
\begin{tikzpicture}[>=stealth, font=\normalsize]

    \draw[thick,->] (0,1.25) -- (16,1.25);

    \draw[thick] (2,1.25) -- (2,1);
    \draw[thick] (6,1.25) -- (6,1);
    \draw[thick] (10,1.25) -- (10,1);
    \draw[thick] (14,1.25) -- (14,1);

    \node[align=center] at (2,2.1) {\textbf{First-Stage} \\ \textbf{Decisions}};
    \node[align=center] at (2,0) {Firms decide \\whether to join\\ the platform.};

    \node[align=center] at (6,0) {Shock \\realization};

    \node[align=center] at (10,0) {Private signal \\observation. Participants\\ share signals.};

    \node[align=center] at (14,2.1) {\textbf{Second-Stage} \\ \textbf{Decisions}};
    \node[align=center] at (14,0) {Firms choose\\ their production\\ quantities.};

\end{tikzpicture}
\caption{Sequence of events for platform participation decisions in the first stage and the quantity decisions in the second stage.}
\label{fig:timeline}
\end{figure}

To illustrate the information-sharing platform in more detail, we present an example.

\textbf{Example. }
\emph{Consider a small supply chain with three firms and two markets as shown in Figure \ref{Fig:network}. Suppose that, in the first stage, only Firms 2 and 3 join the platform, represented by platform configuration $\vec{V}=[0,1,1]$. 
Once the platform is formed, the supply chain will operate as usual, experiencing occasional market shocks.
While the information power of the platform formed by Firms 2 and 3 is constant, the actual information may differ depending on the realized shock and its observability. The ``Possible Realizations'' row in Figure \ref{Fig:platform} illustrates all nine possible cases based on the realizations of shock occurrence, signal detection by each firm, and information sharing under this platform configuration.}

\begin{figure}[h]
    \centering
    \begin{subfigure}[t]{0.20\textwidth}
        \centering
        \includegraphics[width=0.8\textwidth]{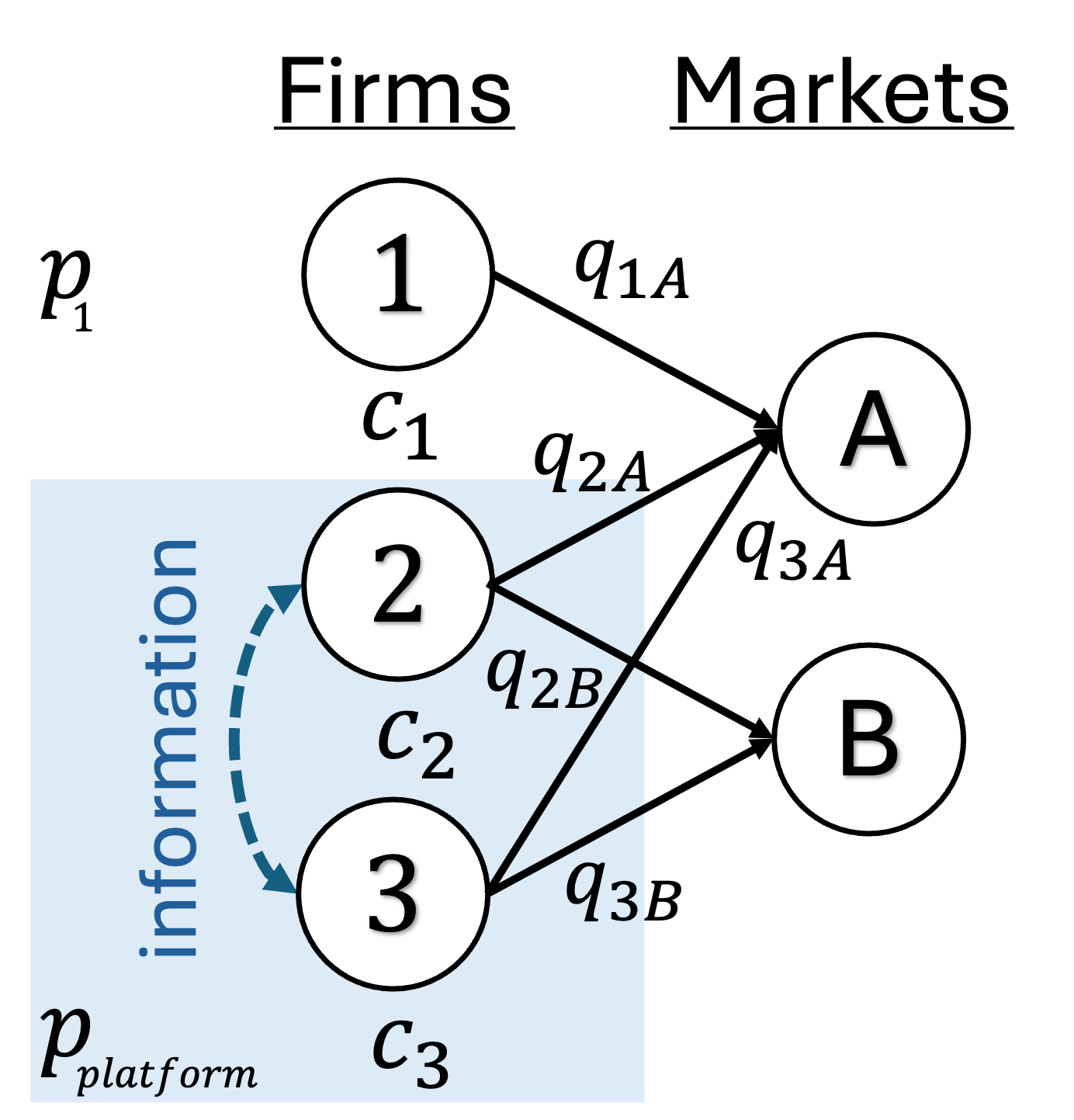}
        \caption{Small supply chain network.}
        \label{Fig:network}
    \end{subfigure}
    \hfill
    \begin{subfigure}[t]{0.7\textwidth}
        \centering
        \includegraphics[width=\textwidth]{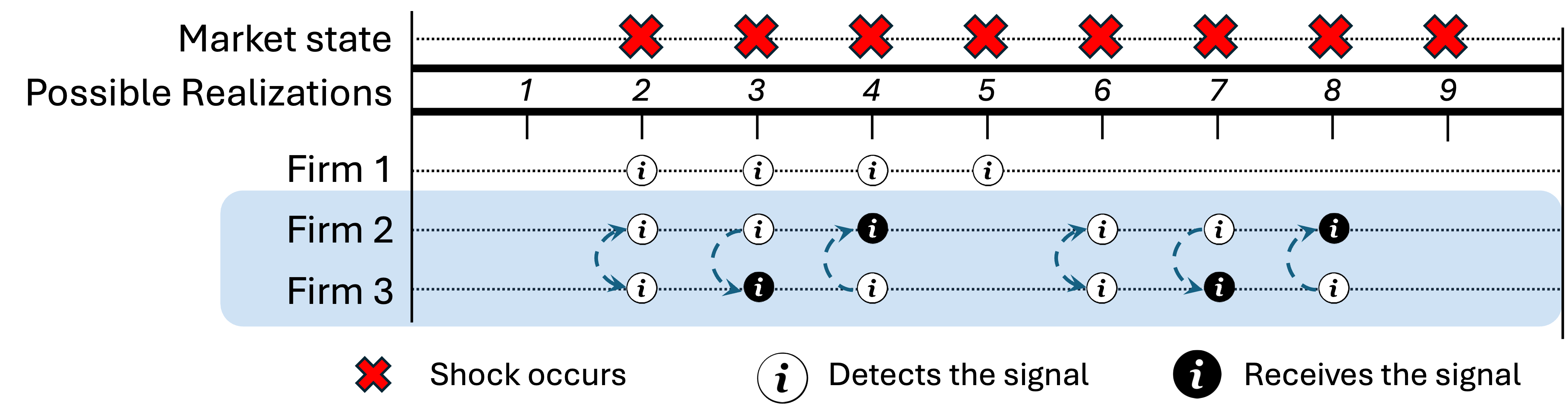}
        \caption{Possible realizations of firms' signals and information exchange among participants in an illustrative example.}
        \label{Fig:platform}
    \end{subfigure}
    \hfill

    \caption{Example for information sharing with a platform: Structural (left) and operational (right) views of a three-firm two-market example supply chain, where Firms 2 and 3 participate in the platform (i.e., $\vec{V}=[0, 1, 1]$). }
    \label{Fig:sample}
\end{figure}

\emph{We explain the trade-offs of platform participation from the perspective of Firm 3. In realizations 1, 5, and 9, neither Firm 2 nor Firm 3 observes a shock signal. Therefore, Firm 3's belief that `the market has no shock' is stronger than its belief about the same condition if it were not on the platform. 
In realizations 3 and 7, Firm 3 receives the shock signal through information sharing. In realization 7, participation gives Firm 3 an advantage over the non-participating Firm 1, which remains unaware of the shock.   
In all realizations, Firm 3 knows Firm 2's information state (and vice versa), which reduces uncertainty in Firm 3's decision problem. Particularly in realizations 2 and 6, Firm 3 knows Firm 2 also has the signal and adjusts its quantity decisions to avoid overproduction.}

In the following part, we formulate firms' quantity decisions in the second stage under a given platform configuration (including the no-platform case) as a Bayesian Cournot competition. Using this model, we then determine equilibrium platform configurations.

\subsection{\emph{Firms' Quantity Decision}}\label{Sec:bayesian_game_formulation}%
Because products are perishable and there is no inventory carryover or backordering, firms' quantity decisions are independent across time periods.  
Therefore, we model the second-stage quantity competition as a static Bayesian quantity game that captures uncertainty from market shocks and asymmetric information. 
The game structure depends on both the supply chain network and the platform participation configuration. 
We denote the corresponding Bayesian game under a specific information-sharing platform configuration $\vec{V}$ 
by $\Gamma(\vec{V})$. 
Thus, our model can be viewed as a family of Bayesian games.
The game elements are as follows.

\textit{\underline{Firm Types:}} In the Bayesian quantity game, a firm's type includes the market shock information it has prior to making its quantity decision.
Firms participating on the platform, if it forms, are of the same type because they share the same information. Hence, a firm’s type depends on the realized shock information and the platform configuration.

Let $\mathcal{T}$ be the set of all firm types; we denote firm $ i$'s type by $z_i$.
Type $z_i = ( +, j )$ indicates that Firm $i$ has a shock signal for Market $j$, either through its own observation or through information sharing. Type $z_i = (-)$ implies that Firm $i$ does not have any signal. Let $\vec{z}_{-i}$ denote the type profile of all firms except Firm $i$, and let $\vec z=(z_i,\vec z_{-i})$ denote the type profile of all firms.

\textit{\underline{Beliefs:}} Each firm forms beliefs over the market state $\theta$ and the types of other firms $\vec{z}_{-i}$. 
These beliefs are based on common prior knowledge about each firm and the market, as well as on the firm's private information and information sharing induced by the platform configuration $\vec{V}$. 
Note that firms on the platform share the same type as other participants. 
Firm $i$'s posterior beliefs over the system state are represented by the conditional joint distribution $Pr(\theta,\vec{z}_{-i}\mid z_i,\vec{V})$.  
Explicit expressions for the prior and posterior probabilities are provided in Appendix \ref{Sec:Appx:probabilities}.

\textit{\underline{Actions:}} 
Based on its belief, each firm decides the quantities it supplies to the markets it serves.
We denote the production quantities of Firm $i$ by $q_i(z_i;\vec V)=(q_{ij}(z_i;\vec V))_{j \in M_i}$ and the full quantity vector by
$\vec q (\vec z;\vec V)= (q_{ij}(z_i;\vec V))_{(i,j)\in  E}$.

\textit{\underline{Payoff Function:}} In the Bayesian quantity game $\Gamma(\vec{V})$, a firm's payoff is its expected realized profit over the firm's belief about the market state and firm types. By taking the expectation over $\theta$ and $\vec z_{-i}$ in the profit in Equation \ref{Eq:Profit}, the payoff of Firm $i$ given type $z_i$ is

\begin{equation}
\begin{aligned}
\sum_{\theta\in \{0\}\cup \mathcal{M}}
\sum_{ \vec{z}_{-i}}
Pr(\theta,\vec{z}_{-i}\mid z_i,\vec{V})\,
 \cdot \pi_i\!\left(\vec q \;(\vec z;\vec V),\theta\right). 
\end{aligned}
\end{equation}

\textit{\underline{Bayesian Nash Equilibrium:}}
A collection of strategies $q^*_i(z_i;\vec V)$, $i\in \mathcal F$ and $z_i\in\mathcal T$, is a Bayesian Nash equilibrium if, for every firm $i$ and its type $z_i$, the following condition holds:
\begin{equation}
q_{i}^*(z_i;\vec{V})\in
\operatorname*{arg\,max}_{q_{i}} \mathbb{E}\!\left[\pi_i\!\left(\vec q \;(\vec z;\vec V),\theta\right)\mid z_i,\vec{V}\right],
\end{equation}
where, in the maximization, $\vec{q}$ consists of Firm $i$'s quantity choices $q_i$
and the type-dependent equilibrium quantities $q_k^*(z_k;\vec{V})$ of all other firms $k \neq i$.

By solving all firms' maximization problems simultaneously, we find the equilibrium quantities.
For a given type profile $\vec{z}$, we denote the equilibrium quantity vector under platform configuration $\vec{V}$ by
\begin{equation}
\vec{q}^{\,*}(\vec{z};\vec{V})
=
\left(q^*_{ij}(z_i;\vec{V})\right)_{(i,j)\in E}.
\end{equation}

\subsection{\emph{Platform Participation Decision}}

When making the platform participation decisions in the first stage, a firm's payoff is the expectation of its profit over all realizations of shocks and signals when firms play the Bayesian Nash equilibrium of the corresponding quantity decisions $\Gamma (\vec{V})$, given by
\begin{equation}
\Pi_i(\vec{V})
= \sum_{\theta\in \{0\}\cup \mathcal{M}}
\sum_{\vec z\in \mathcal{T}^{|\mathcal F|}}
\Pr(\theta,\vec z\mid \vec{V})\,
\cdot
\pi_i\!\left(\vec{q}^{\,*}(\vec z;\vec{V}),\theta\right).
\end{equation} 

A platform configuration $\vec{V}^*$ is an equilibrium if none of the firms can increase its payoff by unilaterally changing its participation decision.%

To analyze the participation decisions, we solve the family of Bayesian quantity games $\{\Gamma(\vec{V})\}_{\vec{V}\in \{0,1\}^{|\mathcal{F}|}}$ and compute the associated payoffs $\Pi_i(\vec{V})$ for all firms to identify no-incentive-to-deviate cases.
Since the number of possible configurations is exponential in the number of firms, identifying the set of equilibria is computationally demanding.
Therefore, in the next section, we first analyze a small supply chain to derive insights.
Then, we explore how these insights extend to larger supply chains, where network structure creates additional strategic interactions, through a numerical study in Section \ref{Sec:Numerical}.

\section{Equilibrium Analysis on Small Supply Chains}
\label{Sec:Analysis}
We consider a small supply chain setting where two firms serve a single market.
This setting provides a baseline that isolates the information sharing tradeoff from network effects: participation improves a firm's information availability but also reveals information to the competitor. We characterize how shock attributes, firms' information and competition power affect firms' participation incentives for homogeneous firms, and for firms that differ in production costs or information power. 
We present the production quantities used in the following results in Appendix \ref{App:Equilibrium}.

In the two-firm case, unilateral participation does not form a platform; hence, only two platform configurations are relevant for equilibrium: (i) the baseline supply chain, $\vec{V}=[0, 0]$, and (ii) both firms join the platform and share their demand signals, $\vec{V}=[1, 1]$. 
Configurations $[0, 1]$ and $[1, 0]$ are the same as $[0, 0]$, and $[0,0]$ is always an equilibrium.

The \emph{participation gain} for Firm $i$ is equal to the expected net benefit of information sharing and is computed as follows:
\begin{equation}\Delta_i:=\Pi_i([1,1])-\Pi_i([0,0]).
\label{Eq:Gain}\end{equation}
The platform forms when $\Delta_{i} \geq 0$ for $i\in \{1, 2\}$.

We present the derivation of $\Delta_i$ and the firms' participation decision analysis in Appendix \ref{App:participation_gain_derivation}.
In the following part, we analyze the participation decisions in terms of firms' characteristics and market parameters. 
All proofs are in Appendix \ref{App:participation_proofs}.

\subsection{\emph{Homogeneous Firms}}

In the homogeneous case, firms are identical in both information power and production costs, i.e.,  ($p_1 = p_2=p$ and $c_1 = c_2=c$). 
In this case, since firms' participation decisions are symmetric, the platform forms when either firm has an incentive to join. 

Proposition \ref{prop:homogeneousregions} characterizes the conditions under which the information-sharing platform is formed.

\begin{proposition}[Characterization of platform participation]
\label{prop:homogeneousregions}
Homogeneous firms participate in an information-sharing platform if and only if at least one of the following conditions holds:
\begin{enumerate}[label=(\roman*)]

\item \textbf{High cost and low market sensitivity:} $c \geq \frac{1}{2(1+\sqrt{2})}\,b_A$.

\item \textbf{High information power:} $ p \geq
\frac{1}{2}\sqrt{\frac{8+13y}{y}}
-\frac{4+3y}{2y}$,
where
$y=\frac{b_A}{c}$.

\item \textbf{High shock probability:}
$
\omega \geq \bar \omega(y,p)= 
\frac{
4+4y(1+p)-y^2\!\left(1-3p-p^2\right)
}{
(3y+2)p\left(2+y(-1+2p)\right)
}$, 
where 
$y=\frac{b_A}{c}$.

\end{enumerate}

\end{proposition}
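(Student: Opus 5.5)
The plan is to compute $\Delta_i$ in closed form for the symmetric two-firm, one-market case (market $A$). I would then reduce the sign condition $\Delta_i\ge 0$ to a single polynomial inequality in $(\omega,p,y)$ with $y=b_A/c$, and finally read off the three regions by a case analysis on that inequality.

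\emph{Step 1 (second-stage equilibria).} Using the equilibrium quantities of Appendix~\ref{App:Equilibrium}, I would write the linear best-response system for each configuration.

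Under $[0,0]$ each firm has two types, $(+,A)$ and $(-)$. The informed type knows $\theta=A$, and with $q_s$ denoting the informed (signal) quantity and $q_u$ the uninformed one, it believes the rival is informed with probability $p$. So its first-order condition is
\[
a_A+D-b_A\bigl(pq_s+(1-p)q_u\bigr)-2(b_A+c)q_s=0 .
\]
The uninformed type uses the posterior
\[
\Pr(\theta=A\mid -)=\frac{\omega(1-p)}{1-\omega p}.
\]
Conditional on $\theta=A$ and its own non-detection, the rival is still informed with probability $p$. Solving this $2\times 2$ symmetric system gives $q_s,q_u$.

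Under $[1,1]$ the platform type is common knowledge. This is a complete-information Cournot game in two states: shock known, with probability $\omega p_{\text{platform}}=\omega(1-(1-p)^2)$, and no signal. In the no-signal state the posterior on the shock is
\[
\frac{\omega(1-p)^2}{1-\omega+\omega(1-p)^2}.
\]

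\emph{Step 2 (gain).} Every equilibrium profit in a linear-quadratic Cournot game equals $(b_A+c)q^2$ evaluated at the firm's own equilibrium quantity. Hence I can write
\[
\Pi_i=(b_A+c)\,\mathbb{E}\bigl[q_i^{*2}\bigr],
\]
which avoids expanding revenues. The baseline terms in $a_A$ contribute identically to both configurations. I expect them to cancel, together with the cross terms in $a_A D$, because the expected quantity is unchanged by the information structure, a standard property of linear Bayesian Cournot games. That leaves $\Delta_i=D^2\cdot(\text{variance-type terms})$. After dividing by $D^2/c$ and clearing the (positive) denominators coming from $1-\omega p$ and $1-\omega+\omega(1-p)^2$, I aim to obtain
\[
\Delta_i \propto \omega\,K(y,p)-N(y,p),
\]
with a manifestly positive prefactor (containing $\omega p(1-p)$). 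Here
\[
K=(3y+2)p\bigl(2+y(2p-1)\bigr),\qquad N=4+4y(1+p)-y^2(1-3p-p^2).
\]
The claimed threshold $\bar\omega=N/K$ indicates that this is the right target form, and I would verify the identity by direct simplification, ideally checked symbolically.

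\emph{Step 3 (regions).} This is the main obstacle: translating "$\omega K\ge N$ for the given $\omega\in(0,1)$" into the disjunction (i)--(iii). I would split on the signs of $N$ and $K$.
\begin{itemize}
\item If $N\le 0$ and $K\ge 0$, participation holds for every $\omega$. At the boundary this should produce condition~(i) (the roots $y=2\pm2\sqrt2$ of $4+4y-y^2$ at the critical $p$) together with condition~(ii). Condition (ii) is the positive root in $p$ of the quadratic obtained from the relevant boundary equation: $p=\tfrac12\sqrt{(8+13y)/y}-\tfrac{4+3y}{2y}$ is the positive root of $yp^2+(4+3y)p+c_0(y)=0$, and I would identify $c_0$ by matching.
\item If $K>0$ and $N>0$, the condition is exactly $\omega\ge\bar\omega$, i.e.\ condition~(iii).
\item Finally I must rule out, or absorb into (i)--(ii), the case $K<0$. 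This requires $y(1-2p)>2$, i.e.\ low $p$ and large $y$. There I would show $N>0$, so that $\Delta_i<0$, consistent with none of the conditions holding; note that $\bar\omega<0$ then makes (iii) vacuous only if this sign check is done carefully.
\end{itemize}
I expect the delicate part to be showing that the boundaries $N=0$ and $K=0$ arrange themselves so that the union of the "always participate" region and $\{\omega\ge\bar\omega\}$ is exactly described by (i)--(iii). I would handle this by plotting the curves in the $(y,p)$ plane and proving monotonicity of $N$ in $p$ for fixed $y$.
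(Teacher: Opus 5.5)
\textbf{Sign error in Step~2.} Your Step~2 reduction is sound, and it is cleaner than the paper's route, which simply factors a numerator produced by computer algebra. Type-conditional profits are $(b_A+c)q^2$. The mean quantity is $(a_A+\omega D)/(3b_A+2c)$ under both configurations. What remains is a difference of quantity variances.

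The gap is that you fix the \emph{sign} of the resulting affine expression by reverse-engineering it from $\bar\omega=N/K$, and you get it backwards. Carrying your own computation through (the $\omega^2$ terms cancel) gives
\[
\Delta_i=\frac{(b_A+c)\,c^2\,\omega(1-\omega)^2p(1-p)D^2}{(\text{positive})}\,\bigl(N-\omega K\bigr).
\]
This is the paper's condition $A(y,p)-\omega B(y,p)\ge 0$ with $A=N$, $B=K$. Your target $\omega K-N\ge 0$ contradicts condition~(i) at once. As $y\to 0$, $N\to 4$ and $K\to 4p$, so $\omega K-N<0$, and you would predict non-participation exactly where the proposition asserts participation. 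The threshold formula only determines the zero set, not the orientation, so a check of this kind is needed.

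\textbf{Consequences for Step~3.} With the orientation reversed, your case analysis is inverted and two of its planned claims are false.
\begin{itemize}
\item The threshold in (iii) actually binds on $\{N<0,\,K<0\}$. This set is nonempty: $y=10$, $p=0.15$ gives $N=-2.75$, $K=-24$, $\bar\omega\approx 0.11$. So ``if $K<0$, show $N>0$'' cannot be proved.
\item Condition~(ii) is the set $\{N\ge 0\}$, not part of an $N\le 0$ region. You did correctly identify its threshold as the root of $N=0$ in $p$.
\end{itemize}

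\textbf{What the correct case analysis needs.} You already plan to show $N$ is increasing in $p$. Since $N(y,0)=4+4y-y^2$, condition~(i) is exactly the regime where $N>0$ for all $p$. The paper's argument needs two further facts that are absent from your plan.
\begin{itemize}
\item At the root $\hat p=(y-2)/(2y)$ of $K$, one has $N(y,\hat p)=(y+2)(3y+2)/4>0$. Hence $\bar p(y)<\hat p(y)$, and the case $N<0<K$ is empty.
\item Whenever $K>0$, $N-K=(1-p)\bigl(4+4y-y^2+p(4y+5y^2)\bigr)>0$; at $p=\hat p$ the bracket equals $y(3y+2)/2$. So $\bar\omega>1$ there, and participation holds for every $\omega\in(0,1)$.
\end{itemize}
Combined with the trivial case $N\ge 0\ge K$, these show that participation holds for all $\omega$ if and only if $p\ge\bar p(y)$. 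Otherwise both $N,K<0$, and participation holds if and only if $\omega\ge\bar\omega$.
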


\begin{figure}[htbp]
    \centering

    \begin{subfigure}[t]{0.31\textwidth}
        \centering
        \includegraphics[width=\textwidth]{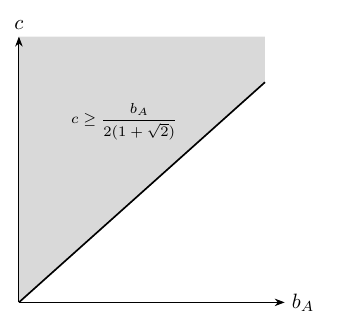}
        \caption{High cost and low market sensitivity}
        \label{Fig:bc_region}
    \end{subfigure}
    \hfill
    \begin{subfigure}[t]{0.31\textwidth}
        \centering
        \includegraphics[width=\textwidth]{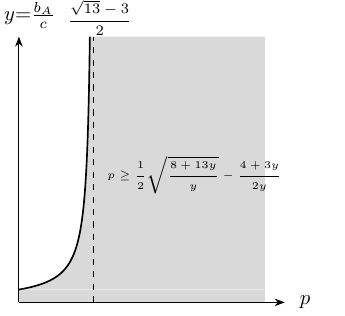}
        \caption{High information power}
        \label{Fig:py_region}
    \end{subfigure}
    \hfill
    \begin{subfigure}[t]{0.31\textwidth}
        \centering
        \includegraphics[width=\textwidth]{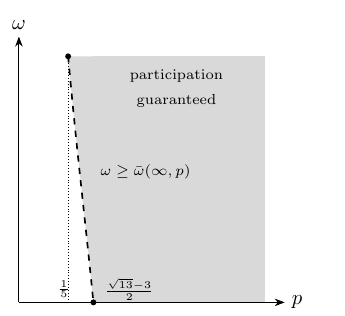}
        \caption{High shock probability}
        \label{Fig:pomega_region}
    \end{subfigure}

    \caption{Illustration of Proposition \ref{prop:homogeneousregions}: Participation conditions for homogeneous firms to form a platform. Platform forms in shaded regions.} 
    \label{Fig:homogeneous_participation_regions}
\end{figure}

These conditions identify when the benefit of improved information outweighs the strategic cost of revealing private information. 
We illustrate the regions defined by these conditions in  Figure~\ref{Fig:homogeneous_participation_regions}.

(i) \textbf{High cost and low market sensitivity (Figure \ref{Fig:bc_region}):} Participation is guaranteed when production costs are sufficiently high relative to market sensitivity (higher $c$ or lower $b$). 
With high production costs, the competitor's reaction to shared information is limited. With low market sensitivity, even when the competitor increases its production in response, the price does not change significantly.
These effects diminish the intensity of competition, reducing the strategic cost of sharing information and making platform participation more attractive.

(ii) \textbf{High information power (Figure \ref{Fig:py_region}):}
  When both firms have high information power, sharing information does not lead to a significant informational advantage, since the competitor is likely to observe the shock as well. Instead, the platform helps firms resolve uncertainty about each other's types, enabling them to choose production quantities based on more accurate beliefs. 

(iii) \textbf{High shock probability (Figure \ref{Fig:pomega_region}):} When the shock is a low-probability event, the signals become strategically valuable, and firms prefer to keep it private to maintain informational advantage. On the other hand, as the probability of a shock increases, the value of private information decreases, while the benefit of eliminating information asymmetry remains, making firms more willing to share information through the platform.

In summary, under conditions (ii) and (iii), the private signal has limited value, whereas under condition (i), the adverse economic consequences of information disclosure are limited for homogeneous firms.

\subsection{\emph{Heterogeneous Firms}} \label{Sec:heterogeneous}

The analysis of homogeneous firms shows how supply chain parameters interact to shape participation decisions. However, firms often differ in production efficiency or information power. We therefore examine how these asymmetries affect participation incentives. Because the platform requires both firms to participate, we focus on the firm with the weaker incentive, i.e., the bottleneck for platform formation. We examine the two sources of asymmetry separately, varying one parameter at a time to isolate their effects.

\subsubsection*{Cost Asymmetric Firms ($p_1 = p_2=p$ and $c_1 \neq c_2$):} 
We compare the participation incentives of cost-efficient and -inefficient firms to identify the bottleneck firm and analyze its participation conditions.

We first show that when firms differ only in cost efficiency, neither market size nor disruption magnitude affects platform formation, as in the homogeneous-firm case.

\begin{proposition}[Participation under same information power]
\label{cor:positive_D}
In two-firm, single-market supply chains where firms have the same information power, neither the market size nor the (positive) shock magnitude changes platform participation decisions. 
\end{proposition}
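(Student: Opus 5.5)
The plan is to compute the participation gain $\Delta_i$ from (\ref{Eq:Gain}) in closed form for the two-firm, single-market case with $p_1=p_2=p$ and general $c_1,c_2$, and then show that it factors as $\Delta_i = D^2\cdot g_i(\omega,p,b_A,c_1,c_2)$ for a function $g_i$ that involves neither $a_A$ nor $D$. Since $D>0$, $D^2>0$, so the sign of $\Delta_i$, and hence the platform formation condition $\Delta_1\ge 0,\ \Delta_2\ge 0$, depends only on $(\omega,p,b_A,c_1,c_2)$.

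\emph{Step 1: split equilibrium quantities by linearity.} For each configuration $\vec V\in\{[0,0],[1,1]\}$, the Bayesian Cournot first-order conditions are linear in the quantities. The intercepts are $a_A$ in every type-conditional expectation, plus $D$ times a posterior shock probability. So I will write each type-contingent equilibrium quantity (Appendix \ref{App:Equilibrium}) as
\[
q_i^*(z_i;\vec V)=a_A\,\alpha_i+D\,\beta_i(z_i;\vec V),
\]
where $\alpha_i$ is the complete-information no-shock coefficient. It is the same across types and configurations because the $a_A$ part of the system does not depend on beliefs. I assume interior solutions throughout.

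\emph{Step 2: decompose expected profit.} Each firm's ex-ante profit is a quadratic form in $(a_A, D)$:
\[
\Pi_i(\vec V)=a_A^2 A_i + a_A D\, B_i(\vec V) + D^2 C_i(\vec V).
\]
The $a_A^2$ term is the deterministic Cournot profit and is identical for both configurations, so it cancels in $\Delta_i$.

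\emph{Step 3: show the cross term is configuration-independent.} By the law of iterated expectations, the ex-ante expected $D$-component of each firm's quantity, $\mathbb{E}[\beta_i(z_i;\vec V)]$, does not depend on how information is partitioned. The first-order conditions averaged over types give a linear system in these expectations whose right-hand side is $\omega$ times the shock term regardless of $\vec V$. The term $a_A D B_i(\vec V)$ is linear in these expected shock-driven quantities and in the prior shock probability, so it also cancels. Then $\Delta_i = D^2\,(C_i([1,1])-C_i([0,0]))$, and I set $g_i := C_i([1,1])-C_i([0,0])$.

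The main obstacle is Step 3. I need to verify carefully that the cross term involves only first moments. Writing the profit as $(a_A + D\mathbf{1}\{\theta=A\} - b_A Q)q_i - c_i q_i^2$ and expanding, the $a_A D$ terms are $a_A\bigl[\beta_i - b_A(\alpha_i\sum_k\beta_k + \beta_i\sum_k\alpha_k)-2c_i\alpha_i\beta_i\bigr] + D\,\alpha_i\mathbf{1}\{\theta=A\}$. Each piece is linear in the $\beta$'s, so its expectation depends only on $\mathbb{E}[\beta_k]$ and $\Pr(\theta=A)=\omega$. It remains to show that $\mathbb{E}[\beta_k]$ is invariant. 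Taking expectations of firm $k$'s first-order condition, $\mathbb{E}[\Pr(\theta=A\mid z_k)]-b_A(2\mathbb{E}\beta_k+\mathbb{E}\beta_{-k})-2c_k\mathbb{E}\beta_k=0$, where $\mathbb{E}[\mathbb{E}[\beta_{-k}\mid z_k]]=\mathbb{E}\beta_{-k}$. The first term equals $\omega$, so the system is the same for both configurations, which gives invariance. If this holds, the rest is bookkeeping, and the same argument simultaneously recovers the homogeneous-case claim.
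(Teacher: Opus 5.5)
The missing step is the sentence ``I assume interior solutions throughout.'' That step, not Step 3, is the main obstacle, because interiority of the no-platform equilibrium is exactly where the hypotheses $p_1=p_2$ and $D>0$ do their work. Nothing else in your argument uses them. Both the decomposition $q_k=a_A\alpha_k+D\beta_k(z_k)$ with a type-independent $\alpha_k$ and your averaging step need every type's first-order condition to hold with equality. At a corner $q=0$ the condition is only an inequality. Then $\mathbb{E}\beta_k=\omega\alpha_k$ no longer follows, the $a_AD$ term does not cancel, and which regime is active depends on $D/a_A$. This does occur in the model: for $p_i>p_j$ the paper's no-platform equilibrium switches to the boundary candidate $\mathbf q^C$ with $q_i^-=0$ once $D/a_A\ge\phi_a/\phi_D$ (see \eqref{eq:complete-equilibrium}). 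In that regime the numerator of the gain is a full quadratic $\alpha_1a_A^2+\alpha_2a_AD+\alpha_3D^2$. As written, your argument would equally ``prove'' the claim for $p_1\neq p_2$, which shows the omission is substantive.

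To close the gap, prove what the paper's Lemmas \ref{lem:positive_signal_quantities} and \ref{lem:symmetric_information} establish: for $p_1=p_2=p$ and all $a_A,D>0$, all four no-platform quantities are strictly positive.
\begin{itemize}
\item Nonnegativity of the rival's output in the best responses gives $q_k^-\le\hat a^-/\kappa_k$ and $q_k^+\le\hat a^+/\kappa_k$, with $\kappa_k=2(b_A+c_k)>2b_A$.
\item Firm $i$'s marginal payoff at $q_i^+=0$ then exceeds $\hat a^+(1-b_A/\kappa_j)>0$.
\item At $q_i^-=0$ the marginal payoff is at least $\hat a^- -\frac{b_A}{\kappa_j}\bigl(\hat a^-+\mu p(1-\mu)D\bigr)>0$, since $\mu p(1-\mu)D<\mu D<\hat a^-$. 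The common intercept $\hat a^-$ shared by both firms is where equal information power enters.
\end{itemize}
Also record that the interior linear system is nonsingular, so your linear decomposition is well defined. The platform game is trivially interior.

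With this added, your Step 3 is correct and takes a genuinely different route from the paper. The paper obtains $f_i^I=\alpha_0D^2$ by symbolic expansion, together with Lemma \ref{lem:denominator_sign}. Your observation is that the averaged first-order conditions reproduce the deterministic Cournot system with intercept $\omega$ under any information structure. This explains why the $a_A^2$ and $a_AD$ terms cancel, and it needs no denominator analysis. The paper's computation, by contrast, yields the explicit coefficient that it reuses in Propositions \ref{prop:homogeneousregions} and \ref{prop:cost_bottleneck}.
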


Following this observation, we examine participation incentives as a function of other supply chain parameters.
In the rest of this section, without loss of generality, we will assume that Firm $1$ is the cost-efficient firm, i.e., $c_1 < c_2$. 
When firms differ only in cost, the more cost-efficient firm benefits more from shared information because it can increase its production quantity at a lower cost and is therefore less affected by the competition.
The following proposition shows that Firm 2 is the bottleneck for platform formation. 

\begin{proposition}[Participation bottleneck for cost asymmetric firms]\label{prop:cost_bottleneck}
Suppose that Firm $1$ is more cost-efficient than Firm $2$, with $c_1<c_2$.
Then, $\Delta_2\geq 0$ implies that  $\Delta_1>0$.
That is, whenever the cost-inefficient firm weakly benefits from
participating in the information-sharing platform, the cost-efficient
firm strictly benefits from participation.
\end{proposition}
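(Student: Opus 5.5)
The plan is to exploit the linear–quadratic structure of the two-firm, single-market game. This lets us reduce each $\Delta_i$ to an explicit rational function of $(b_A,c_1,c_2,p,\omega)$ times $D^2$, and then compare the two signs directly.

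\textbf{Step 1: equilibrium profit as a quadratic form.} For any configuration $\vec V$ and any type $z_i$, Firm $i$'s first-order condition is
\begin{equation*}
\mathbb{E}\big[a_A+D\mathbf 1\{\theta=A\}-b_A(q_i+q_j)\mid z_i,\vec V\big]-(b_A+2c_i)q_i=0 .
\end{equation*}
Substituting it back into the conditional expected profit gives $\mathbb{E}[\pi_i\mid z_i,\vec V]=(b_A+c_i)\,q_i^*(z_i;\vec V)^2$. Hence
\begin{equation*}
\Pi_i(\vec V)=(b_A+c_i)\,\mathbb{E}\big[q_i^*(z_i;\vec V)^2\big].
\end{equation*}

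\textbf{Step 2: split each quantity into a baseline part and a shock part.} Write $q_i^*=\bar q_i+D\,s_i(z_i;\vec V)$. Here $\bar q_i$ is the linear function of $a_A$ solving the ex-ante (mean) Cournot system, using the quantities in Appendix~\ref{App:Equilibrium}, and $s_i$ is a type-dependent response. The key observations are:
\begin{itemize}
\item the mean of $q_i^*$ depends only on the prior, so $\mathbb{E}[q_i^*]$ is the same under $[0,0]$ and $[1,1]$;
\item the cross term therefore cancels in $\Delta_i$.
\end{itemize}
This yields
\begin{equation*}
\Delta_i=(b_A+c_i)\,D^2\big(\operatorname{Var}_{[1,1]}(s_i)-\operatorname{Var}_{[0,0]}(s_i)\big),
\end{equation*}
which is also the mechanism behind Proposition~\ref{cor:positive_D}. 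The sign of $\Delta_i$ is thus the sign of $h_i:=\operatorname{Var}_{[1,1]}(s_i)-\operatorname{Var}_{[0,0]}(s_i)$. This is a rational function of $p,\omega,b_A,c_i,c_j$ with a positive common denominator. I would solve the $[0,0]$ system, with types $(+)$ and $(-)$ for each firm, and the $[1,1]$ system, with a common type, in closed form. I would then clear denominators to obtain polynomials $H(c_i,c_j)$ with $\operatorname{sign}\Delta_i=\operatorname{sign}H(c_i,c_j)$. Symmetry of the model gives $\Delta_1\leftrightarrow H(c_1,c_2)$ and $\Delta_2\leftrightarrow H(c_2,c_1)$.

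\textbf{Step 3: the comparison (main obstacle).} We must show that $H(c_2,c_1)\ge 0$ implies $H(c_1,c_2)>0$ when $c_1<c_2$. Pointwise monotonicity in own cost will not work: Proposition~\ref{prop:homogeneousregions}(i) shows the own-cost effect is non-monotone along the diagonal. My first attempt is instead to find a positive multiplier $\kappa(c_1,c_2,p,\omega,b_A)$ such that
\begin{equation*}
H(c_1,c_2)-\kappa\,H(c_2,c_1)=(c_2-c_1)\,R(c_1,c_2)
\end{equation*}
with $R>0$. The natural candidate is the ratio of the leading coefficients in the variable that encodes the rival's informational response. The hope is that, after the factor $(c_2-c_1)$ is removed (it must divide the difference, since the two expressions coincide at $c_1=c_2$), the remaining polynomial $R$ has only nonnegative coefficients in $(b_A,c_1,c_2,p,1-p,\omega,1-\omega)$. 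If that fails, I will fall back on normalizing by $c_1$, setting $c_2=tc_1$ with $t>1$, and showing that the zero set of $H(c_2,c_1)$, viewed as a curve in $\omega$ or $p$, lies strictly inside the region $\{H(c_1,c_2)>0\}$. I would do this by comparing the threshold $\bar\omega$-type expressions, as in condition (iii), for the two firms. The economic intuition guiding the sign is that the efficient firm's response $s_1$ to shared information is amplified by its lower marginal cost, so it gains more variance from the common signal.
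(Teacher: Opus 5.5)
This is essentially the paper's proof. The paper removes the common positive denominator (Lemma~\ref{lem:denominator_sign}) and the factors $D^2p\omega(1-p)(1-\omega)^2$ and $(b_A+c_i)/c_1$, which is exactly the structure your Steps~1--2 derive (valid here because Lemma~\ref{lem:symmetric_information} makes the $[0,0]$ equilibrium interior); it then writes $G_1-G_2=(z-1)H$ with $z=c_2/c_1$, $y=b_A/c_1$, and certifies $H>0$ by a computer-algebra expansion with positive coefficients in $(z-1),y,p,1-p,\omega,1-\omega$, so the multiplier you are searching for is simply $\kappa=1$ on your $h_i$, and the nonnegative-coefficient certificate should be sought in the shifted cost variable $c_2-c_1$ rather than in $c_2$ itself.
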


Because the cost-inefficient firm is the bottleneck, conditions that support its participation also support platform formation. We therefore characterize sufficient conditions for each firm's participation in the $(p,\omega)$ space and compare the corresponding regions for the cost-efficient and cost-inefficient firms in Figures \ref{fig:asymmetric_cost_firm1} and \ref{fig:asymmetric_cost_firm2}, respectively. The derivations are provided in Appendix \ref{App:inefficientplot}.
\begin{figure}[h]
    \centering

    \begin{subfigure}[t]{0.4\textwidth}
        \centering
        \includegraphics[
            height=6cm,
            keepaspectratio
        ]{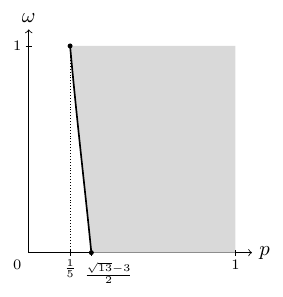}
        \caption{Cost-efficient Firm 1.}
        \label{fig:asymmetric_cost_firm1}
    \end{subfigure}
    \hfill
    \begin{subfigure}[t]{0.55\textwidth}
        \centering
        \includegraphics[
            height=6cm,
            keepaspectratio
        ]{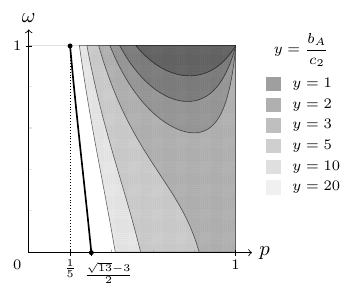}
        \caption{Cost-inefficient Firm 2 for different values of
        $\frac{b_A}{c_2}$.}
        \label{fig:asymmetric_cost_firm2}
    \end{subfigure}

    \caption{Sufficient regions for participation in the $(p,\omega)$ parameter space.
    The shaded regions indicate parameter combinations for which participation is guaranteed.}
    \label{fig:asymmetric_cost}
\end{figure}
We observe that the participation of the cost-inefficient firm changes monotonically with market sensitivity (Figure \ref{fig:asymmetric_cost_firm2}). As $\tfrac{b_A}{c_2}$ increases, the participation region expands, reducing the role of the cost asymmetry in the platform formation.
This is because, as market sensitivity increases relative to the cost-inefficient firm's production cost, producing more becomes less desirable for both firms primarily due to its impact on the market price (i.e., reducing the price at a higher rate).
This leaves less room for the shared information to actively influence the firms' quantities. 
Consequently, the strategic cost of sharing private information decreases. 
Moreover, information sharing removes uncertainty about the other firm's type, enabling firms to predict each other's production decisions more accurately. 
These effects collectively make information sharing more preferable for the cost-inefficient firm.

The effect of market sensitivity thus differs across the two settings:
higher market sensitivity encourages platform participation when firms
differ in cost, whereas lower market sensitivity encourages participation when firms are homogeneous.

\subsubsection*{Information Power Asymmetric Firms ($p_1 \neq p_2$ and $c_1 = c_2=c$):} We now analyze platform participation decisions under information power heterogeneity only. 
Without loss of generality, we assume Firm $1$ has higher information power, i.e., $p_1>p_2$. 
The following proposition shows that Firm 1 is the bottleneck and determines the platform formation.

\begin{proposition}[Participation bottleneck for information power asymmetric firms]\label{prop:infopower_bottleneck}
When firms differ only in information power and $p_1>p_2$, then $\Delta_1\geq 0$ implies that $\Delta_2>0$.
That is, whenever the high-information power firm weakly benefits from participating in the information-sharing platform, the low information power firm strictly benefits from participation.
\end{proposition}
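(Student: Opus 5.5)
I would prove the statement by making $\Delta_1$ and $\Delta_2$ explicit and showing that their difference has a definite sign. The starting point is the two-firm, single-market Bayesian Cournot equilibria of Appendix~\ref{App:Equilibrium}, specialized to $c_1=c_2=c$.

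\emph{Case $\vec V=[1,1]$.} Both firms share the type $(+,A)$ with probability $\omega p_{\text{platform}}$, where $p_{\text{platform}}=1-(1-p_1)(1-p_2)$, and otherwise share type $(-)$. This is a symmetric Cournot game under a common posterior. Hence $\Pi_1([1,1])=\Pi_2([1,1])$: with equal costs, the shared-information payoff is the same for both firms.

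\emph{Case $\vec V=[0,0]$.} Each firm $i$ has type $(+)$ with probability $\omega p_i$ and type $(-)$ otherwise. The first-order conditions give four linear equations in $q_i(+)$ and $q_i(-)$, $i\in\{1,2\}$, whose coefficients involve the posteriors $\Pr(\theta=A\mid z_i=(-))=\omega(1-p_i)/(1-\omega p_i)$ and $\Pr(z_k=(+)\mid z_i=(+))=p_k$. I would solve this system in closed form. With quadratic cost, each firm's ex ante equilibrium profit is $(b_A+c)\,\mathbb{E}[q_i^2]$, because the first-order condition makes revenue net of cost proportional to the squared quantity.

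This reduces the claim to a comparison of the no-platform payoffs. Since the platform payoffs are equal,
\[
\Delta_2-\Delta_1=\Pi_1([0,0])-\Pi_2([0,0]).
\]
It therefore suffices to show $\Pi_1([0,0])>\Pi_2([0,0])$ whenever $p_1>p_2$. Then $\Delta_1\ge 0$ gives $\Delta_2=\Delta_1+(\Pi_1([0,0])-\Pi_2([0,0]))>0$. Intuitively, without sharing, the firm with better detection earns more, because it responds to shocks more often while its rival is less likely to do the same.

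To prove the inequality, I would write $\Pi_i([0,0])$ as a function $F(p_i,p_k)$, where $p_k$ is the competitor's information power. The required statement is $F(p_1,p_2)>F(p_2,p_1)$ for $p_1>p_2$, so I would study $G(p_1,p_2)=F(p_1,p_2)-F(p_2,p_1)$. This $G$ is antisymmetric, so it vanishes on $p_1=p_2$. I would then factor $(p_1-p_2)$ out of its numerator and show that the remaining factor is positive on $[0,1]^2$, for all $\omega\in(0,1]$ and all $y=b_A/c>0$. As in Proposition~\ref{cor:positive_D}, I expect $a_A$ and $D$ to enter only through positive scale factors, possibly together with a term that is equal for both firms. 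If so, the sign depends only on $(p_1,p_2,\omega,y)$.

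The main obstacle is establishing the sign of that residual polynomial factor. My first attempt would be to expand it in $\omega$ and show that each coefficient is nonnegative in $p_1,p_2\in[0,1]$ and $y>0$. If that fails, I would fall back on a monotonicity argument: show $\partial F/\partial p_i>0$ (own detection helps) and $\partial F/\partial p_k<0$ (the rival's detection hurts). These imply $F(p_1,p_2)>F(p_2,p_2)>F(p_2,p_1)$. Each partial derivative should be simpler to sign than $G$, since it follows the structure of the best-response equations.

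A separate point to handle is possible interior-solution issues, for example negative quantities. I would assume these away exactly as the appendix does for the equilibrium quantities.
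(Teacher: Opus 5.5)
Your reduction is the paper's own. Equal costs give $\Pi_1([1,1])=\Pi_2([1,1])$, so $\Delta_2-\Delta_1=\Pi_1([0,0])-\Pi_2([0,0])$, and you then use $\Pi_i([0,0])=(b_A+c)\,\mathbb{E}[q_i^2]$. On the interior equilibrium your plan goes through as you predict. Averaging each firm's two first-order conditions gives $\mathbb{E}[q_1]=\mathbb{E}[q_2]$, which is your ``term equal for both firms''. The difference is then $(b_A+c)\big(\mathrm{Var}(q_1)-\mathrm{Var}(q_2)\big)$, which equals $(D^2/c)\,\omega(1-\omega)^2(1+y)(p_1-p_2)$ divided by a positive quantity. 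Because of the factor $(1-\omega)^2$, strictness holds only for $\omega\in(0,1)$, not at $\omega=1$.

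\textbf{The gap is in your last paragraph.} The appendix does not assume corner solutions away. It shows that when $p_1>p_2$, the equilibrium under $\vec V=[0,0]$ is the corner profile $\mathbf q^C$, with $q_1^-=0$, whenever $\phi_D>0$ and $D/a_A\ge\phi_a/\phi_D$.
\begin{itemize}
\item This region is nonempty, e.g.\ $p_1$ near $1$, $p_2$ small and $D/a_A$ large. Firm 1 without a signal is nearly sure there is no shock, while Firm 2 without a signal still prices in $\mu_2 D$.
\item It lies inside exactly the information-asymmetric case the proposition covers. There the solution of your unconstrained linear system is not the equilibrium, so the closed form you would factor is the wrong payoff.
\end{itemize}
Your factorization strategy also fails on this branch. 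Firm 1's no-signal condition holds only as an inequality, so $\mathbb{E}[q_1]\neq\mathbb{E}[q_2]$ and $a_A$ no longer drops out. The difference becomes $a_A^2/c$ times a positive factor times a quadratic in $x=D/a_A$, which is convex on the corner region and strictly negative at $x=0$. This corner-branch formula has no factor $(p_1-p_2)$: at $p_1=p_2$ it is strictly negative for every $x$. Its sign therefore depends on $x$, not on $(p_1,p_2,\omega,y)$ alone.

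The missing idea is to use the validity condition of the corner. The paper evaluates the quadratic at the threshold $x^*=\phi_a/\phi_D$, where it reduces to a positive multiple of $(p_1-p_2)$. Combining this with convexity and negativity at $x=0$, the unique positive root lies below $x^*$, so the difference is positive on the whole corner region $x\ge x^*$.

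Your monotonicity fallback does not avoid this. $F(p_2,p_2)$ is an interior point while $F(p_1,p_2)$ may be a corner point. You would need to sign $\partial F/\partial p_i$ and $\partial F/\partial p_k$ on the corner branch as well, and argue continuity across the regime switch.
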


Information power asymmetry makes the analysis much less tractable. 
We therefore adopt a numerical approach to identify when Firm $1$ participates and, by Proposition \ref{prop:infopower_bottleneck}, when the platform forms.
We explore the entire $(p_1,\tfrac{p_2}{p_1})$ parameter space for different values of $\tfrac{b_A}{c}$ and $\omega$. We focus on these parameters because participation decisions are particularly sensitive to them and vary little with the remaining parameters.
We illustrate selected results in Figure \ref{Fig:asymmetric_information} for $\omega=0.2$.  
In each plot, the shaded region represents where the platform forms.

\begin{figure}[h]
    \centering
    \begin{subfigure}{0.3\textwidth}
        \centering
        \includegraphics[width=\textwidth]{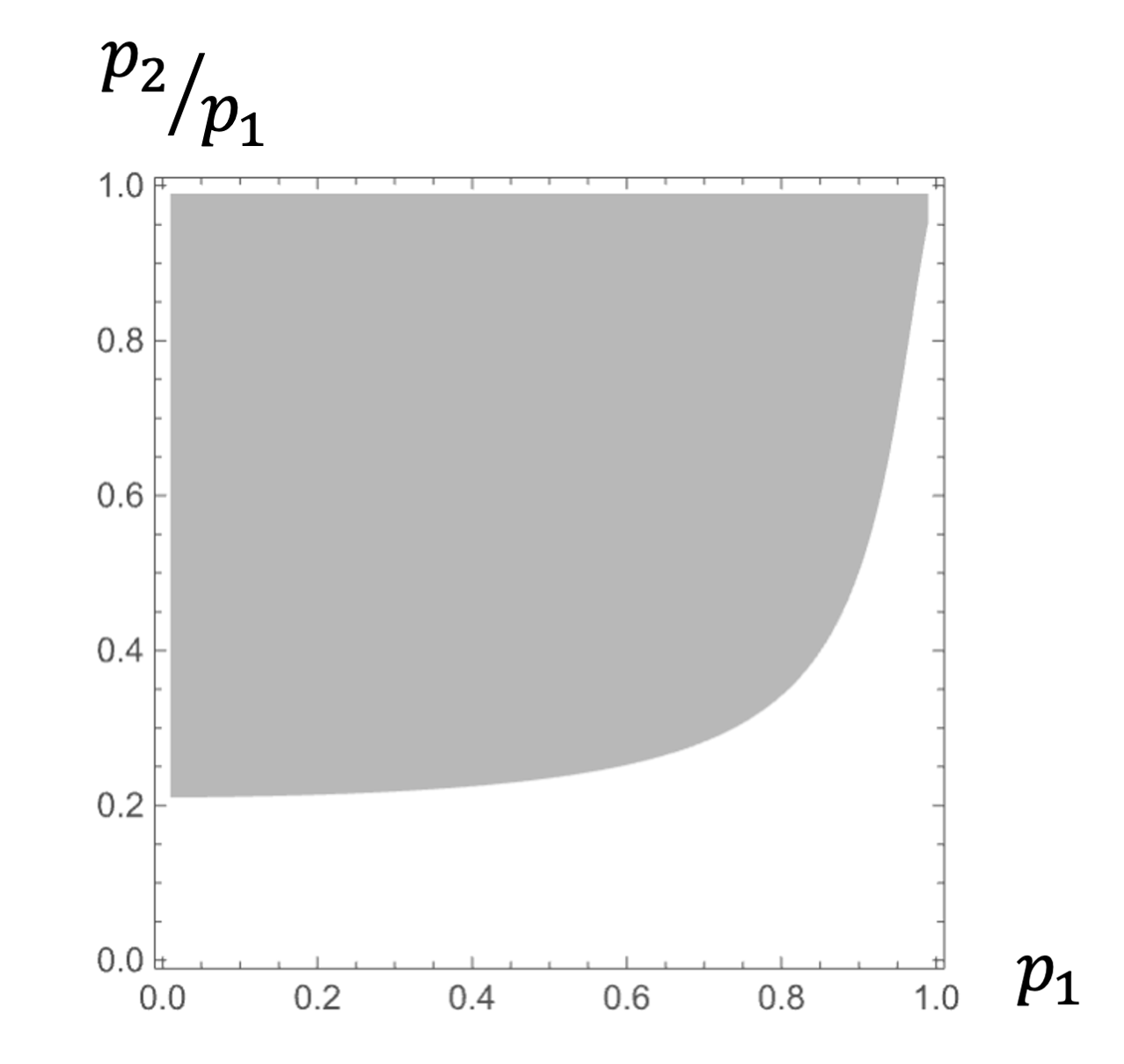}
        \caption{$b_A/c=0.25$}
        \label{Fig:asymmetric_information_025}
    \end{subfigure}
    \hfill
    \begin{subfigure}{0.3\textwidth}
        \centering
        \includegraphics[width=\textwidth]{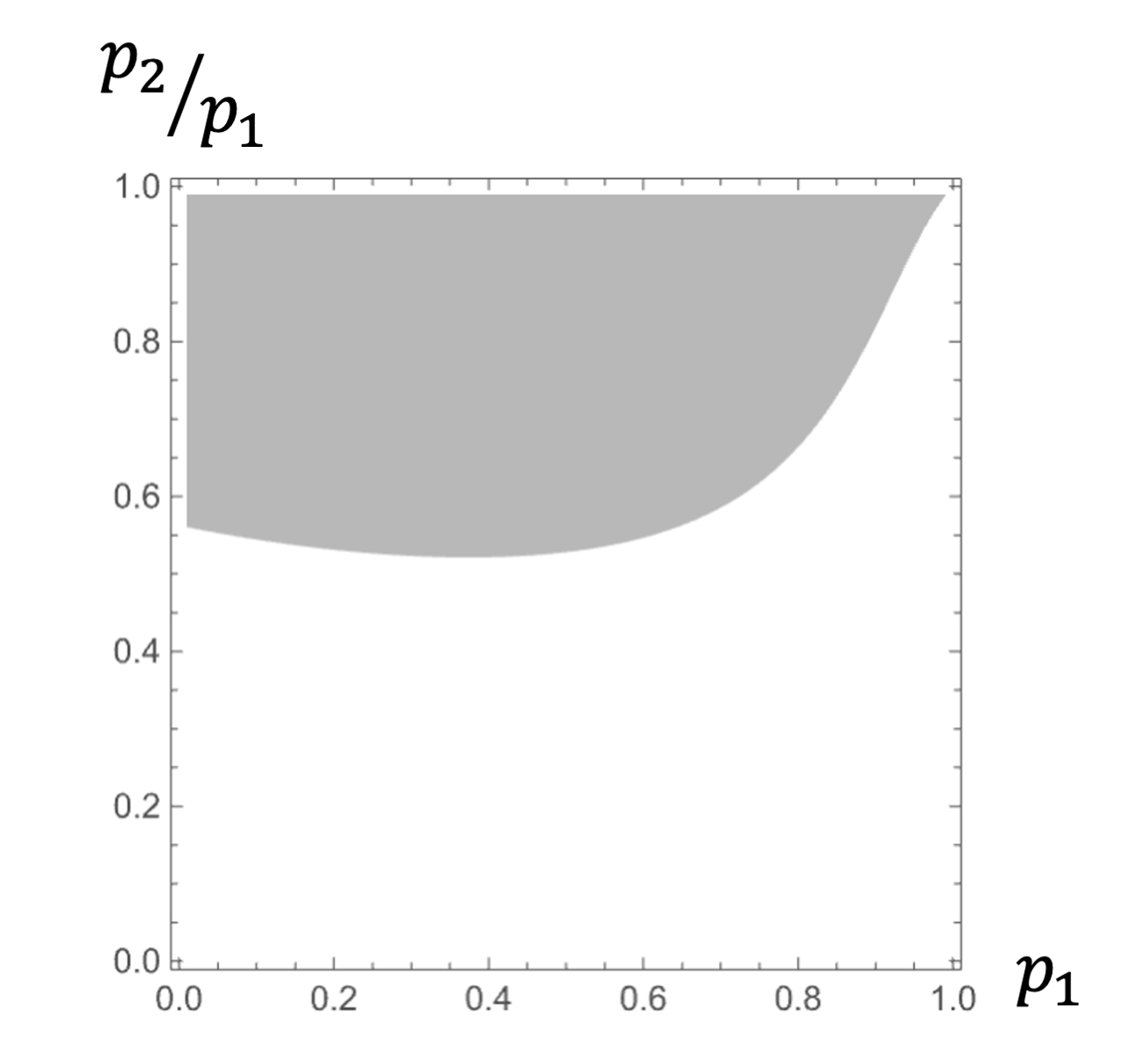}
        \caption{$b_A/c=1$}
        \label{Fig:asymmetric_information_1}
    \end{subfigure}
    \hfill
    \begin{subfigure}{0.3\textwidth}
        \centering
        \includegraphics[width=\textwidth]{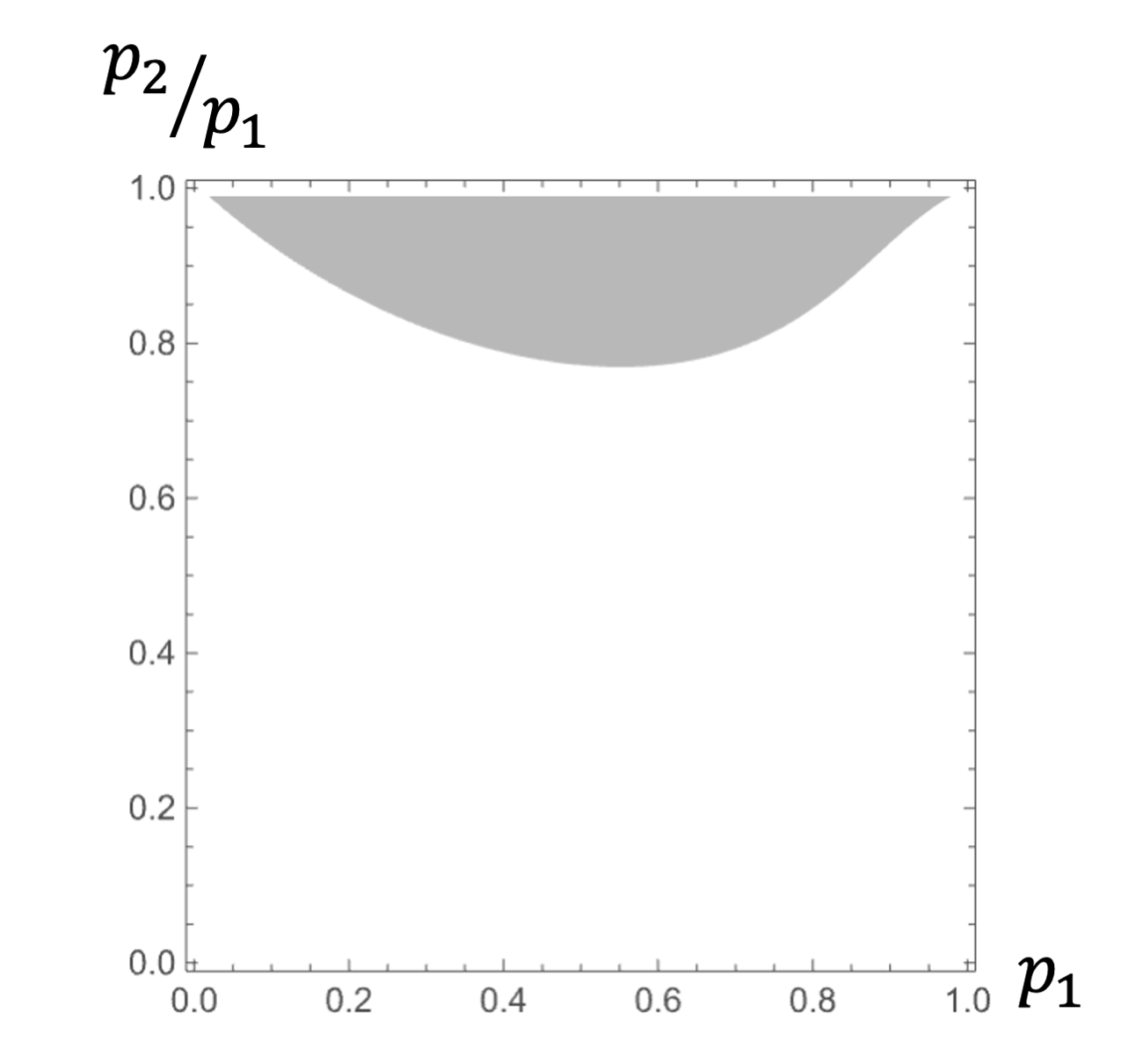}
        \caption{$b_A/c=5$}
        \label{Fig:asymmetric_information_5}
    \end{subfigure}

    \caption{Participation region from Firm 1's perspective in the 
    $(p_1,p_2/p_1)$ parameter space, $\omega=0.2$.}
    \label{Fig:asymmetric_information}
\end{figure}

The participation regions in Figure \ref{Fig:asymmetric_information} suggest that the high-information power firm (Firm 1) is more likely to participate as the information-power gap decreases (i.e., as $p_2$ increases). 
As market sensitivity decreases with respect to production cost, Firm 1 tolerates larger information gaps in its participation decision.

Overall, both forms of heterogeneity create different participation bottlenecks: cost-inefficient firms under cost asymmetry, high-information power firms under information power asymmetry. 
Thus, the effect of heterogeneity on participation depends on both the source and degree of the asymmetry.

\section{Numerical Study: Networked Competition Structure Effects on Participation}
\label{Sec:Numerical}

The two-firm analysis examines settings in which firms compete directly and explores how firm and market characteristics affect platform formation.
When more than two firms interact across multiple markets, firms are subject to both direct and indirect competition, and participation decisions depend on the broader network competition structure, which is determined by the topology of the network connecting firms to markets.
Thus, in this section, we conduct a numerical study of supply chains comprising three firms and four markets, varying the network topology in addition to the other factors captured in Section \ref{Sec:Analysis}.  Our goal is to examine whether observations in the direct-competition environment extend to more complex supply chain network competition structures, and to identify how changes in supply chain competition density and firms’ exposure to local competition affect firms' participation decisions in the presence of varying information power and shock magnitude.

In the numerical study, we set the baseline market size to $a_j=a=1000$ for each Market $j$ and the shock probability to $\omega=0.1$. We assume a uniform distribution of shock locations across markets, with $r_j=r=0.25$ for all $j$.
Other parameter variants are created by modifying this baseline setting and are described in the subsequent sections. 

\subsection{\emph{
Role of Competition Network on Platform Formation
}}\label{Sec:network_effect}

We first examine how network competition structure affects platform formation under different information-power and shock conditions. 
To do so, we use five networks with distinct topologies, as shown in Figure \ref{Fig:networks}, that differ in (i) \textit{overall intensity of competition}, determined by the network density, and (ii) \textit{firms' local competition exposure}, determined by the allocation of firm-market pairs.

\begin{figure}[h]
\centering
\includegraphics[width=0.8\textwidth]{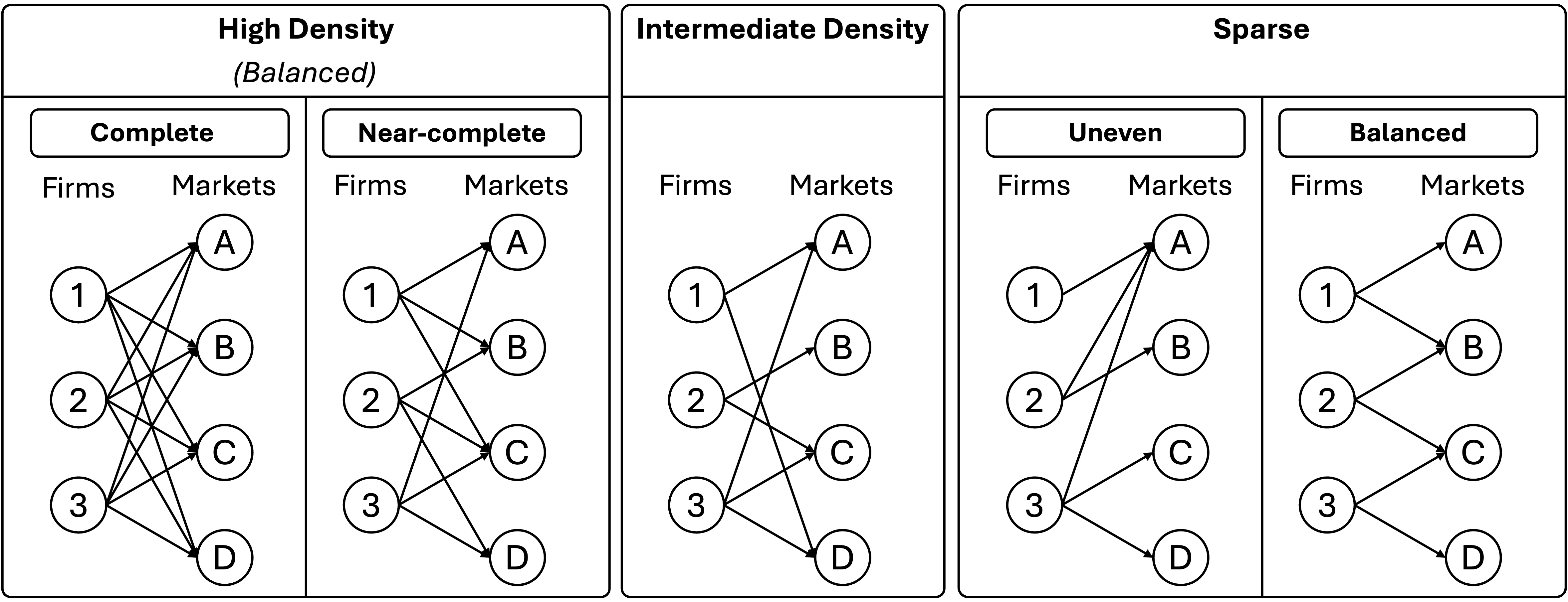}
\caption{Supply chain network topologies representing different competition structures.}
\label{Fig:networks}
\end{figure}
To isolate the effects of network topology from firm and market heterogeneity, for each firm, we fix production costs at $c_i=c=1$ and set market sensitivity at $b_j=b=1$ for each market. 
For firms, we consider three levels of information power: high ($p=0.75$), medium ($p=0.5$), and low ($p=0.25$). We examine all combinations corresponding to two scenarios: (a) asymmetric information power, where one firm has higher information power than the other two homogeneous firms and (b) asymmetric information power across firms.
For disruptions, we vary the normalized shock size, $\frac{D}{a}$, across three levels: low ($\frac{D}{a}=0.25$), medium ($\frac{D}{a}=1$), and high ($\frac{D}{a}=2$). We then employ a full-factorial experimental design across all parameter combinations.

\noindent \textbf{Firms with Asymmetric Information Power:}
 Earlier in Section \ref{Sec:Analysis}, we observed that when firms differ only in information power, the one with high information power has less incentive to participate because it gives up relatively more of its informational advantage while receiving less in return. 
We observe a similar pattern across network topologies, with participation reluctance strongest in the complete network, where firms face the greatest overall competitive exposure.
In Table~\ref{tab:network_total_density_effect}, we see that the firm with higher information (i.e., Firm 1) never participates in the platform when the network is fully connected, but does so when the supply network becomes less connected. 
Relative to the complete network, reducing total network density, while keeping the distribution of local competition balanced, results in full participation in several instances. 
This is because, as the supply chain network becomes less connected, the better-informed firm is less negatively affected by the increase in competitors’ information advantage resulting from the information it shares.

\begin{table}[h]
\centering
\caption{Equilibrium platform profiles under different network densities.} 
\label{tab:network_total_density_effect}

\begin{threeparttable}

\renewcommand{\arraystretch}{1.15}
\setlength{\tabcolsep}{11pt}

\resizebox{0.7\textwidth}{!}{%
\begin{tabular}{c|ccc||ccc}
\toprule
\textbf{Information power}
& \multicolumn{3}{c||}{\textbf{Complete Network}}
& \multicolumn{3}{c}{\textbf{Near-compelete Network}}
\\

\cmidrule(lr){2-4}
\cmidrule(lr){5-7}

$(p_1,p_2,p_3)$ &
\multicolumn{3}{c||}{Shock Magnitude} & \multicolumn{3}{c}{Shock Magnitude} \\
& \textbf{Low}
& \textbf{Medium}
& \textbf{High}
& \textbf{Low}
& \textbf{Medium}
& \textbf{High}
\\

\midrule

$(0.75,0.50,0.50)$
& S & S & S
& S & A & A
\\

\midrule

$(0.50,0.25,0.25)$
& -- & S & S
& S & S & S
\\

\midrule

$(0.75,0.25,0.25)$
& S & S & S
& S & S & S
\\

\bottomrule

\end{tabular}
}

\begin{tablenotes}[flushleft]
\footnotesize
\item \textit{Notes:}
A = all firms join;
S = only information-symmetric firms join;
-- = no platform equilibrium observed.
\end{tablenotes}

\end{threeparttable}
\end{table}

Participation incentives also depend on how competitive exposure is distributed among firms. Table \ref{tab:network_sparse_density} compares two sparse network instances with the same total density but different distributions of local competition. 
The high-information power firm is reluctant to participate when it faces greater local competition, but participates more often when it faces less local competition. For example, in the uneven sparse network, when Firm 1 has high information power, it does not participate under medium and high shocks because other firms also serve the only market Firm 1 serves, creating direct competition in that market. 
Conversely, in the balanced sparse network, the platform attracts more firms compared to the uneven network across all parameter settings.

\begin{table}[h]
\centering
\caption{Equilibrium platform profiles under different local competitive exposure.}
\label{tab:network_sparse_density}

\begin{threeparttable}

\renewcommand{\arraystretch}{1.15}
\setlength{\tabcolsep}{10pt}

\resizebox{0.7\textwidth}{!}{%
\begin{tabular}{c|ccc||ccc}
\toprule
\textbf{Information power}
& \multicolumn{3}{c||}{\textbf{Sparse--Uneven}}
& \multicolumn{3}{c}{\textbf{Sparse--Balanced}}
\\

\cmidrule(lr){2-4}
\cmidrule(lr){5-7}

$(p_1,p_2,p_3)$ &
\multicolumn{3}{c||}{Shock Magnitude} & \multicolumn{3}{c}{Shock Magnitude} \\
& \textbf{Low} & \textbf{Medium} & \textbf{High} & \textbf{Low} & \textbf{Medium} & \textbf{High} \\

\midrule

$(0.75,0.50,0.50)$ & A & S & S & A & A & A \\
$(0.50,0.75,0.50)$ & A & A & A & A & A & A \\
$(0.50,0.50,0.75)$ & A & A & A & A & A & A \\

\midrule

$(0.50,0.25,0.25)$ & A & S & S & A & A & A \\

$(0.25,0.50,0.25)$ & A & A & A & A & A & A \\

$(0.25,0.25,0.50)$ & A & A & A & A & A & A \\

\midrule

$(0.75,0.25,0.25)$ & S & S & S & S & S & A \\

$(0.25,0.75,0.25)$ & S & -- & -- & S & S & S \\

$(0.25,0.25,0.75)$ & S & S & S & S &S & A \\

\bottomrule

\end{tabular}
}

\begin{tablenotes}[flushleft]
\footnotesize
\item \textit{Notes:}
A = all firms join;
S = only information-symmetric firms join;
-- = no platform equilibrium observed;
\end{tablenotes}
\end{threeparttable}
\end{table}

\noindent \textbf{Firms with Symmetric Information Power:}
When firms have equal information power, full participation is substantially more common than under asymmetric information power, and participation is largely insensitive to network topology. 
Across all settings with symmetric information power, full participation occurs except in one case: the complete network under the smallest shock and lowest information power ($\frac{D}{a}=0.25$ and $p_i=p=0.25$). 
With symmetric information power, neither firm has an informational advantage that deters participation, while information sharing allows both firms to reduce uncertainty about their competitors' information and decisions. 
Because full participation occurs in all but one case, we do not report these results in a separate table.

\subsection{\emph{
System Parameter Effects on Platform Formation Across Networks
}}
\label{Sec:production_cost_effect}

Next, we investigate how changes in firms' production costs and market sensitivity affect equilibrium platform participation across varying network structures and shock magnitude, as these parameters are the most influential in the platform characteristics.
To do so, we compare participation decisions in the \emph{`Complete'} network, representing a high level of competition, with those in the \emph{`Intermediate-density'} network, representing a lower level of competition. We vary the normalized shock magnitude to represent different levels of disruption intensity: low (0.5), medium (1), high (1.5).
We set $c_i=1$ and $b=1$ as their respective base levels and vary one parameter at a time while holding the other at its base level. 
For the selected parameter, we consider progressively lower and higher values from the set $\{0.05, 0.1, 0.25, 0.5, 1.5, 2\}$.

\begin{table}[h]
\centering
\caption{Effect of production cost changes on equilibrium platform participation.}
\label{tab:cost_change_participation}

\begin{threeparttable}

\renewcommand{\arraystretch}{1.15}
\setlength{\tabcolsep}{9pt}

\resizebox{0.99\textwidth}{!}{%
\begin{tabular}{ccc|ccc||ccc}
\toprule

& & &
\multicolumn{6}{c}{\textbf{Cost change for the considered firm}}
\\

\cmidrule(lr){4-9}
\textbf{Considered Firm} &
\textbf{Network}
& \textbf{Information power}
& \multicolumn{3}{c||}{\textbf{Increasing cost} }
& \multicolumn{3}{c}{\textbf{Decreasing cost} }
\\

\cmidrule(lr){4-6}
\cmidrule(lr){7-9}

&& $(p_1,p_2,p_3)$ 
&\multicolumn{3}{c||}{Shock Magnitude $\frac{D}{a}$} & \multicolumn{3}{c}{Shock Magnitude $\frac{D}{a}$}
\\

&&  
& \textbf{0.5}
& \textbf{1}
& \textbf{1.5}
& \textbf{0.5}
& \textbf{1}
& \textbf{1.5}
\\

\midrule
\multirow{2}{*}{\textbf{Firm 1}} &
\multirow{2}{*}{\textbf{Complete}} & $(0.25,0.25,0.25)$ & A & A & A & $A/ -^{\dagger}$ & $A/-^{\dagger}$ & A \\

&& $(0.75,0.25,0.25)$ & -- & C & C & C & C & C \\

\midrule
\multirow{2}{*}{\textbf{Firm 2}} &
\multirow{2}{*}{\textbf{Intermediate}} & $(0.25,0.25,0.25)$ & A & A & A & A & A & A \\

&& $(0.25,0.75,0.25)$ & C & C & C & C & C & C \\

\midrule
\multirow{2}{*}{\textbf{Firm 3}} &
\multirow{2}{*}{\textbf{Intermediate}} & $(0.25,0.25,0.25)$ & A & A & A & A & A & A \\

&& $(0.25,0.25,0.75)$ & -- & -- & C & C & C & C \\

\bottomrule

\end{tabular}%
}

\end{threeparttable}

\vspace{2pt}

\parbox{\textwidth}{%
\footnotesize
\textit{Notes:}
The selected firm is the firm whose production cost is varied in the numerical studies; 
Starting from base level $c_i=1$, selected firm's cost increased to $\{1.5,2\}$ or decreased to $\{0.5,0.25,0.1,0.05\}$;
A = all firms join;
C = only cost-symmetric firms join;
-- = no platform equilibrium is observed;
$A/-^{\dagger}$ = initially all firms participate and when the production cost is sufficiently low, the platform no longer forms. 
}

\end{table}

\noindent{\textbf{Production Cost Effect:}}
In the production-cost analysis, we change the cost of one firm at a time. This firm is marked as `Considered Firm' in Table \ref{tab:cost_change_participation} (Firm 1 in \emph{Complete} and Firm 2 or Firm 3 in \emph{Intermediate-density}).
The table summarizes the equilibrium platform configurations as we change the selected firm's production cost. 
Platform configurations are categorized according to whether 
only the two cost-symmetric firms join (denoted by $C$ in the table) or whether all three firms join (denoted by $A$ in the table).

The effect of production cost asymmetry on platform participation depends on how the asymmetry is distributed. When one firm is sufficiently more cost-efficient than its competitors, the inefficient firms may be discouraged from sharing information with the cost-efficient firm and the platform does not form. 
On the contrary, if a single firm is cost-inefficient compared to its competitors, it chooses to participate in the platform to avoid missing out on the collective information. 
These observations suggest that when the cost-inefficient firms are the majority, they are 
reluctant to form a platform with a more efficient competitor.
However, when they are the minority, they
prefer to join when their more efficient competitors form a platform and collectively reach a critical mass of shared information.

When one firm has greater information power than its competitors and becomes more cost-efficient (i.e., decreasing cost), its competitors, which previously remained outside the platform, begin to participate and share information with one another, while the more efficient and better-informed firm remains outside the platform in equilibrium.
The reason is that the firm in consideration now has advantages in both information and production efficiency, making it a particularly strong competitor. In response, the other firms benefit from coordinating through the platform and sharing information with each other, thereby strengthening their competitive position against the advantaged firm.

 \noindent{\textbf{Market Sensitivity Effect:}} Table \ref{Tab:SensitivityEffect} shows the market sensitivity effect on firms' participation decisions across different supply chain networks and information-power gaps between firms. 
Table \ref{Tab:SensitivityEffect} shows how market sensitivity affects firms' participation decisions across different network structures and information-power configurations. Consistent with the analysis in Section \ref{Sec:Analysis}, decreasing market sensitivity generally increases participation across all considered supply chain networks. 
Market sensitivity has a stronger effect when information power is asymmetric. At the base or higher levels of market sensitivity, the information-disadvantaged firms often participate and share information with one another, while the information-advantaged firm remains outside the platform. 
This is an example where low-information firms join forces against the high-information firm to become more competitive.
As market sensitivity decreases, the competition becomes less fierce, and the equilibrium shifts toward full participation, with the information-advantaged firm also joining the platform. Thus, lower market sensitivity primarily promotes participation by firms that would otherwise remain outside the platform because of their information advantage.

\begin{table}[h]
\centering
\caption{Equilibrium participation configurations under market sensitivity effect.}
\label{tab:sensitivity_change_participation}

\resizebox{0.75\textwidth}{!}{%
\begin{tabular}{llcccc}
\hline
\textbf{Network} & \textbf{Information power}  & \textbf{Market sensitivity} & \multicolumn{3}{c}{\textbf{Shock magnitude }$\frac{D}{a}$} \\
\cline{4-6}
& $(p_1,p_2,p_3)$  & & 0.5 & 1 & 1.5 \\
\hline
\multirow{6}{*}{Complete} & \multirow{3}{*}{$(0.25,0.25,0.25)$} & $\uparrow$ & $A$ & $A$ & $A$ \\
& & base case ($b=1$) & $A$ & $A$ & $A$ \\
& & $\downarrow$ & $A$ & $A$ & $A$ \\
\cline{2-6}
& \multirow{3}{*}{$(0.75,0.25,0.25)$} & $\uparrow$ & $-$ & $S$ & $S$ \\
& & base case ($b=1$) & $S$ & $S$ & $S$ \\
& & $\downarrow$ & $S/A^\dagger$ & $S/A^\dagger$ & $A$ \\
\hline
\multirow{9}{*}{Intermediate} & \multirow{3}{*}{$(0.25,0.25,0.25)$} & $\uparrow$ & $A$ & $A$ & $A$ \\
& & base case ($b=1$) & $A$ & $A$ & $A$ \\
& & $\downarrow$ & $A$ & $A$ & $A$ \\
\cline{2-6}
& \multirow{3}{*}{$(0.25,0.75,0.25)$} & $\uparrow$ & $S$ & $S$ & $S$ \\
& & base case ($b=1$) & $S$ & $S$ & $S$ \\
& & $\downarrow$ & $A$ & $A$ & $A$ \\
\cline{2-6}
& \multirow{3}{*}{$(0.25,0.25,0.75)$} & $\uparrow$ & $-$ & $S$ & $S$ \\
& & base case ($b=1$) & $-$ & $S$ & $S$ \\
& & $\downarrow$ & $A$ & $A$ & $A$ \\
\hline
\end{tabular}
}

\vspace{2pt}

\begin{minipage}{0.95\textwidth}
\footnotesize
\textit{Notes:} $A$ = all firms join; $S$ = only information-symmetric firms join; 
$-$ = no platform equilibrium is observed. $S/A^\dagger$ indicates a transitional case: for a moderate decrease in market sensitivity, the equilibrium is ``information-symmetric firms join,'' whereas for a sufficiently large decrease, it shifts to ``all firms join.'' 
\end{minipage}
\label{Tab:SensitivityEffect}
\end{table}

\section{Summary}\label{Sec:Conc}

In this paper, we study the participation of horizontally competing supply chain firms in a centralized, multi-firm information-sharing platform to share  market shock signals that affect demand. 
We consider firms with different information capabilities and the participation problem prior to the realization of private signals, thereby creating a stochastic information-asymmetry environment.  The key question for each firm is whether the benefits of information sharing outweigh the loss of competitive advantage from sharing their private information. 
We develop a game-theoretic model to identify the supply chain characteristics that shape firms' incentives to participate in the platform.
We first analyze the equilibrium participation strategies when there are two competing firms, providing closed-form results for equilibrium participation behavior that highlight the effect of information power and production efficiency on firms' participation decisions. Numerical results, capturing more complex competition network structures, show that both network structure and the position of each firm in the network critically influence participation incentives.

Our findings reveal how supply chain characteristics impact participation in information sharing. 
First, the level of information asymmetry is a key factor in the equilibrium platform profile. Firms with similar information power may benefit from joining forces to reduce their competitive disadvantage against higher information power competitors.
Second, cost-inefficient firms may be less willing to participate because their competitors can exploit shared information, and uncertainty can serve as a competitive shield by restricting the actions of more efficient competitors. 
However, when several competitors join the platform, a cost-inefficient firm may also choose to participate to avoid an informational disadvantage.
Third, market sensitivity affects the competitive role of information and uncertainty. In highly sensitive markets, where firms have less flexibility to adjust their production quantities, information becomes more valuable. At the same time, uncertainty becomes more challenging for cost-inefficient firms. Consequently, greater market sensitivity increases the reluctance of high-information power firms to participate, while cost-inefficient firms are more willing to participate.
Lastly, we analyze how supply chain network topology shapes information sharing incentives among competing firms. 
Although competing firms are generally considered to be reluctant to share information, our results show that they may do so when information sharing creates an opportunity to redistribute their efforts between the markets they serve. Specifically, strategic information sharing may reduce the intensity in shared markets by enabling firms to focus on the markets where they have stronger positions.

There are a number of directions open for future research. First, this work assumes the signals being shared are truthful, and future research could explore mechanisms that penalize untruthful reporting.  Further, different disruption settings and types of information to share could be considered. In addition, future research could extend the single-period setting to multiple operational periods with a defined participation horizon that captures how updates in beliefs affect participation.
Another direction worth exploring is how to design platform mechanisms to increase participation.
For example, firms could be compensated to participate by creating reward allocation schemes funded by participation fees collected from firms benefiting more from the platform. Finally, in this work, we assume that firms make their decisions based on financial gains alone, omitting privacy concerns, to isolate the impact of supply chain conditions on platform benefits, but future work could explore privacy-preserving methods to encourage information sharing in competing supply chains.
\vspace{-0.5cm}

\subsection*{Data Availability Statement}
The data supporting the findings of this study are available within the article.
\vspace{-0.5cm}

\subsection*{Declaration of Generative AI Use}

The authors used OpenAI ChatGPT for minor language refinement.

\bibliographystyle{apalike}
\spacingset{1}
\bibliography{reference}

@article{ZHONG2023,
title = {The incentives for information sharing in online retail platforms},
author = {Qinjia Zhong and Jianjun Wang and Zongbao Zou and Xiaofan Lai},
journal = {Transportation Research Part E: Logistics and Transportation Review},
volume = {172},
pages = {103050},
year = {2023},
issn = {1366-5545},
doi = {https://doi.org/10.1016/j.tre.2023.103050},
url = {https://www.sciencedirect.com/science/article/pii/S1366554523000388}}

@article{zha2023strategic,
  title={Strategic information sharing of online platforms as resellers or marketplaces},
  author={Zha, Yong and Li, Quan and Huang, Tingliang and Yu, Yugang},
  journal={Marketing Science},
  volume={42},
  number={4},
  pages={659--678},
  year={2023},
  publisher={INFORMS}
}

@article{Li2021online,
author = {Guo Li and Lin Tian and Hong Zheng},
title ={Information Sharing in an Online Marketplace with Co‐opetitive Sellers},
journal = {Production and Operations Management},
volume = {30},
number = {10},
pages = {3713-3734},
year = {2021},
doi = {10.1111/poms.13460},
URL = { 
    
        https://doi.org/10.1111/poms.13460

},
eprint = { 
    
        https://doi.org/10.1111/poms.13460
}
}

@article{zhang2006competition,
  title={Competition, cooperation, and information sharing in a two-echelon assembly system},
  author={Zhang, Fuqiang},
  journal={Manufacturing \& Service Operations Management},
  volume={8},
  number={3},
  pages={273--291},
  year={2006},
  publisher={INFORMS}
}

@article{shang2016information,
  title={Information sharing in a supply chain with a common retailer},
  author={Shang, Weixin and Ha, Albert Y and Tong, Shilu},
  journal={Management Science},
  volume={62},
  number={1},
  pages={245--263},
  year={2016},
  publisher={INFORMS}
}

@ARTICLE{Wang2020,
  author={Wang, Jian-Jun and Dong, Jingyi and Yue, Xiaohang and Zhong, Qinjia},
  journal={IEEE Transactions on Systems, Man, and Cybernetics: Systems}, 
  title={Information Sharing in a Supply Chain With a Coopetitive Contract Manufacturer}, 
  year={2020},
  volume={50},
  number={5},
  pages={1777-1793},
  doi={10.1109/TSMC.2017.2786867}}

@article{Hou28012026,
author = {Pengwen Hou and Zihan Liu and Hubert Pun and Jun Wang},
title = {Platform’s information sharing strategy in a supplier encroachment context with economies of scale},
journal = {IISE Transactions},
volume = {0},
number = {0},
pages = {1--16},
year = {2026},
publisher = {Taylor \& Francis},
doi = {10.1080/24725854.2026.2622597},


URL = { 
    
        https://doi.org/10.1080/24725854.2026.2622597
    
    

},
eprint = { 
    
        https://doi.org/10.1080/24725854.2026.2622597
    
    

}

}

@article{liu2021information,
  title={Information sharing on retail platforms},
  author={Liu, Zekun and Zhang, Dennis J and Zhang, Fuqiang},
  journal={Manufacturing \& Service Operations Management},
  volume={23},
  number={3},
  pages={606--619},
  year={2021},
  publisher={INFORMS}
}

@article{Eckstein19052015,
author = {Dominik Eckstein and Matthias Goellner and Constantin Blome and Michael Henke},
title = {The performance impact of supply chain agility and supply chain adaptability: the moderating effect of product complexity},
journal = {International Journal of Production Research},
volume = {53},
number = {10},
pages = {3028--3046},
year = {2015},
publisher = {Taylor \& Francis},
doi = {10.1080/00207543.2014.970707},
URL = {    
        https://doi.org/10.1080/00207543.2014.970707
},
eprint = {     
        https://doi.org/10.1080/00207543.2014.970707
}
}

@article{cachon2000supply,
  title={Supply chain inventory management and the value of shared information},
  author={Cachon, Gerard P and Fisher, Marshall},
  journal={Management Science},
  volume={46},
  number={8},
  pages={1032--1048},
  year={2000},
  publisher={Informs}
}

@article{VIVES198471,
title = {Duopoly information equilibrium: Cournot and bertrand},
journal = {Journal of Economic Theory},
volume = {34},
number = {1},
pages = {71-94},
year = {1984},
issn = {0022-0531},
doi = {https://doi.org/10.1016/0022-0531(84)90162-5},
url = {https://www.sciencedirect.com/science/article/pii/0022053184901625},
author = {Xavier Vives}
}

@article{li2002,
author = {Li, Lode},
title = {Information Sharing in a Supply Chain with Horizontal Competition},
journal = {Management Science},
volume = {48},
number = {9},
pages = {1196-1212},
year = {2002},
doi = {10.1287/mnsc.48.9.1196.177},

URL = { 
    
        https://doi.org/10.1287/mnsc.48.9.1196.177
    
    

},
eprint = { 
    
        https://doi.org/10.1287/mnsc.48.9.1196.177
    
    

}}

@article{zhang2002inference,
author = {Hongtao Zhang},
title ={VERTICAL INFORMATION EXCHANGE IN A SUPPLY CHAIN WITH DUOPOLY RETAILERS},
journal = {Production and Operations Management},
volume = {11},
number = {4},
pages = {531-546},
year = {2002},
doi = {10.1111/j.1937-5956.2002.tb00476.x},
URL = {     
        https://doi.org/10.1111/j.1937-5956.2002.tb00476.x
},
eprint = {    
        https://doi.org/10.1111/j.1937-5956.2002.tb00476.x

}
}

@article{bimpikis2019cournot,
  title={Cournot competition in networked markets},
  author={Bimpikis, Kostas and Ehsani, Shayan and Ilk{\i}l{\i}{\c{c}}, Rahmi},
  journal={Management Science},
  volume={65},
  number={6},
  pages={2467--2481},
  year={2019},
  publisher={INFORMS}
}

@article{choi2008value,
  title={Value of sharing production yield information in a serial supply chain},
  author={Choi, Hyun-cheol Paul and Blocher, James D and Gavirneni, Srinagesh},
  journal={Production and Operations Management},
  volume={17},
  number={6},
  pages={614--625},
  year={2008},
  publisher={SAGE Publications Sage CA: Los Angeles, CA}
}

@article{CHO201397,
title = {The value of information sharing in a supply chain with a seasonal demand process},
journal = {Computers \& Industrial Engineering},
volume = {65},
number = {1},
pages = {97-108},
year = {2013},
note = {Intelligent Manufacturing Systems},
issn = {0360-8352},
doi = {https://doi.org/10.1016/j.cie.2011.12.004},
url = {https://www.sciencedirect.com/science/article/pii/S0360835211003779},
author = {Dong Won Cho and Young Hae Lee}
}

@article{lee1997information,
  title={Information distortion in a supply chain: The bullwhip effect},
  author={Lee, Hau L and Padmanabhan, Venkata and Whang, Seungjin},
  journal={Management Science},
  volume={43},
  number={4},
  pages={546--558},
  year={1997},
  publisher={Informs}
}

@ARTICLE{linhuanglin1046059,
  author={Furen Lin and Shenghsiu Huang and Sheng-cheng Lin},
  journal={IEEE Transactions on Engineering Management}, 
  title={Effects of information sharing on supply chain performance in electronic commerce}, 
  year={2002},
  volume={49},
  number={3},
  pages={258-268},
  doi={10.1109/TEM.2002.803388}}

@article{ZHAO2002321,
title = {The impact of forecasting model selection on the value of information sharing in a supply chain},
journal = {European Journal of Operational Research},
volume = {142},
number = {2},
pages = {321-344},
year = {2002},
issn = {0377-2217},
doi = {https://doi.org/10.1016/S0377-2217(01)00300-9},
url = {https://www.sciencedirect.com/science/article/pii/S0377221701003009},
author = {Xiande Zhao and Jinxing Xie and Janny Leung}
}

@article{chen2000quantifying,
  title={Quantifying the bullwhip effect in a simple supply chain: The impact of forecasting, lead times, and information},
  author={Chen, Frank and Drezner, Zvi and Ryan, Jennifer K and Simchi-Levi, David},
  journal={Management Science},
  volume={46},
  number={3},
  pages={436--443},
  year={2000},
  publisher={INFORMS}
}

@article{wang2009markdown,
  title={Markdown money contracts for perishable goods with clearance pricing},
  author={Wang, Charles X and Webster, Scott},
  journal={European Journal of Operational Research},
  volume={196},
  number={3},
  pages={1113--1122},
  year={2009},
  publisher={Elsevier}
}

@article{cachon2011value,
  title={The value of fast fashion: Quick response, enhanced design, and strategic consumer behavior},
  author={Cachon, G{\'e}rard P and Swinney, Robert},
  journal={Management Science},
  volume={57},
  number={4},
  pages={778--795},
  year={2011},
  publisher={INFORMS}
}

@article{CAO2013425,
title = {Coordination of a supply chain with one manufacturer and multiple competing retailers under simultaneous demand and cost disruptions},
journal = {International Journal of Production Economics},
volume = {141},
number = {1},
pages = {425-433},
year = {2013},
note = {Meta-heuristics for manufacturing scheduling and logistics problems},
issn = {0925-5273},
doi = {https://doi.org/10.1016/j.ijpe.2012.09.009},
url = {https://www.sciencedirect.com/science/article/pii/S0925527312004057},
author = {Erbao Cao and Can Wan and Mingyong Lai}}

@article{wu2021anti,
  title={The anti-collusion dilemma: Information sharing of the supply chain under buyback contracts},
  author={Wu, Jiang and Zou, Liuxin and Gong, Yeming and Chen, Mingyang},
  journal={Transportation Research Part E: Logistics and Transportation Review},
  volume={152},
  pages={102413},
  year={2021},
  publisher={Elsevier}
}

@article{lee2000value,
  title={The value of information sharing in a two-level supply chain},
  author={Lee, Hau L and So, Kut C and Tang, Christopher S},
  journal={Management Science},
  volume={46},
  number={5},
  pages={626--643},
  year={2000},
  publisher={INFORMS}
}

@article{zhang2013coordination,
  title={Coordination of information sharing in a supply chain},
  author={Zhang, Juliang and Chen, Jian},
  journal={International Journal of Production Economics},
  volume={143},
  number={1},
  pages={178--187},
  year={2013},
  publisher={Elsevier}
}

@article{shen2019review,
  title={A review on supply chain contracting with information considerations: information updating and information asymmetry},
  author={Shen, Bin and Choi, Tsan-Ming and Minner, Stefan},
  journal={International Journal of Production Research},
  volume={57},
  number={15-16},
  pages={4898--4936},
  year={2019},
  publisher={Taylor \& Francis}
}

@article{RAWEEWAN2018269,
title = {Information sharing in supply chain collaboration},
journal = {Computers \& Industrial Engineering},
volume = {126},
pages = {269-281},
year = {2018},
issn = {0360-8352},
doi = {https://doi.org/10.1016/j.cie.2018.09.042},
url = {https://www.sciencedirect.com/science/article/pii/S0360835218304583},
author = {Morrakot Raweewan and William G. Ferrell}}

@article{shin2010firms,
  title={Do firms invest in forecasting efficiently? The effect of competition on demand forecast investments and supply chain coordination},
  author={Shin, Hyoduk and Tunca, Tunay I},
  journal={Operations Research},
  volume={58},
  number={6},
  pages={1592--1610},
  year={2010},
  publisher={INFORMS}
}

@article{inference,
author = {Cao, Erbao and Chen, Guangzhi},
title = {Information sharing motivated by production cost reduction in a supply chain with downstream competition},
journal = {Naval Research Logistics (NRL)},
volume = {68},
number = {7},
pages = {898-907},
doi = {https://doi.org/10.1002/nav.21977},
url = {https://onlinelibrary.wiley.com/doi/abs/10.1002/nav.21977},
year = {2021}}

@article{hawithcostreduc,
author = {Ha, Albert Y. and Tian, Quan and Tong, Shilu},
title = {Information Sharing in Competing Supply Chains with Production Cost Reduction},
journal = {Manufacturing \& Service Operations Management},
volume = {19},
number = {2},
pages = {246-262},
year = {2017},
doi = {10.1287/msom.2016.0607}}

@article{ha2008contracting,
  title={Contracting and information sharing under supply chain competition},
  author={Ha, Albert Y and Tong, Shilu},
  journal={Management Science},
  volume={54},
  number={4},
  pages={701--715},
  year={2008},
  publisher={Informs}
}

@article{ZHOU2017785,
title = {Supply chain coordination with information sharing: The informational advantage of GPOs},
journal = {European Journal of Operational Research},
volume = {256},
number = {3},
pages = {785-802},
year = {2017},
issn = {0377-2217},
doi = {https://doi.org/10.1016/j.ejor.2016.06.045},
url = {https://www.sciencedirect.com/science/article/pii/S0377221716304854},
author = {Maosen Zhou and Bin Dan and Songxuan Ma and Xumei Zhang}
}

@inproceedings{yuan2008incentive,
  title={Incentive Mechanism of Supply Chain Inventory Management Based on Information Sharing},
  author={Yuan, Qi and Qiong, Zhang},
  booktitle={2008 International Seminar on Business and Information Management},
  volume={2},
  pages={373--377},
  year={2008},
  organization={IEEE}
}

@incollection{li2002supply,
  author    = {Li, Lode and Zhang, Hongtao},
  editor    = {Song, Jing-Sheng and Yao, David D.},
  title     = {Supply Chain Information Sharing in a Competitive Environment},
  booktitle = {Supply Chain Structures: Coordination, Information and Optimization},
  year      = {2002},
  publisher = {Springer US},
  address   = {Boston, MA},
  pages     = {161--206},
  doi       = {10.1007/978-1-4757-6635-6_6}
}

@article{Hafirstpaper,
author = {Ha, Albert Y.},
title = {Supplier-buyer contracting: Asymmetric cost information and cutoff level policy for buyer participation},
journal = {Naval Research Logistics (NRL)},
volume = {48},
number = {1},
pages = {41-64},
doi = {https://doi.org/10.1002/1520-6750(200102)48:1<41::AID-NAV3>3.0.CO;2-M},
url = {https://onlinelibrary.wiley.com/doi/abs/10.1002/1520-6750%28200102%2948%3A1%3C41%3A%3AAID-NAV3%3E3.0.CO%3B2-M},
eprint = {https://onlinelibrary.wiley.com/doi/pdf/10.1002/1520-6750%28200102%2948%3A1%3C41%3A%3AAID-NAV3%3E3.0.CO%3B2-M},
year = {2001}
}

@article{MASSARI2021108150,
title = {Investigating the effect of horizontal coopetition on supply chain resilience in complex and turbulent environments},
journal = {International Journal of Production Economics},
volume = {237},
pages = {108150},
year = {2021},
issn = {0925-5273},
doi = {https://doi.org/10.1016/j.ijpe.2021.108150},
url = {https://www.sciencedirect.com/science/article/pii/S0925527321001262},
author = {Giovanni Francesco Massari and Ilaria Giannoccaro}
}

@article{fang2015managing,
  title={Managing supply uncertainty under supply chain Cournot competition},
  author={Fang, Yaner and Shou, Biying},
  journal={European Journal of Operational Research},
  volume={243},
  number={1},
  pages={156--176},
  year={2015},
  publisher={Elsevier}
}

@article{chen2019supply,
  title={Supply chain contracts that prevent information leakage},
  author={Chen, Yiwei and {\"O}zer, {\"O}zalp},
  journal={Management Science},
  volume={65},
  number={12},
  pages={5619--5650},
  year={2019},
  publisher={INFORMS}
}

@article{costantino2014impact,
  title={The impact of information sharing and inventory control coordination on supply chain performances},
  author={Costantino, Francesco and Di Gravio, Giulio and Shaban, Ahmed and Tronci, Massimo},
  journal={Computers \& Industrial Engineering},
  volume={76},
  pages={292--306},
  year={2014},
  publisher={Elsevier}
}

@article{huo2014effects,
  title={The effects of competitive environment on supply chain information sharing and performance: an empirical study in China},
  author={Huo, Baofeng and Zhao, Xiande and Zhou, Honggeng},
  journal={Production and Operations Management},
  volume={23},
  number={4},
  pages={552--569},
  year={2014},
  publisher={Wiley Online Library}
}

@article{gu2021impact,
  title={The impact of information technology usage on supply chain resilience and performance: An ambidexterous view},
  author={Gu, Minhao and Yang, Lu and Huo, Baofeng},
  journal={International Journal of Production Economics},
  volume={232},
  pages={107956},
  year={2021},
  publisher={Elsevier}
}

@article{li2017enhancement,
  title={Enhancement of supply chain resilience through inter-echelon information sharing},
  author={Li, Haobin and Pedrielli, Giulia and Lee, Loo Hay and Chew, Ek Peng},
  journal={Flexible Services and Manufacturing Journal},
  volume={29},
  pages={260--285},
  year={2017},
  publisher={Springer}
}

@ARTICLE{Ivanov_DSCM2021,
  author={Ivanov, Dmitry},
  journal={IEEE Transactions on Engineering Management}, 
  title={Digital Supply Chain Management and Technology to Enhance Resilience by Building and Using End-to-End Visibility During the COVID-19 Pandemic}, 
  year={2021},
  volume={},
  number={},
  pages={1-11},
  doi={10.1109/TEM.2021.3095193}}

@article{wong2021conceptual,
  title={A conceptual framework for information-leakage-resilience},
  author={Wong, Wai-Peng and Tan, Kim Hua and Govindan, Kannan and Li, Di and Kumar, Ajay},
  journal={Annals of Operations Research},
  pages={1--21},
  year={2021},
  publisher={Springer}
}

@article{gao2019disruption,
  title={Disruption risk mitigation in supply chains: The risk exposure index revisited},
  author={Gao, Sarah Yini and Simchi-Levi, David and Teo, Chung-Piaw and Yan, Zhenzhen},
  journal={Operations Research},
  volume={67},
  number={3},
  pages={831--852},
  year={2019},
  publisher={INFORMS}
}

@article{valuedisruptionduration,
author = {Mili Mehrotra and William Schmidt},
title ={The Value of Supply Chain Disruption Duration Information},

journal = {Production and Operations Management},
volume = {30},
number = {9},
pages = {3015-3035},
year = {2021},
doi = {10.1111/poms.13415},

URL = { 
    
        https://doi.org/10.1111/poms.13415
    
    

},
eprint = { 
    
        https://doi.org/10.1111/poms.13415
    
    

}
}

@article{SEREL2008115,
title = {Inventory and pricing decisions in a single-period problem involving risky supply},
journal = {International Journal of Production Economics},
volume = {116},
number = {1},
pages = {115-128},
year = {2008},
issn = {0925-5273},
doi = {https://doi.org/10.1016/j.ijpe.2008.07.012},
url = {https://www.sciencedirect.com/science/article/pii/S0925527308002363},
author = {Doğan A. Serel}
}

@article{Tomlin,
author = {Tomlin, Brian},
year = {2006},
month = {05},
pages = {639-657},
title = {On the Value of Mitigation and Contingency Strategies for Managing Supply Chain Disruption Risks},
volume = {52},
journal = {Management Science},
doi = {10.1287/mnsc.1060.0515}
}

@article{SixthTone2020,
  title = {From Shortage to Glut: China’s Mask Market Collapse},
  author = {{Sixth Tone}},
  year = {2020},
  url = {https://www.sixthtone.com/news/1005781},
  note = {Accessed: 2026-04-08}
}

@article{GUAN2020107450,
title = {Demand information sharing in competing supply chains with manufacturer-provided service},
journal = {International Journal of Production Economics},
volume = {220},
pages = {107450},
year = {2020},
issn = {0925-5273},
doi = {https://doi.org/10.1016/j.ijpe.2019.07.023},
url = {https://www.sciencedirect.com/science/article/pii/S0925527319302609},
author = {Zili Guan and Xumei Zhang and Maosen Zhou and Yiran Dan}}

@article{sahin2002flow,
  title={Flow coordination and information sharing in supply chains: review, implications, and directions for future research},
  author={Sahin, Funda and Robinson, E Powell},
  journal={Decision Sciences},
  volume={33},
  number={4},
  pages={505--536},
  year={2002},
  publisher={Wiley Online Library}
}

@article{zheng2019reactive,
  title={Reactive production with preprocessing restriction in supply chains with forecast updates},
  author={Zheng, Meimei and Shunko, Masha and Gavirneni, Nagesh and Shu, Yan and Wu, Kan},
  journal={IISE Transactions},
  volume={51},
  number={12},
  pages={1402--1436},
  year={2019},
  publisher={Taylor \& Francis}
}

@article{topan2018,
  title={Using imperfect advance demand information in lost-sales inventory systems with the option of returning inventory},
  author={Topan, Engin and Tan, Tarkan and van Houtum, Geert-Jan and Dekker, Rommert},
  journal={IISE Transactions},
  volume={50},
  number={3},
  pages={246--264},
  year={2018},
  publisher={Taylor \& Francis}
}

@article{Chen04052025,
author = {Zhen-Yu Chen and Minghe Sun},
title = {Data-driven contract design for supply chain coordination with algorithm sharing and algorithm competition},
journal = {IISE Transactions},
volume = {57},
number = {5},
pages = {573--589},
year = {2025},
publisher = {Taylor \& Francis},
doi = {10.1080/24725854.2024.2361460}}

@article{Wu01022022platform,
author = {Yuguang Wu and Qiao Chu He and Xin Wang},
title = {Competitive spatial pricing for urban parking systems: Network structures and asymmetric information},
journal = {IISE Transactions},
volume = {54},
number = {2},
pages = {186--197},
year = {2022},
publisher = {Taylor \& Francis},
doi = {10.1080/24725854.2021.1937755}}

@article{liu2026mechanism,
  title={Mechanism Design for Privacy-Preserving Information Sharing in Oligopoly Competition},
  author={Liu, Yuxin and Rahimian, M Amin},
  journal={arXiv preprint arXiv:2606.02348},
  year={2026}
}

\newpage
\appendix

\section{Prior and Posterior Beliefs}\label{Sec:Appx:probabilities}

This appendix presents probability expressions capturing firms' beliefs. Throughout this section, firms $i$ and $k$ denote platform members, while firms $u$ and $v$ denote non-members.

\subsection{\emph{Prior Probabilities}}

Parameter $\theta$ denotes the market state defined in Section~\ref{Sec:Supply_chain_setting}. A shock occurs with probability $\omega$, and conditional on a shock, the shock location is Market $j$ with probability $r_j$:
\begin{align}
\Pr(\theta=0) &= 1-\omega, &
\Pr(\theta=j) &= \omega r_j .\nonumber
\end{align}

Conditional on a shock at Market $j$, a non-member firm $u$ observes a signal with probability $p_u$:
\begin{align}
\Pr(z_u=(+,j)\mid \theta=j) &= p_u, &
\Pr(z_u=-\mid \theta=j) &= 1-p_u, &
\Pr(z_u=-\mid \theta=0)=1.\nonumber
\end{align}
For platform members, information sharing improves information availability
\begin{equation}
p_{\text{platform}}= 1-\prod_{i\in \mathbf{F}:V_i=1}(1-p_i). \nonumber
\end{equation}
Accordingly, for a platform member $i$,
\begin{align}
\Pr(z_i=(+,j)\mid \theta=j) &= p_{\text{platform}}, &
\Pr(z_i=-\mid \theta=j) &= 1-p_{\text{platform}}, \nonumber\\
\Pr(z_i=-\mid \theta=0) &= 1.\nonumber
\end{align}

Unconditional on a shock,
\begin{align}
\Pr(z_k=(+,j)) &= \omega r_j p_k, &
\Pr(z_i=(+,j)) &= \omega r_j p_{\text{platform}}. \nonumber
\end{align}

\subsection{\emph{Posterior beliefs over the market state}}
If a non-member firm $u$ does not observe a signal, its posterior belief over the market state is
\begin{align}
\Pr(\theta=j\mid z_u=-)
&=
\frac{(1-p_u)\omega r_j}{1-\omega p_u}, \nonumber\\
\Pr(\theta=0\mid z_u=-)
&=
\frac{1-\omega}{1-\omega p_u}.\nonumber
\end{align}

If a platform member $i$ does not observe a signal, then
\begin{align}
\Pr(\theta=j\mid z_i=-)
&=
\frac{(1-p_{\text{platform}})\omega r_j}{1-\omega p_{\text{platform}}}, \nonumber\\
\Pr(\theta=0\mid z_i=-)
&=
\frac{1-\omega}{1-\omega p_{\text{platform}}}.\nonumber
\end{align}

If a firm observes a shock signal at Market $j$, then it knows the market state with certainty:
\begin{equation}
\Pr(\theta=j\mid z_x=(+,j))=1,
\qquad x\in\{i,u\}.\nonumber
\end{equation}

\subsection{\emph{Posterior Beliefs over Other Firms' Types}}

Conditional on a shock at Market $j$, non-members’ signal realizations are independent.

\paragraph{Member beliefs.}
If platform member $i$ observes a signal at Market $j$, then for other member $k$, it has the following belief:
\begin{equation}
\Pr(z_k=(+,j)\mid z_i=(+,j))=1,\nonumber
\end{equation}
indicating information sharing within the platform. For a non-member $v$, the belief is
\begin{align}
\Pr(z_v=(+,j)\mid z_i=(+,j)) &= p_v, & 
\Pr(z_v=(+,j)\mid z_i=-)
&=
\frac{\omega r_j(1-p_{\text{platform}})p_v}{1-\omega p_{\text{platform}}}.\nonumber
\end{align}

\paragraph{Non-member beliefs.}
If non-member Firm $u$ observes a signal at Market $j$, then it knows $\theta=j$. In this case, for another non-member $v$ and a platform member $k$,
\begin{align}
\Pr(z_v=(+,j)\mid z_u=(+,j)) &= p_v,\nonumber\\
\Pr(z_k=(+,j)\mid z_u=(+,j)) &= p_{\text{platform}}.\nonumber
\end{align}

If Firm $u$ does not observe a signal, then
\begin{align}
\Pr(z_v=(+,j)\mid z_u=-)
&=
\frac{\omega r_j(1-p_u)p_v}{1-\omega p_u}, \\
\Pr(z_k=(+,j)\mid z_u=-)
&=
\frac{\omega r_j(1-p_u)p_{\text{platform}}}{1-\omega p_u}.
\end{align}

\section{Small Supply Chain: Firms' Equilibrium Quantity Decisions Analysis}

\label{App:Equilibrium}

In this section, we derive the firms' equilibrium production quantities in the two-firm supply chain setting. 

\subsection{\emph{Baseline: Equilibrium under Private Information}} 
\label{App:Equilibrium_base}

In the baseline case, firms operate without the platform, represented by 
$\vec{V} = [0,0]$. Therefore, demand signals remain private, and firms form beliefs about their competitor's information. We distinguish production decisions for each firm based on whether they observe the signal or not, and represent the quantity-game equilibrium as the vector
$$[q^*_1(-|[0,0]),q^*_1(+,A|[0,0]);q^*_2(-|[0,0]),q^*_2(+,A|[0,0])].$$ 

For compactness in this section, let 
$$
q^+_1 := q^*_{1A}(+,A\mid[0,0]), 
\quad 
q^+_2 := q^*_{2A}(+,A\mid[0,0]), 
\quad 
q^-_1:= q^*_{1A}(-\mid[0,0]), 
\quad 
q^-_2:= q^*_{2A}(-\mid[0,0]).
$$

The production quantities are nonnegative.

When Firm $i$ observes a signal, it chooses its production in response to expectation its competitor's production.
When Firm $i$ does not observe a signal, uncertainty remains both for market conditions and the competitor's information. The corresponding posterior beliefs are derived in Section \ref{Sec:Appx:probabilities}.

A Bayesian equilibrium is a vector $
(q^-_1,q^+_1,q^-_2,q^+_2)$
such that each quantity is optimal for the corresponding firm and information state, given the competitor's type-contingent production quantities. Hence, none of the firms has an incentive to deviate at the equilibrium.

\paragraph{Payoff Functions and Best Responses.}

We derive the equilibrium quantities in the baseline case.

When Firm $i$ observes the disruption, it chooses $q^+_{i}$ taking as given the quantities chosen by Firm $j$. Its expected payoff is
\begin{equation}    
{\small
\begin{aligned}
\mathbb{E}\!\left[\pi_i\!\left(q^+_{i},q^+_{j},q^-_{i},q^-_{j},\theta\right)
\mid z_i=(+,A),\vec{V}=[0,0]\right]
&=
(1-p_j)\left[a_A + D - b_A\big(q^+_{i} + q^-_{j}\big)\right] q^+_{i}
\\
&\quad+
p_j\left[a_A + D - b_A\big(q^+_{i} + q^+_{j}\big)\right] q^+_{i}
\\
&\quad - c_i \left(q^+_{i}\right)^2.
\end{aligned}
}
\end{equation}
\medskip

When Firm $i$ does not observe the disruption, it chooses $q^-_{i}$ based on its posterior beliefs. Its expected payoff is
\begin{equation}
{\small
\begin{aligned}
\mathbb{E}\!\left[\pi_i\!\left(q^+_{i},q^+_{j},q^-_{i},q^-_{j},\theta\right)
\mid z_i=-,\vec{V}=[0,0]\right]
&=
\frac{1-\omega}{1-\omega + \omega(1-p_i)}
\left[a_A - b_A\big(q^-_{i} + q^-_{j}\big)\right] q^-_{i}\\
&\quad+
\frac{\omega(1-p_i)(1-p_j)}
{1-\omega + \omega(1-p_i)}
\left[a_A + D - b_A\big(q^-_{i} + q^-_{j}\big)\right] q^-_{i}\\
&\quad+
\frac{\omega(1-p_i)p_j}
{1-\omega + \omega(1-p_i)}
\left[a_A + D - b_A\big(q^-_{i} + q^+_{j}\big)\right] q^-_{i}\\
&\quad - c_i \left(q^-_{i}\right)^2.
\end{aligned}
}
\end{equation}

\paragraph{First-Order and Second-Order Optimality Conditions}

Using the payoff functions above, Firm $i$'s first-order and second-order conditions are as follows.

\medskip

\noindent
\textit{When Firm $i$ observes the disruption:}

\begin{equation}\label{Eq:FOC1}
\begin{aligned}
\frac{\partial 
\mathbb{E}\!\left[\pi_i\!\left(q^+_{i},q^+_{j},q^-_{i},q^-_{j},\theta\right)
\mid z_i=(+,A),\vec{V}=[0,0]\right]}
{\partial q^+_{i}}
&=
a_A + D
- b_A\!\left(
p_j q^+_{j}
+ (1-p_j) q^-_{j}
\right) \\
&\quad - 2(b_A+c_i) q^+_{i},
\end{aligned}
\end{equation}
\begin{equation}
\begin{aligned}
\frac{\partial^2 
\mathbb{E}\!\left[\pi_i\!\left(q^+_{i},q^+_{j},q^-_{i},q^-_{j},\theta\right)
\mid z_i=(+,A),\vec{V}=[0,0]\right]}
{\partial \left(q^+_{i}\right)^2}
&=
-2(b_A+c_i)<0.
\end{aligned}
\end{equation}

\medskip

\noindent
\textit{When Firm $i$ does not observe the disruption:}

\begin{equation} \label{Eq:FOC2}
\begin{aligned}
\frac{\partial 
\mathbb{E}\!\left[\pi_i\!\left(q^+_{i},q^+_{j},q^-_{i},q^-_{j},\theta\right)
\mid z_i=-,\vec{V}=[0,0]\right]}
{\partial q^-_{i}}
&=
a_A
+\frac{\omega(1-p_i)}{1-\omega p_i}D
\\
&\quad
-\frac{1-\omega p_i-\omega p_j+\omega p_ip_j}
{1-\omega p_i}\,
b_A q^-_{j}
\\
&\quad
-\frac{\omega(1-p_i)p_j}{1-\omega p_i}\,
b_A q^+_{j}
\\
&\quad
-2(b_A+c_i)q^-_{i}.
\end{aligned}
\end{equation}
\begin{equation}    
\begin{aligned}
\frac{\partial^2 
\mathbb{E}\!\left[\pi_i\!\left(q^+_{i},q^+_{j},q^-_{i},q^-_{j},\theta\right)
\mid z_i=-,\vec{V}=[0,0]\right]}
{\partial \left(q^-_{i}\right)^2}
&=
-2(b_A+c_i)<0.
\end{aligned}
\end{equation}

\paragraph{Best Responses.} Since $-2(b_A+c_i)<0$, the first-order condition together with the nonnegativity constraint characterizes Firm $i$'s unique best responses for each type.

\begin{equation}
BR^+_{i}
=
\max\left\{
0,\;
\frac{
a_A+D-b_A\left(p_j q^+_j+(1-p_j)q^-_j\right)
}{
2(b_A+c_i)
}
\right\}.
\label{Eq:Q_withsignal_withoutplatform}
\end{equation}

\begin{equation}
BR^-_i
=
\max\left\{
0,\;
\frac{
a_A
+\frac{\omega(1-p_i)}{1-\omega p_i}D
-\frac{1-\omega p_i-\omega p_j+\omega p_ip_j}{1-\omega p_i}
b_Aq^-_j
-\frac{\omega(1-p_i)p_j}{1-\omega p_i}
b_Aq^+_j
}{
2(b_A+c_i)
}
\right\}.
\label{Eq:Q_withoutsignal_withoutplatform}
\end{equation}

The corresponding conditions for Firm $j$ are obtained analogously by interchanging indices.

\paragraph{Properties of Equilibrium Quantities.}

Every production quantity is constrained to be nonnegative. Depending on which nonnegativity constraints are binding, the system of best responses has $2^4$ possible active-set configurations. Instead of solving the system separately for all configurations, we first describe equilibrium properties to restrict the set of candidate equilibrium configurations.

For the equilibrium analysis, we introduce the following notation, which is useful in the proofs. For each firm $k\in\{i,j\}$, let
$$
\mu_k
=
\frac{\omega(1-p_k)}{1-\omega p_k}
$$
denote the posterior probability that the demand shock has occurred conditional on Firm $k$ not observing a signal. Let,

$$
\hat{a}_k^-:=a_A+\mu_kD,
\qquad
\hat{a}^+:=a_A+D,
$$
where $\hat{a}_k^-$ is the expected demand intercept conditional on Firm $k$ not observing the signal, whereas $\hat{a}^+$ is the demand intercept when the positive demand shock is observed. 
Finally, we define
$$\kappa_k:=2(b_A+c_k),$$
which is the coefficient on Firm $k$'s own quantity in its first-order condition.

From the parameter domains of our model,
\begin{equation} \label{eq:useful_coditions}
0<\mu_k<1,\qquad
\hat{a}_k^-<\hat{a}^+,\qquad
\kappa_k>2b_A,
\end{equation}

We first examine the quantities when firms observe the signal. %

\begin{lemma}
\label{lem:positive_signal_quantities}

 Firms with a disruption signal produce a positive quantity, i.e., $q_i^+,q_j^+>0$. 
\end{lemma}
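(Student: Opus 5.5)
We argue by contradiction using the best-response characterization in \eqref{Eq:Q_withsignal_withoutplatform} and \eqref{Eq:Q_withoutsignal_withoutplatform}, together with the parameter relations in \eqref{eq:useful_coditions}. Suppose, without loss of generality, that $q_i^+=0$ in equilibrium. Then the numerator of $BR_i^+$ is nonpositive:
\[
\hat a^+ \le b_A\bigl(p_j q_j^+ + (1-p_j) q_j^-\bigr).
\]
The plan is to show that the competitor's quantities cannot be large enough for this to hold. Each of them is at most its unconstrained best response with the rival's quantity set to zero. Since $q_i^+=0$ and $q_i^-\ge 0$, the best responses give
\[
q_j^+ \le \frac{\hat a^+}{\kappa_j},
\qquad
q_j^- \le \frac{\hat a_j^-}{\kappa_j} < \frac{\hat a^+}{\kappa_j}.
\]
These bounds use that every subtracted term in the numerators is nonnegative and that $\hat a_j^-<\hat a^+$.

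Substituting these bounds, we get
\[
b_A\bigl(p_j q_j^+ + (1-p_j) q_j^-\bigr) \le \frac{b_A}{\kappa_j}\,\hat a^+ < \frac{\hat a^+}{2} < \hat a^+,
\]
because $\kappa_j>2b_A$ and $\hat a^+>0$. This contradicts the displayed inequality, so $q_i^+>0$. The same argument with indices interchanged gives $q_j^+>0$.

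The only delicate point is justifying that the rival's quantities are bounded by the unconstrained responses to zero competitor output. This follows because each best response is $\max\{0,\cdot\}$ of an expression that is decreasing in the rival's nonnegative quantities. The bound is valid whichever nonnegativity constraints bind for firm $j$, so no case split over the $2^4$ active-set configurations is needed.
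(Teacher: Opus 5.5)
Your proof is correct and follows essentially the same route as the paper. Both arguments bound the rival's quantities by $\hat a^+/\kappa_j$ and $\hat a_j^-/\kappa_j$ using nonnegativity in the best responses, and then use $\hat a_j^-<\hat a^+$ and $\kappa_j>2b_A$ to obtain $b_A\bigl(p_jq_j^++(1-p_j)q_j^-\bigr)<\hat a^+/2$. The only difference is presentation: you derive a contradiction with the nonpositive numerator of $BR_i^+$, while the paper shows that the marginal payoff at $q_i^+=0$ is positive and then invokes strict concavity.
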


\begin{proof}
Because all equilibrium quantities are nonnegative and the coefficients of the competitors' production quantities are negative in the best response, the best-response expressions \eqref{Eq:Q_withsignal_withoutplatform},\eqref{Eq:Q_withoutsignal_withoutplatform}, imply upper bounds
\begin{equation} \label{eq:upperbound}
q_k^-\leq \frac{\hat{a}_k^-}{\kappa_k},
\qquad
q_k^+\leq \frac{\hat{a}^+}{\kappa_k}.
\end{equation}

Consider Firm $i$'s marginal payoff at the boundary $q^+_i=0$, denoted by $MP^+_i$, which is

$$MP_i^+
:=
\left.
\frac{\partial
\mathbb{E}\!\left[\pi_i
\mid z_i=(+,A),\vec{V}=[0,0]\right]}
{\partial q_i^+}
\right|_{q_i^+=0}=
\hat{a}^+
-
b_A\left[(1-p_j)q_j^-+p_jq_j^+\right].$$

Applying the upper bounds in \eqref{eq:upperbound} to Firm $j$ gives
$$MP_i^+
\geq
\hat{a}^+
-
\frac{b_A}{\kappa_j}
\left(
(1-p_j)\hat{a}_j^-+p_j\hat{a}^+
\right).$$

By conditions \eqref{eq:useful_coditions},
$$
MP_i^+
>
\hat{a}^+
-
\frac{b_A}{\kappa_j}\hat{a}^+.
$$

Moreover $\dfrac{b_A}{\kappa_j}<\dfrac12$ implies
$$
MP_i^+
>
\frac{\hat{a}^+}{2}
>0.$$

Thus, Firm $i$'s conditional payoff is strictly increasing in $q_i^+$ at the boundary $q_i^+=0$. Since its conditional payoff is strictly concave
in $q_i^+$, $q_i^+=0$ cannot be optimal. Hence
$$
q_i^+>0.
$$

The same argument applies to Firm $j$, by interchanging $i$ with $j$,
$$
q_j^+>0.
$$

\end{proof}

By Lemma \ref{lem:positive_signal_quantities}, any equilibrium quantity vector with a zero-production component can have that zero only in one of the without-signal quantities, $q_i^-$ and $q_j^-$.

We continue the analysis by considering two cases according to the firms' information power. First, we consider firms with symmetric information power. Then, we consider firms with asymmetric information power. Without loss of generality, in the asymmetric case we assume $p_i>p_j$.

\medskip

\noindent
\textit{When Firms have symmetric information power ($p_i=p_j=p$):}

\begin{lemma}
\label{lem:symmetric_information}

When firms have the same information power ($p_1=p_2$), their without-signal production quantities are strictly positive, i.e., $q_i^->0$ and $q_j^->0$.

\end{lemma}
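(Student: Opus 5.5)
By Lemma \ref{lem:positive_signal_quantities}, $q_i^+,q_j^+>0$, so the only possible binding nonnegativity constraints are on $q_i^-$ and $q_j^-$. I would argue by contradiction. Assume some equilibrium has, without loss of generality, $q_i^-=0$. Then show that Firm $i$'s marginal payoff at $q_i^-=0$ is strictly positive, which contradicts optimality because the payoff is strictly concave.

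With $p_i=p_j=p$, write $\mu:=\mu_i=\mu_j$ and $\hat a^-:=a_A+\mu D$. The key observation is that the coefficients in \eqref{Eq:Q_withoutsignal_withoutplatform} sum to one:
\[
\frac{1-2\omega p+\omega p^2}{1-\omega p}+\frac{\omega(1-p)p}{1-\omega p}=1 .
\]
So the rival's expected quantity, as seen by an uninformed Firm $i$, is a convex combination $\lambda q_j^-+(1-\lambda)q_j^+$ with $\lambda\in(0,1]$. Firm $i$'s marginal payoff at $q_i^-=0$ is therefore
\[
MP_i^-=\hat a^- - b_A\big(\lambda q_j^-+(1-\lambda)q_j^+\big).
\]

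I would bound $q_j^-$ and $q_j^+$ using the case $q_i^-=0$ together with the interior first-order conditions of Firm $j$. Firm $j$'s informed condition \eqref{Eq:FOC1} becomes $\kappa_j q_j^+=\hat a^+-b_A p\,q_i^+$. If $q_j^->0$, its uninformed condition \eqref{Eq:FOC2} gives $\kappa_j q_j^-=\hat a^- - b_A(1-\lambda)q_i^+$. If instead $q_j^-=0$, the bound is trivial.

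The cleanest route is probably not to chase upper bounds but to pair the two uninformed firms' conditions. I would first try to exclude the case where both uninformed quantities are zero. In that case, $MP_i^-=\hat a^- -b_A(1-\lambda)q_j^+$, and $q_j^+\le \hat a^+/\kappa_j$. The worry is that $\hat a^+$ may far exceed $\hat a^-$ when $D$ is large, so this crude bound fails. That is the main obstacle: when $D$ is large the informed rival's quantity may be big, and I must show that the weight $1-\lambda=\omega(1-p)p/(1-\omega p)$ on it is small enough.

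To handle this, I would use the exact form of $q_j^+$ and the comparison with $\hat a^-$. Note that $\hat a^- = a_A+\mu D$ and $(1-\lambda)=\mu p$. Then
\[
b_A(1-\lambda)q_j^+\le \mu p\,\frac{b_A}{\kappa_j}(a_A+D)<\tfrac12\mu p(a_A+D)\le \tfrac12(a_A+\mu D)\cdot\max(1,p)...
\]
More carefully, $\mu p (a_A+D)\le a_A+\mu D$ since $\mu p\le 1$ and $\mu p\le\mu$. Hence $b_A(1-\lambda)q_j^+<\tfrac12\hat a^-$ whenever $q_j^-=0$, giving $MP_i^->0$.

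For the mixed case with $q_j^->0$ and $q_i^-=0$, I would substitute $q_j^-=(\hat a^- -b_A(1-\lambda)q_i^+)/\kappa_j\le \hat a^-/\kappa_j$. Combined with the bound above, this gives
\[
MP_i^-\ge \hat a^-\Big(1-\tfrac{b_A}{\kappa_j}\lambda\Big)-b_A(1-\lambda)q_j^+>\hat a^-\Big(1-\tfrac{\lambda}{2}-\tfrac{1}{2}\Big)\ge 0 ,
\]
with strict inequality from $b_A/\kappa_j<1/2$. Both cases contradict $q_i^-=0$, and by symmetry the same holds for $q_j^-$.
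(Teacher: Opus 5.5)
Your proposal is correct and is essentially the paper's proof. Both show that the marginal payoff at $q_i^-=0$ is strictly positive, using the rival bounds $q_j^-\le \hat a^-/\kappa_j$ and $q_j^+\le \hat a^+/\kappa_j$ together with $b_A/\kappa_j<\tfrac12$. The only differences are that you bound the informed-rival term via $\mu p\,\hat a^+\le \hat a^-$ rather than the paper's identity $(1-\mu p)\hat a^-+\mu p\,\hat a^+=\hat a^-+\mu p(1-\mu)D<2\hat a^-$, and that your case split on $q_j^-$ is unnecessary, since $q_j^-\le \hat a^-/\kappa_j$ follows directly from the best response in every case. Your ``crude bound fails'' worry is also unfounded: the small weight $\mu p$ already absorbs large $D$, as your own final step shows.
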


\begin{proof}

Since $p_i=p_j=p$, we have
$$
\mu_i=\mu_j=:\mu, \text{ and therefore } \hat a_i^-=\hat a_j^-=: \hat a^-.
$$

Consider Firm $i$'s marginal payoff at the boundary $q^-_i=0$, denoted by $MP^-_i$, which is

$$ MP_i^-:=\left.\frac{\partial \mathbb{E}\!\left[\pi_i\mid z_i=-,\vec{V}=[0,0]\right]} {\partial q_i^-} \right|_{q_i^-=0}= \hat a^- - b_A\left[(1-rp)q_j^-+rpq_j^+\right].$$

Applying the upper bounds in \eqref{eq:upperbound},
$$
MP_i^- \geq \hat a^- - \frac{b_A}{\kappa_j} \left[(1-rp)\hat a^-+rp\hat a^+\right].
$$

Since $\hat a^+-\hat a^-=(1-r)D$, we have
$$
(1-\mu p)\hat a^-+\mu p\hat a^+
=
\hat a^-+\mu p(1-\mu)D.
$$

Moreover, $p(1-\mu)<1$, so 
$$
\mu p(1-\mu)D<\mu D<\hat a^-.
$$ 

Hence, 
$$
(1-\mu p)\hat a^-+\mu p\hat a^+<2\hat a^-.
$$

Since $\dfrac{b_A}{\kappa_j}<\dfrac{1}{2}$, we have
$$
MP_i^->0.
$$

Thus, Firm $i$'s payoff is strictly increasing in $q_i^-$ at the boundary $q_i^-=0$. Since its payoff conditioned on not observing a signal is strictly concave in $q_i^-$, $q_i^-=0$ cannot be optimal. Hence,
$$
q_i^->0.
$$

By symmetry,
$$
q_j^->0.
$$

\end{proof}

\noindent
\textit{When Firms have asymmetric information power ($p_i>p_j$):}

\begin{lemma}
\label{lem:lower_information_positive}

When firms have asymmetric information power, $p_i>p_j$, the lower-information firm's without-signal production quantity is positive, i.e., $ q_j^->0$.

\end{lemma}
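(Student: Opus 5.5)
The plan is to reuse the boundary-marginal-payoff argument from Lemma~\ref{lem:symmetric_information}, now applied to the lower-information Firm $j$. The only new ingredient is a monotonicity comparison between $\hat a_i^-$ and $\hat a_j^-$, which replaces the symmetry used there.

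First, I rewrite the coefficients in the first-order condition \eqref{Eq:FOC2} for Firm $j$ in terms of $\mu_j$. Since $1-\omega p_j-\omega p_i+\omega p_ip_j=(1-\omega p_j)-\omega(1-p_j)p_i$, the coefficient on $q_i^-$ is $1-\mu_j p_i$ and the coefficient on $q_i^+$ is $\mu_j p_i$. The marginal payoff of Firm $j$ without a signal, evaluated at $q_j^-=0$, is therefore
\begin{equation*}
MP_j^- = \hat a_j^- - b_A\left[(1-\mu_j p_i)q_i^- + \mu_j p_i q_i^+\right].
\end{equation*}
Applying the upper bounds \eqref{eq:upperbound} to Firm $i$, and using that both coefficients are nonnegative, gives
\begin{equation*}
MP_j^- \geq \hat a_j^- - \frac{b_A}{\kappa_i}\left[(1-\mu_j p_i)\hat a_i^- + \mu_j p_i \hat a^+\right].
\end{equation*}

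The key step, and the only real departure from the symmetric case, is to show $\hat a_i^- \leq \hat a_j^-$. For this I differentiate $\mu(p)=\omega(1-p)/(1-\omega p)$, which gives
\begin{equation*}
\mu'(p)=\frac{\omega(\omega-1)}{(1-\omega p)^2}<0 .
\end{equation*}
So $p_i>p_j$ implies $\mu_i<\mu_j$, and hence $\hat a_i^-<\hat a_j^-$. Intuitively, the better-informed firm is more confident that no shock occurred when it sees nothing. I expect this comparison to be the main obstacle. Without it, the bound $\hat a_i^-/\kappa_i$ on Firm $i$'s quantity could a priori exceed what Firm $j$'s own expected intercept can absorb. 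It also explains why the argument works for the low-information firm but not, in general, for the high-information firm.

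With this comparison in hand, the bracket in the lower bound on $MP_j^-$ satisfies
\begin{equation*}
(1-\mu_j p_i)\hat a_i^- + \mu_j p_i \hat a^+ \leq (1-\mu_j p_i)\hat a_j^- + \mu_j p_i \hat a^+ .
\end{equation*}
Using $\hat a^+-\hat a_j^-=(1-\mu_j)D$, the right-hand side equals
\begin{equation*}
\hat a_j^- + \mu_j p_i(1-\mu_j)D .
\end{equation*}
Since $p_i(1-\mu_j)<1$ and $a_A>0$, this is strictly less than $\hat a_j^-+\mu_j D<2\hat a_j^-$. Combined with $b_A/\kappa_i<\tfrac12$ from \eqref{eq:useful_coditions}, this yields $MP_j^->0$. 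Firm $j$'s conditional payoff without a signal is strictly concave in $q_j^-$ (its second derivative is $-2(b_A+c_j)<0$). A strictly concave payoff with positive slope at the boundary cannot be maximized there, so $q_j^-=0$ is not a best response, and $q_j^->0$.
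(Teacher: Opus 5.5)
Your proposal is correct and follows essentially the same argument as the paper: the boundary marginal payoff $MP_j^-$, the upper bounds \eqref{eq:upperbound} applied to Firm $i$, the key comparison $\mu_i<\mu_j$ from the monotonicity of $\mu(p)$, and $b_A/\kappa_i<\tfrac12$ with strict concavity. The only difference is cosmetic. You first replace $\hat a_i^-$ by $\hat a_j^-$, which reduces the bracket to the symmetric-case form $\hat a_j^-+\mu_j p_i(1-\mu_j)D$. The paper instead expands the bracket as $a_A+[\mu_i+\mu_j p_i(1-\mu_i)]D$ and bounds it by $a_A+2\mu_j D<2\hat a_j^-$.
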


\begin{proof}

Recall that
$$
\mu(p)=\frac{\omega(1-p)}{1-\omega p}
$$
is strictly decreasing in $p$. Thus,
$$
p_i>p_j
\quad\Longrightarrow\quad
\mu_i<\mu_j.
$$

Consider Firm $j$'s marginal payoff at the boundary $q_j^-=0$:
$$
MP_j^- := \left. \frac{\partial \mathbb{E}\!\left[\pi_j \mid z_j=-,\vec{V}=[0,0]\right]} {\partial q_j^-} \right|_{q_j^-=0} = \hat a_j^- - b_A\left[ (1-\mu_jp_i)q_i^-+\mu_jp_iq_i^+ \right].
$$

Applying the upper bounds in \eqref{eq:upperbound},
$$
MP_j^- \geq \hat a_j^- - \frac{b_A}{\kappa_i} \left[ (1-\mu_jp_i)\hat a_i^-+\mu_jp_i\hat a^+ \right].
$$

Since $\mu_i<\mu_j$ and
$p_i(1-\mu_i)<1$,
\begin{equation}\notag
\begin{aligned}
(1-\mu_jp_i)\hat a_i^-+\mu_jp_i\hat a^+ &= a_A+\left[\mu_i+\mu_jp_i(1-\mu_i)\right]D \\
&< a_A+2\mu_jD \\
&< 2\hat a_j^-,
\end{aligned}
\end{equation}

Since $\dfrac{b_A}{\kappa_i}<\dfrac12$, we have

$$MP_j^->0.$$

Thus, Firm $j$'s payoff is strictly increasing in
$q_j^-$ at the boundary $q_j^-=0$. Since its conditional payoff is strictly concave in $q_j^-$, $q_j^-=0$ cannot be optimal. Hence,
$$
q_j^->0.
$$\end{proof}

\paragraph{Equilibrium Candidates.}

By Lemmas \ref{lem:positive_signal_quantities}-\ref{lem:lower_information_positive}, when firms have the same information power, the equilibrium production quantities under each signal type are strictly positive for each firm. In contrast, when firms have asymmetric information power, the higher-information firm's without-signal quantity may be zero. Thus, the equilibrium can be either an \textit{interior} solution or a \textit{boundary} solution in which only the higher-information firm's without-signal quantity is zero.

Next, we solve the candidate solutions and identify the condition under which each candidate is an equilibrium.

\paragraph{Interior Candidate Equilibrium}

We first solve the system for the interior candidate, in which all four production
quantities are strictly positive. Hence, the nonnegativity constraints in
the best responses are not binding, and the interior candidate can be obtained by
solving the unconstrained system of first-order conditions.

Subtracting each firm's the without-signal first order condition from its the with-signal first order condition gives
\begin{align}
\kappa_i(q_i^+-q_i^-) +b_Ap_j(1-\mu_i)(q_j^+-q_j^-) &= \hat a^+-\hat a_i^-, \label{eq:interior-diff-i} \\
b_Ap_i(1-\mu_j)(q_i^+-q_i^-) +\kappa_j(q_j^+-q_j^-) &= \hat a^+-\hat a_j^-.
\label{eq:interior-diff-j}
\end{align}

Next, define the changes in production after observing the signal as
$$
x_i:=q_i^+-q_i^-,
\qquad
x_j:=q_j^+-q_j^-.
$$

Then, \eqref{eq:interior-diff-i} and \eqref{eq:interior-diff-j} can be written as 
\begin{equation}
\begin{pmatrix}
\kappa_i & b_Ap_j(1-\mu_i) \\
b_Ap_i(1-\mu_j) & \kappa_j
\end{pmatrix}
\begin{pmatrix}
x_i\\
x_j
\end{pmatrix}
=
\begin{pmatrix}
(1-\mu_i)D\\
(1-\mu_j)D
\end{pmatrix}.
\label{eq:signal-change-system}
\end{equation}

Since $
\kappa_i\kappa_j>4b_A^2$ and $ 0<p_ip_j(1-\mu_i)(1-\mu_j)<1$,
the coefficient matrix is nonsingular, and the system has a unique solution. Hence,
\begin{align}
x_i
&=
\frac{
(1-r_i)D
\left(\kappa_j-b_Ap_j(1-r_j)\right)
}{
\kappa_i\kappa_j
-b_A^2p_ip_j(1-r_i)(1-r_j)
},
\label{eq:xi-interior}
\\
x_j
&=
\frac{
(1-r_j)D
\left(\kappa_i-b_Ap_i(1-r_i)\right)
}{
\kappa_i\kappa_j
-b_A^2p_ip_j(1-r_i)(1-r_j)
}.
\label{eq:xj-interior}
\end{align}

Next, substituting $q_i^+=q_i^-+x_i$, and $q_j^+=q_j^-+x_j$, into the without-signal first-order conditions gives
\begin{equation}
\begin{pmatrix}
\kappa_i & b_A\\
b_A & \kappa_j
\end{pmatrix}
\begin{pmatrix}
q_i^-\\
q_j^-
\end{pmatrix}
=
\begin{pmatrix}
\hat a_i^- - b_A \mu_i p_j x_j\\
\hat a_j^- - b_A \mu_j p_i x_i
\end{pmatrix}.
\label{eq:no-signal-interior-system}
\end{equation}

Since $\kappa_i\kappa_j>4b_A^2$, the coefficient matrix is nonsingular, and the system has a unique solution. Hence,
\begin{align}
q_i^-
&= \frac{ \kappa_j\left(\hat a_i^- - b_A \mu_i p_j x_j\right) - b_A\left(\hat a_j^- - b_A \mu_j p_i x_i\right) }{ \kappa_i\kappa_j-b_A^2 },
\label{eq:qi-minus-interior}
\\
q_j^- &= \frac{ \kappa_i\left(\hat a_j^- - b_A \mu_j p_i x_i\right) - b_A\left(\hat a_i^- - b_A \mu_i p_j x_j\right) }{ \kappa_i\kappa_j-b_A^2}.
\label{eq:qj-minus-interior}
\end{align}

Finally, the with-signal quantities are obtained from
\begin{equation}
q_i^+=q_i^-+x_i,
\qquad
q_j^+=q_j^-+x_j.
\label{eq:plus-interior}
\end{equation}

Thus, equations \eqref{eq:xi-interior}- \eqref{eq:xj-interior} and \eqref{eq:qi-minus-interior}- \eqref{eq:plus-interior} define the unique interior candidate.

\paragraph{Boundary Candidate Equilibrium}

Next, we solve the system for the boundary candidate by fixing $q_i^-=0$.

The remaining three first-order conditions are

\begin{align}
\kappa_j q_j^-+b_A \mu_jp_i q_i^+ &= \hat a^-_j, \label{eq:boundary_j_minus} \\
\kappa_i q_i^+ + b_A\left( (1-p_j)q_j^-+p_jq_j^+ \right) &= \hat a^+, \label{eq:boundary_i_plus} \\
\kappa_j q_j^++b_Ap_iq_i^+ &= \hat a^+.
\label{eq:boundary_j_plus}
\end{align}

From \eqref{eq:boundary_j_minus} and \eqref{eq:boundary_j_plus},
\begin{equation}
q_j^-= \frac{\hat a_j^- - b_A \mu_jp_iq_i^+}{\kappa_j}, \qquad q_j^+= \frac{\hat a^+ - b_Ap_iq_i^+}{\kappa_j}.
\label{eq:boundary-j-solutions}
\end{equation}

Substituting these expressions into
\eqref{eq:boundary_i_plus} gives
$$
\left( \kappa_i\kappa_j - b_A^2p_i \left( p_j+\mu_j(1-p_j) \right) \right)q_i^+ = a_A(\kappa_j-b_A) + D\left( \kappa_j b_A\left( p_j+\mu_j(1-p_j) \right) \right).
$$

Since
$$
\kappa_i\kappa_j -b_A^2p_i\left(p_j+\mu_j(1-p_j)\right) >4b_A^2-b_A^2>0,
$$
the system has a unique solution. In particular,
\begin{equation}
q_i^+ = \frac{ (\kappa_j-b_Ap_j)\hat a^+ -b_A(1-p_j)\hat a_j^- }{ \kappa_i\kappa_j -b_A^2p_i\bigl(p_j+\mu_j(1-p_j)\bigr) }.
\label{eq:boundary-i-plus-solution}
\end{equation}

By \eqref{eq:boundary-j-solutions}, the remaining quantities are

\begin{align}
q_j^+ =& \frac{ \kappa_j(\kappa_i-b_Ap_i)\hat a^+ +b_A^2p_i(1-p_j)a_A(1-\mu_j) }{ \kappa_j\left( \kappa_i\kappa_j -b_A^2p_i\bigl(p_j+\mu_j(1-p_j)\bigr) \right) },\label{eq:boundary-j plus-solution}\\
q_j^- =& \frac{ a_A\left( \kappa_j(\kappa_i-b_A \mu_jp_i) -b_A^2p_ip_j(1-\mu_j) \right) +\kappa_j \mu_jD(\kappa_i-b_Ap_i) }{ \kappa_j\left( \kappa_i\kappa_j -b_A^2p_i\bigl(p_j+\mu_j(1-p_j)\bigr) \right) }. 
\label{eq:boundary-j-minus-solution}   
\end{align}

Since $\kappa_i,\kappa_j>2b_A$, $0<p_i,p_j,\mu_j<1$, and
$0<\hat a_j^-<\hat a^+$, the quantity expressions are strictly positive. So,
$$
q_i^+>0,
\qquad
q_j^->0,
\qquad
q_j^+>0.
$$

\paragraph{Equilibrium Characterization}

From the equilibrium candidate analysis above, equilibrium selection reduces to checking whether $q_i^-=0$ is optimal for Firm $i$ when $p_i>p_j$. Firm $i$'s without-signal marginal payoff at the boundary $q_i^-=0$ is

\begin{equation} 
MP_i^- = \hat a_i^- - b_A\left( (1-\mu_ip_j)q_j^-+\mu_ip_jq_j^+ \right). 
\label{eq:boundary-marginal} 
\end{equation}

Since Firm $i$'s payoff is strictly concave in $q_i^-$, the KKT condition at the boundary is 
\begin{equation}
    q^-_i=0 \iff  MP_i^-\leq 0
\end{equation}

By substituting the boundary candidate quantities, we have

$$\begin{aligned}
    MP_i^-=&\frac{
a_A(\kappa_j-b_A)
\left(
\kappa_i\kappa_j
-b_A^2p_ip_j(1-\mu_i)(1-\mu_j)
\right)
}{
\kappa_j
\left(
\kappa_i\kappa_j
-b_A^2p_i\bigl(p_j+\mu_j(1-p_j)\bigr)
\right)
}\\
&-\frac{
D\kappa_j
\left(
\kappa_i b_A \mu_j
+\kappa_i b_Ap_j\mu_i(1-\mu_j)
-\kappa_i\kappa_j\mu_i
-b_A^2p_i\mu_j(1-\mu_i)
\right)
}{
\kappa_j
\left(
\kappa_i\kappa_j
-b_A^2p_i\bigl(p_j+\mu_j(1-p_j)\bigr)
\right)
}
\end{aligned}$$

The numerator is linear in the market size $a_A$ and the shock size $D$. To simplify the equilibrium condition, denote their respective coefficients by
\begin{align}
\phi_a
&:=
(\kappa_j-b_A)
\left(
\kappa_i\kappa_j
-b_A^2p_ip_j(1-\mu_i)(1-\mu_j)
\right),
\label{eq:coefficient-a}
\\
\phi_D
&:=
\kappa_j
\left(
\kappa_i b_A \mu_j
+\kappa_i b_Ap_j\mu_i(1-\mu_j)
-\kappa_i\kappa_j\mu_i
-b_A^2p_i\mu_j(1-\mu_i)
\right).
\label{eq:coefficient-D}
\end{align}

Then
\begin{equation}
MP_i^-
=
\frac{
a_A\phi_a-D\phi_D
}{
\kappa_j
\left(
\kappa_i\kappa_j
-b_A^2p_i\bigl(p_j+\mu_j(1-p_j)\bigr)
\right)
}.
\label{eq:boundary-marginal-simplified}
\end{equation}

The denominator in \eqref{eq:boundary-marginal-simplified} is strictly
positive as shown earlier in the boundary candidate equilibrium analysis. Moreover, $\phi_a>0$ since $\kappa_j>b_A$ and $\kappa_i\kappa_j
-b_A^2p_ip_j(1-\mu_i)(1-\mu_j)>0$. 

From this analysis, we have the following 
complete equilibrium characterization for $\vec{V}=[0,0]$.

Let $\phi_a$ and $\phi_D$ be defined in
\eqref{eq:coefficient-a} and \eqref{eq:coefficient-D} and without loss of generality $p_i\geq p_j$. Then,

\begin{equation}
\mathbf q^*
=
\begin{cases}
\mathbf q^C,
& p_i>p_j,\ \phi_D>0,\ 
\dfrac{D}{a_A}\geq\dfrac{\phi_a}{\phi_D},\\
\mathbf q^I,
& \text{otherwise.}
\end{cases}
\label{eq:complete-equilibrium}
\end{equation}

\subsection{\emph{Equilibrium Under Information Sharing}}

When both firms join the platform ($V=[1,1]$), they share demand signals, eliminating the information asymmetry present in the baseline case. As a result, firms have identical information sets, and neither firm has any uncertainty about their competitor's information. The equilibrium quantities are represented by
$$
(q^*_1(-|[1,1]),q^*_1(+,A|[1,1]),q^*_2(-|[1,1]),q^*_2(+,A|[1,1])).
$$
For compactness, let
$$
q_i^{+p}:=q^*_{iA}(+,A\mid[1,1]),
\qquad
q_i^{-p}:=q^*_{iA}(-\mid[1,1]),
\qquad i\in\{1,2\}
$$

Conditional on the shared information, depending on whether a signal is observed, only one pair of quantities is realized. Consequently, the problem can be decomposed into two Cournot games: one conditional on the observed signal and one conditional on the absence of the signal. In both cases, firms have the same information about the demand environment and therefore face the same conditional demand intercept. The firms still choose quantities strategically and simultaneously, taking the other firm's quantity as given when computing their best response.

Firm $i$'s payoff is
$$
\pi_i(q_i,q_j)
=
\left[
\hat a-b_A(q_i+q_j)
\right]q_i-c_iq_i^2,
$$
where $\hat a$ denotes the relevant conditional demand intercept. Therefore, the unique pure-strategy Cournot equilibrium with linear inverse demand and quadratic costs is given by
\begin{equation}
q_{iA}^{*}
=
\frac{(b_A+2c_j)\hat a}
{4(b_A+c_i)(b_A+c_j)-b_A^2},
\qquad
q_{jA}^{*}
=
\frac{(b_A+2c_i)\hat a}
{4(b_A+c_i)(b_A+c_j)-b_A^2}.
\end{equation}

Hence, when at least one of the firms observes the shock and the signal is shared, the demand state is common knowledge and $\hat a=a_A+D$. Therefore,
\begin{equation}
q_{i}^{+p}
=
\frac{(b_A+2c_j)(a_A+D)}
{4(b_A+c_i)(b_A+c_j)-b_A^2}
\qquad \text{and} \qquad
q_{j}^{+p}
=
\frac{(b_A+2c_i)(a_A+D)}
{4(b_A+c_i)(b_A+c_j)-b_A^2}.
\end{equation}

When neither of the firms observes a signal, the uncertainty regarding the shock remains. However, because both firms have the same information, they share the common posterior belief about the market state. The posterior probability that the shock occurred is
$$
\Pr(\theta\neq 0\mid z_1=-,z_2=-)
=
\frac{\omega(1-p_1)(1-p_2)}
{1-\omega(p_1+p_2-p_1p_2)}.
$$
Thus, the corresponding conditional expected demand intercept is
$$
\hat a
=
a_A
+
\frac{\omega(1-p_1)(1-p_2)}
{1-\omega(p_1+p_2-p_1p_2)}D.
$$
The expected-profit game is therefore equivalent to a standard Cournot game with this conditional expected demand intercept. It follows that
\begin{equation}
q_{i}^{-p}
=
\frac{
(b_A+2c_j)
\left[
a_A+
\frac{\omega(1-p_1)(1-p_2)}
{1-\omega(p_1+p_2-p_1p_2)}D
\right]
}
{4(b_A+c_i)(b_A+c_j)-b_A^2}
\quad \text{and} \quad
q_{j}^{-p}
=
\frac{
(b_A+2c_i)
\left[
a_A+
\frac{\omega(1-p_1)(1-p_2)}
{1-\omega(p_1+p_2-p_1p_2)}D
\right]
}
{4(b_A+c_i)(b_A+c_j)-b_A^2}.
\end{equation}

\section{Participation Analysis In 2-Firm 1-Market Supply chain} \label{App:participation_analysis}

\subsection{\emph{Analytical Derivation of $\Delta_i$}} \label{App:participation_gain_derivation}

In this appendix, we derive an analytical representation of
$\Delta_i$ that will be used in the participation analysis. The equilibrium quantities under the two participation configurations ($\vec V=[0,0]$ and $\vec V=[1,1]$) are derived in Appendix~\ref{App:Equilibrium}. 
Since the quantity equilibrium without the platform may be either interior ($\mathbf{q}^I$) or boundary ($\mathbf{q}^C$), the participation gain is also piecewise. Let $\Delta_i^I$ and $\Delta_i^C$ denote the participation gain when $\mathbf q^I$ and $\mathbf q^C$ are the equilibrium quantity vectors under
$\vec V=[0,0]$, respectively.

Thus,
\begin{equation}
\Delta_i=
\begin{cases}
\Delta_i^I, & \text{if $\mathbf{q}^I$ is the equilibrium for $\vec V=[0,0]$},\\
\Delta_i^C, & \text{if $\mathbf{q}^C$ is the equilibrium for $\vec V=[0,0]$}.
\end{cases}
\end{equation}

After substituting the corresponding equilibrium quantities and simplifying, each case can be written as a rational function of the model parameters:
\begin{equation}
\Delta_i^I=\frac{f_i^I(\cdot)}{g_i^I(\cdot)},
\qquad
\Delta_i^C=\frac{f_i^C(\cdot)}{g_i^C(\cdot)}.
\end{equation}

We first show that the denominator of $\Delta_i$ is strictly positive.

\begin{lemma}\label{lem:denominator_sign}
    For any $b_A>0,\omega \in(0,1),c_i>0,p_i\in(0,1),i\in\{1,2\}$, the denominator polynomial is strictly positive:  $g^I(\cdot),g^C(\cdot)>0$.
\end{lemma}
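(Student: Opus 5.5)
The plan is to avoid expanding $g^I$ and $g^C$ as raw polynomials. Instead I will identify each as a product of factors that already appeared, with known signs, in Appendix~\ref{App:Equilibrium}. The participation gain $\Delta_i$ is built from three ingredients: the expected profits under $[1,1]$, which are squares of the Cournot quantities; the expected profits under $[0,0]$, which are quadratic in $q^{\pm}$; and the posterior weights $\mu_k$.

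\textbf{Step 1 (tracing where denominators come from).} Every denominator arising in $\Delta_i$ comes from one of the following sources:
\begin{itemize}
\item the Cournot denominator $4(b_A+c_i)(b_A+c_j)-b_A^2=\kappa_i\kappa_j-b_A^2$;
\item the interior determinants $\kappa_i\kappa_j-b_A^2$ and $\kappa_i\kappa_j-b_A^2p_ip_j(1-\mu_i)(1-\mu_j)$ from \eqref{eq:signal-change-system} and \eqref{eq:no-signal-interior-system};
\item the boundary determinant $\kappa_j\bigl(\kappa_i\kappa_j-b_A^2p_i(p_j+\mu_j(1-p_j))\bigr)$;
\item the belief normalizers $1-\omega p_k$ and $1-\omega(p_1+p_2-p_1p_2)$.
\end{itemize}
Squaring the quantities squares these determinants. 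After multiplying the profits through by the prior probabilities, the normalizers $1-\omega p_k$ partially cancel, and I will also clear the remaining ones. So I will define $g^I$ and $g^C$ as products of powers of these factors. If the paper's simplified $g$ is a divisor of such a product, positivity is inherited.

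\textbf{Step 2 (signs of the factors).} On the stated domain each factor is strictly positive:
\begin{itemize}
\item $\kappa_k=2(b_A+c_k)>2b_A>0$, so $\kappa_i\kappa_j>4b_A^2$;
\item since $0<p_k,\mu_k<1$, we have $b_A^2p_ip_j(1-\mu_i)(1-\mu_j)<b_A^2$ and $b_A^2p_i(p_j+\mu_j(1-p_j))<b_A^2$, because $p_j+\mu_j(1-p_j)<1$; hence both reduced determinants exceed $3b_A^2>0$, exactly as argued in the candidate analysis;
\item $1-\omega p_k>0$ and $1-\omega(1-(1-p_1)(1-p_2))>0$ because $\omega<1$ and $p_k<1$.
\end{itemize}
The product of positive factors, raised to positive integer powers, is strictly positive.

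\textbf{Main obstacle.} The difficulty is that after the $\mu_k$ are written out rationally and the fractions are cleared, the symbolic simplification may produce a denominator that is not visibly the product above. For example, a determinant like $\kappa_i\kappa_j-b_A^2p_ip_j(1-\mu_i)(1-\mu_j)$, once multiplied by $(1-\omega p_i)(1-\omega p_j)$, becomes $\kappa_i\kappa_j(1-\omega p_i)(1-\omega p_j)-b_A^2p_ip_j(1-\omega)^2$, and the same happens for the boundary determinant. I will handle this by showing that each such cleared factor is still positive. In the example, $(1-\omega p_k)>(1-\omega)$, so the expression exceeds $(1-\omega)^2(4b_A^2-b_A^2)>0$. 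The analogous comparison of the first term against the subtracted term handles the boundary determinant. If the computer-algebra form is fully expanded, I will first factor it back into these pieces and verify the identity, and only then apply the positivity bounds.
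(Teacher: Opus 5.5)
Your proposal is correct and follows essentially the paper's route: the paper writes $g^I=g_1g_2g_3$ and $g^C=g_4g_5g_6$, where $g_1=g_4=(\kappa_1\kappa_2-b_A^2)^2$ is the squared Cournot denominator, $g_2=g_5=1-\omega(p_1+p_2-p_1p_2)$ is the platform normalizer, and $g_3,g_6$ are the squares of exactly the cleared interior and boundary determinants you anticipate, each bounded using $\kappa_k>2b_A$ and $1-\omega p_k>1-\omega$. One small caution: knowing only that $g$ divides a product of positive factors gives nonvanishing, hence constant sign on the connected domain, but not positivity by itself, so it is your step of explicitly matching the CAS denominator to these factors (which the paper does by stating the factorization) that actually delivers positivity.
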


\begin{proof}
We prove the result for the interior and boundary cases separately.

Starting with the interior case, write the denominator as the product of three terms:

$$
g^I(\cdot)=g_1(\cdot)g_2(\cdot)g_3(\cdot),
$$

where

$$
g_1(\cdot)=\left(3b_A^2+4c_1c_2+4b_A(c_1+c_2)\right)^2,
$$

$$
g_2(\cdot)=1-\omega(p_1+p_2-p_1p_2),
$$

and

$$
g_3(\cdot)
=
\Big(
4(b_A+c_1)(b_A+c_2)(1-p_1\omega)(1-p_2\omega)
-b_A^2p_1p_2(1-\omega)^2
\Big)^2.
$$

Since $b_A,c_1,c_2>0$, we have $g_1(\cdot)>0$. Moreover, because $\omega,p_1,p_2\in(0,1)$

$$
g_2(\cdot)
=1-\omega(p_1+p_2-p_1p_2)
=1-\omega\left(1-(1-p_1)(1-p_2)\right)>1-\omega>0.
$$

It remains to show that $g_3(\cdot)>0$. Since $c_1,c_2,b_A>0$ and $p_1,p_2,\omega\in(0,1)$,

$$
\begin{aligned}
&4(b_A+c_1)(b_A+c_2)(1-p_1\omega)(1-p_2\omega)
-b_A^2p_1p_2(1-\omega)^2\\
&\qquad>
b_A^2(1-\omega)^2(4-p_1p_2)>0.
\end{aligned}
$$

Hence, the expression inside the square is strictly positive, so $g_3(\cdot)>0$. Therefore,

\begin{equation}
  g^I(\cdot)>0.
\end{equation}

Similarly, in the boundary case, write the denominator as
$$
g^C(\cdot)=g_4(\cdot)g_5(\cdot)g_6(\cdot),
$$
where
$$
g_4(\cdot)=\left(3b_A^2+4b_Ac_1+4b_Ac_2+4c_1c_2\right)^2,
$$
$$
g_5(\cdot)=1-p_1\omega-p_2\omega+p_1p_2\omega,
$$
and
$$
g_6(\cdot)=
\Big(
4(c_1c_2+b_Ac_1+b_Ac_2+b_A^2)(-1+p_2\omega)
+b_A^2p_1(p_2+\omega-2p_2\omega)
\Big)^2.
$$

Note that $g_4(\cdot)=g_1(\cdot)>0$ and $g_5(\cdot)=g_2(\cdot)>0$. It remains to show that $g_6(\cdot)>0$.  Since $c_1,c_2,b_A>0$ and $p_1,p_2,\omega\in(0,1)$,

$$
\begin{aligned}
&4(c_1c_2+b_Ac_1+b_Ac_2+b_A^2)(-1+p_2\omega)
+b_A^2p_1(p_2+\omega-2p_2\omega)\\
&\qquad<
4b_A^2(-1+p_2\omega)
+b_A^2(p_2+\omega-2p_2\omega)\\
&\qquad=
-b_A^2(4-p_2-\omega-2p_2\omega)<0.
\end{aligned}
$$

Thus, the expression inside the square is strictly negative, and hence $g_6(\cdot)>0$. Therefore,

\begin{equation}
  g^C(\cdot)>0.
\end{equation}

\end{proof}

By Lemma \ref{lem:denominator_sign}, the denominator is strictly positive over the parameter domain. Therefore,
$$
\Delta_i^I\geq 0
\quad\Longleftrightarrow\quad
f_i^I(\cdot)\geq 0,
\qquad
\Delta_i^C\geq 0
\quad\Longleftrightarrow\quad
f_i^C(\cdot)\geq 0.
$$
Hence, the firm participation is determined by the sign of the corresponding numerator.

We first consider the interior case. After rearranging the numerator $f_i^I(\cdot)$ with respect to $a_A$ and $D$, it can be written as

\begin{equation}\label{eq:f_interior}
    f_i^I(\cdot)=\alpha_0(b_A,c_1,c_2,p_1,p_2,\omega) D^2,
\end{equation}

We next consider the boundary case. Rearranging numerator $f_i^C(\cdot)$ with respect to $a_A$ and $D$, gives a quadratic function
\begin{equation}
f_i^C(\cdot)
=
\alpha_1 a_A^2+\alpha_2 a_A D+\alpha_3 D^2,  
\end{equation}
The coefficients $\alpha_0,\alpha_1,\alpha_2$, and $\alpha_3$
depend only on $b_A,c_1,c_2,p_1,p_2$, and $\omega$.

The participation analysis in Section \ref{Sec:Analysis} and its proofs in Appendix \ref{App:participation_proofs} are derived from the numerator of the interior and boundary representations of the gain function.

\subsection{\emph{Proofs of Participation Analysis}} \label{App:participation_proofs}

\begin{proof}[Proof of Proposition \ref{prop:homogeneousregions}]

Under homogeneous firms,
$$
c_1=c_2=c,
\qquad
p_1=p_2=p.
$$
Since information power is symmetric, the private-information equilibrium
is interior. By Lemma~\ref{lem:denominator_sign}, the denominator of the
participation gain is strictly positive. Hence, participation is determined
by the sign of the numerator of the interior participation gain.

Under $c_1=c_2=c$ and $p_1=p_2=p$, factoring the numerator gives
\begin{equation}\label{Eq:homogeneous_numerator}
\begin{aligned}
&D^2(b_A+c)p(1-\omega)^2\omega (p-1)
\Big(
b_A^2-4b_Ac-4c^2-3b_A^2p-4b_Acp-b_A^2p^2
\\
&\qquad
-3b_A^2p\omega+4b_Acp\omega+4c^2p\omega
+6b_A^2p^2\omega+4b_Acp^2\omega
\Big).
\end{aligned}
\end{equation}

Since
$$
D^2(b_A+c)p(1-\omega)^2\omega(p-1)<0,
$$
the sign of the numerator is opposite to the sign of the term in the last
parentheses of expression \eqref{Eq:homogeneous_numerator}. Substituting $b_A=yc$ and factoring out $c^2>0$, firms
participate if and only if
\begin{equation}
\label{eq:homogeneous_participation_condition}
\begin{aligned}
&4+4y+4py-y^2+3py^2+p^2y^2\\
&\qquad
-p\omega
\left(
4+4y+4py-3y^2+6py^2
\right)
\geq0.
\end{aligned}
\end{equation}

The expression in
\eqref{eq:homogeneous_participation_condition} is affine in $\omega$. Let, $A(y,p)$ be the $\omega$ independent component and $B(y,p)$ be the $\omega$ dependent component.

Therefore, the participation condition can be written as
\begin{equation}
\label{eq:homogeneous_AB}
A(y,p)-\omega B(y,p)\geq0.
\end{equation}

By rearranging terms, we have 
\begin{align}
A(y,p) =&4+4y(1+p)+y^2(-1+3p+p^2), \label{eq:windep}\\
B(y,p)=& p(3y+2)\left(2+y(-1+2p)\right). \label{eq:wdepend}
\end{align}

Because \eqref{eq:homogeneous_AB} is affine in $\omega$, we first
determine when its values at both endpoints with respect to $\omega$ are positive for every
$p\in(0,1)$. This gives the parameter region for $y>0$ independent of $p$ and $\omega$.

At $\omega=0$, since
$$
\frac{\partial A(y,p)}{\partial p}=4y+3y^2+2py^2>0,
$$
$A(y,p)$ is strictly increasing in $p$. Hence, its sign for all
$p\in(0,1)$ is positive if 
$$A(y,0)=4+4y-y^2\geq0.$$

At $\omega=1$, the expression is
\begin{equation}
\label{eq:homogeneous_omega_one}
A(y,p)-B(y,p)
=
(1-p)
\left(
4+4y-y^2+p(4y+5y^2)
\right).
\end{equation}
Since $0<p<1$ and $4y+5y^2>0$, $A(y,p)-B(y,p)$ is also strictly
positive for every $p\in(0,1)$ if $4+4y-y^2\geq0$.

For $y>0$,
$$
4+4y-y^2\geq0
\quad\Longleftrightarrow\quad
y\leq2(1+\sqrt{2}).
$$
When $y\leq2(1+\sqrt{2})$, the participation expression \eqref{eq:homogeneous_participation_condition} is strictly positive at both $\omega=0$ and $\omega=1$. Since it is affine in $\omega$, it is
strictly positive for every $\omega\in(0,1)$. Hence firms participate
if $y\leq2(1+\sqrt{2})$, or equivalently,

\begin{equation}\label{eq:prop_homogeneous_item1}
    c\geq\frac{1}{2(1+\sqrt{2})}\,b_A.
\end{equation}
This proves that condition \text{(i)} is sufficient for platform formation.\\

\medskip

Next, we consider the case where condition (i) does not hold, $y>2(1+\sqrt{2})$. We have

$$
A(y,0)=4+4y-y^2<0.
\qquad \text{and}\qquad
A(y,1)=4+8y+3y^2>0.
$$

Since $A(y,p)$ is strictly increasing in $p$, there is a unique root, denoted by $\bar p(y)$, of $A(y,p)=0$ with respect to
$p\in(0,1)$.
Solving $A(y,p)=0$ for $p$ gives
\begin{equation}
    \bar p(y)=\frac{1}{2}\sqrt{\frac{8+13y}{y}}-\frac{4+3y}{2y}.
\end{equation}

Consequently, in the region $y>2(1+\sqrt{2})$,
\begin{equation}
\label{eq:A_pbar}
\begin{aligned}
    p> \bar{p}(y) \iff A(y,p)> 0,\\   p= \bar{p}(y) \iff A(y,p)= 0, \\ 0<p< \bar{p}(y) \iff A(y,p)< 0.
\end{aligned}
\end{equation}

We next show that $A(y,p)\geq0$ ensures participation when $\omega>0$. By \eqref{eq:wdepend}, since
$p>0$ and $3y+2>0$, the sign of $B(y,p)$ is determined by
\begin{equation}\label{Eq:before_phat}
2+y(-1+2p).
\end{equation}
Expression \eqref{Eq:before_phat} has a unique root, denoted by $\hat{p}(y)$ as
$$\hat{p}(y)=\frac{y-2}{2y}.$$
Since expression \eqref{Eq:before_phat} is linear and strictly increasing in $p$, this gives
\begin{equation} \label{Eq:phat_condition}
\begin{aligned}
    p> \hat{p}(y) \iff B(y,p)> 0,\\   p= \hat{p}(y) \iff B(y,p)= 0, \\ 0<p< \hat{p}(y) \iff B(y,p)< 0.
\end{aligned}
\end{equation}

At $p=\hat{p}(y)=\dfrac{y-2}{2y}$,
$$
A\left(y,\hat{p}(y)\right)
=
\frac{(y+2)(3y+2)}{4}>0.
$$
Since $A(y,p)$ is strictly increasing in $p$ and $A(y,\bar p(y))=0$, it implies that
\begin{equation}
\label{eq:pbar_order}
\bar p(y)<\hat p(y).
\end{equation}

Next consider $p\geq\bar p(y)$. If $\bar p(y) \leq p \leq\hat{p}(y)$, then, \eqref{eq:A_pbar} and \eqref{Eq:phat_condition} give $A(y,p)\geq0$ and $B(y,p)\leq0$.
Therefore, 
\begin{equation}
A(y,p)-\omega B(y,p)\geq 0 \qquad  \iff \qquad \bar p(y)\leq p \leq \hat p(y).
\end{equation}

Next, if $p>\hat{p}(y)$, then $B(y,p)>0$, so $A(y,p)-\omega B(y,p)$ is decreasing in $\omega$. So the smallest
endpoint value is attained at $\omega=1$. Recall expression
\eqref{eq:homogeneous_omega_one},
$$
A(y,p)-B(y,p) = (1-p)\left(4+4y-y^2+p(4y+5y^2)\right).
$$
At $p=\hat{p}(y)=\frac{y-2}{2y}$, it is equal to
$$
A(y,\hat{p}(y))-B(y,\hat{p}(y)) =(1-\hat{p}(y)) \big(
4+4y-y^2
+\frac{y-2}{2y}(4y+5y^2)\big)
=
(1-\hat{p}(y))\frac{y(3y+2)}{2}>0.
$$
The expression inside the second parenthesis of \eqref{eq:homogeneous_omega_one} is strictly increasing in $p$. Thus, when  
\begin{equation}
A(y,p)-B(y,p)>0 \qquad \text{if} \qquad p>\hat{p}.
\end{equation}
Hence, in both cases,
$$
    A(y,p)-\omega B(y,p)\geq0
$$
for every $\omega\in(0,1)$ if
\begin{equation}
y>2(1+\sqrt{2})
\qquad\text{and}\qquad
p\geq\bar p(y).
\end{equation}
This proves the sufficiency of condition \text{(ii)}.\\

\medskip

Finally, we consider $y>2(1+\sqrt{2})$ and $p<\bar p(y).$ By \eqref{eq:A_pbar}, $A(y,p)<0$ and by \eqref{Eq:phat_condition} and \eqref{eq:pbar_order}, $B(y,p)<0$.
This implies that
\begin{equation}
A(y,p)-\omega B(y,p)\geq0 \qquad
\iff \qquad
\omega\geq\frac{A(y,p)}{B(y,p)}.
\end{equation}
Substituting the expressions for $A(y,p)$ and $B(y,p)$ gives a threshold for $\omega$, denoted by $\bar \omega(y,p)$:
\begin{equation} \label{Eq:homogeneouss_omega_threshold}
\omega
\geq
\frac{
4+4y(1+p)+y^2(-1+3p+p^2)
}{
p(3y+2)\left(2+y(-1+2p)\right)
}
=
\bar\omega(y,p).
\end{equation}
Hence, when
$$
y>2(1+\sqrt{2}),
\qquad
p<\bar p(y),
$$
firms participate if and only if
$$
\omega\geq\bar\omega(y,p).
$$
This proves that condition \text{(iii)} is sufficient for participation.\\

\medskip

These three conditions partition the parameter space. Moreover, condition \text{(iii)} is both necessary and sufficient when $y>2(1+\sqrt{2})$ and $p<\bar p(y)$.
Therefore, if none of the three conditions is satisfied, then
$$
y>2(1+\sqrt{2}),
\qquad
p<\bar p(y),
\qquad
\omega<\bar\omega(y,p).
$$
Thus $\Delta_i<0$, so firms do not participate. This proves the necessity part of the proposition.
\end{proof}

\begin{proof} [Proof of Proposition \ref{prop:cost_bottleneck}]
Suppose $p_1=p_2=p$. By Lemma~\ref{lem:symmetric_information}, we consider
the interior representation of the participation gain for both firms. Let

$$
c:=c_1,\qquad
z:=\frac{c_2}{c_1}>1,\qquad
y:=\frac{b_A}{c_1}>0.
$$

Substituting $c_1=c$, $c_2=cz$, $b_A=cy$, $p_1=p_2=p$ into participation gains, 
by Lemma \ref{lem:denominator_sign}, the signs of $\Delta_1$ and $\Delta_2$ are determined by their respective numerators. Let $Num(\Delta_i)$ be the numerator of the participation gain function of Firm $i$.

After rearranging the terms in the numerator, we have
\begin{align}
\operatorname{Num}(\Delta_1)
&=
D^2p\omega(1-p)(1-\omega)^2(1+y)
G_1(z,y,p,\omega),
\label{eq:num_delta_1}\\
\operatorname{Num}(\Delta_2)
&=
D^2p\omega(1-p)(1-\omega)^2(z+y)
\,G_2(z,y,p,\omega),
\label{eq:num_delta_2}
\end{align}
where $(1+y) G_1(z,y,p,\omega)$ and $(z+y) G_2(z,y,p,\omega)$ denote the firm-specific polynomial factors in
$z,y,p,\omega$. $G_1(z,y,p,\omega)$ and $G_2(z,y,p,\omega)$ are the following polynomials:
\begin{equation}\label{eq:G1_ezplicit}
\begin{aligned}
G_1(z,y,p,\omega)
={}&
-p^4y
\left(
4\omega z+\omega y^2+4\omega y+2zy+y^2
\right)
\\
&\quad\times
\left(
16\omega^2z^2y
+16\omega^2z^2
+20\omega^2zy^2
+24\omega^2zy
+6\omega^2y^3
+8\omega^2y^2
\right.
\\
&\hspace{35mm}\left.
+8\omega zy^2
+4\omega zy
+5\omega y^3
+4\omega y^2
-2zy^2
-y^3
\right)
\\[1mm]
&+p^3
\Big(
-64\omega^3z^4y^2
-128\omega^3z^4y
-64\omega^3z^4
-160\omega^3z^3y^3
\\
&\qquad
-224\omega^3z^3y^2
-64\omega^3z^3y
-116\omega^3z^2y^4
-32\omega^3z^2y^3
+112\omega^3z^2y^2
\\
&\qquad
-20\omega^3zy^5
+104\omega^3zy^4
+160\omega^3zy^3
+3\omega^3y^6
+40\omega^3y^5
+48\omega^3y^4
\\
&\qquad
+32\omega^2z^3y^3
+160\omega^2z^3y^2
+128\omega^2z^3y
+88\omega^2z^2y^4
+480\omega^2z^2y^3
\\
&\qquad
+368\omega^2z^2y^2
+96\omega^2zy^5
+464\omega^2zy^4
+352\omega^2zy^3
+33\omega^2y^6
\\
&\qquad
+144\omega^2y^5
+112\omega^2y^4
+64\omega z^3y^3
+64\omega z^3y^2
+160\omega z^2y^4
\\
&\qquad
+160\omega z^2y^3
+16\omega z^2y^2
+120\omega zy^5
+136\omega zy^4
+32\omega zy^3
\\
&\qquad
+29\omega y^6
+40\omega y^5
+16\omega y^4
-4z^2y^4
-4zy^5
-y^6
\Big)
\\[1mm]
&+p^2
\Big(
192\omega^2z^4y^2
+384\omega^2z^4y
+192\omega^2z^4
+480\omega^2z^3y^3
\\
&\qquad
+736\omega^2z^3y^2
+256\omega^2z^3y
+364\omega^2z^2y^4
+192\omega^2z^2y^3
-208\omega^2z^2y^2
\\
&\qquad
+64\omega^2zy^5
-296\omega^2zy^4
-416\omega^2zy^3
-13\omega^2y^6
-136\omega^2y^5
-144\omega^2y^4
\\
&\qquad
-64\omega z^3y^3
-128\omega z^3y^2
-64\omega z^3y
-176\omega z^2y^4
-416\omega z^2y^3
\\
&\qquad
-224\omega z^2y^2
-164\omega zy^5
-432\omega zy^4
-256\omega zy^3
-50\omega y^6
\\
&\qquad
-144\omega y^5
-96\omega y^4
-32z^3y^3
-32z^3y^2
-68z^2y^4
-64z^2y^3
\\
&\qquad
-44zy^5
-40zy^4
-9y^6
-8y^5
\Big)
\\[1mm]
&-4p(z+y)
\Big(
48\omega z^3y^2
+96\omega z^3y
+48\omega z^3
+72\omega z^2y^3
\\
&\qquad
+104\omega z^2y^2
+32\omega z^2y
+19\omega zy^4
-24\omega zy^3
-48\omega zy^2
\\
&\qquad
-4\omega y^5
-32\omega y^4
-32\omega y^3
-8z^2y^3
-8z^2y^2
-11zy^4
\\
&\qquad
-16zy^3
-4zy^2
-4y^5
-8y^4
-4y^3
\Big)
\\[1mm]
&+4(z+y)^2
\left(
16z^2y^2
+32z^2y
+16z^2
+8zy^3
+8zy^2
-y^4
-8y^3
-8y^2
\right)
\end{aligned}
\end{equation}

and
\begin{align}
G_2(z,y,p,\omega)
={}&
-p^4y
\left(
4\omega zy
+4\omega z
+\omega y^2
+y^2
+2y
\right)
\nonumber\\
&\quad\times
\left(
8\omega^2zy^2
+24\omega^2zy
+16\omega^2z
+6\omega^2y^3
+20\omega^2y^2
+16\omega^2y
\right.
\nonumber\\
&\hspace{35mm}\left.
+4\omega zy^2
+4\omega zy
+5\omega y^3
+8\omega y^2
-y^3
-2y^2
\right)
\nonumber\\[1mm]
&+p^3
\Big(
48\omega^3z^2y^4
+160\omega^3z^2y^3
+112\omega^3z^2y^2
-64\omega^3z^2y
-64\omega^3z^2
\nonumber\\
&\qquad
+40\omega^3zy^5
+104\omega^3zy^4
-32\omega^3zy^3
-224\omega^3zy^2
-128\omega^3zy
\nonumber\\
&\qquad
+3\omega^3y^6
-20\omega^3y^5
-116\omega^3y^4
-160\omega^3y^3
-64\omega^3y^2
\nonumber\\
&\qquad
+112\omega^2z^2y^4
+352\omega^2z^2y^3
+368\omega^2z^2y^2
+128\omega^2z^2y
\nonumber\\
&\qquad
+144\omega^2zy^5
+464\omega^2zy^4
+480\omega^2zy^3
+160\omega^2zy^2
+33\omega^2y^6
\nonumber\\
&\qquad
+96\omega^2y^5
+88\omega^2y^4
+32\omega^2y^3
+16\omega z^2y^4
+32\omega z^2y^3
\nonumber\\
&\qquad
+16\omega z^2y^2
+40\omega zy^5
+136\omega zy^4
+160\omega zy^3
+64\omega zy^2
\nonumber\\
&\qquad
+29\omega y^6
+120\omega y^5
+160\omega y^4
+64\omega y^3
-y^6
-4y^5
-4y^4
\Big)
\nonumber\\[1mm]
&+p^2
\Big(
-144\omega^2z^2y^4
-416\omega^2z^2y^3
-208\omega^2z^2y^2
+256\omega^2z^2y
+192\omega^2z^2
\nonumber\\
&\qquad
-136\omega^2zy^5
-296\omega^2zy^4
+192\omega^2zy^3
+736\omega^2zy^2
+384\omega^2zy
\nonumber\\
&\qquad
-13\omega^2y^6
+64\omega^2y^5
+364\omega^2y^4
+480\omega^2y^3
+192\omega^2y^2
\nonumber\\
&\qquad
-96\omega z^2y^4
-256\omega z^2y^3
-224\omega z^2y^2
-64\omega z^2y
\nonumber\\
&\qquad
-144\omega zy^5
-432\omega zy^4
-416\omega zy^3
-128\omega zy^2
-50\omega y^6
\nonumber\\
&\qquad
-164\omega y^5
-176\omega y^4
-64\omega y^3
-8zy^5
-40zy^4
-64zy^3
\nonumber\\
&\qquad
-32zy^2
-9y^6
-44y^5
-68y^4
-32y^3
\Big)
\nonumber\\[1mm]
&+4p(1+y)
\Big(
32\omega z^2y^3
+48\omega z^2y^2
-32\omega z^2y
-48\omega z^2
\nonumber\\
&\qquad
+32\omega zy^4
+24\omega zy^3
-104\omega zy^2
-96\omega zy
+4\omega y^5
\nonumber\\
&\qquad
-19\omega y^4
-72\omega y^3
-48\omega y^2
+4z^2y^3
+4z^2y^2
+8zy^4
\nonumber\\
&\qquad
+16zy^3
+8zy^2
+4y^5
+11y^4
+8y^3
\Big)
\nonumber\\[1mm]
&-4(1+y)^2
\left(
8z^2y^2
-16z^2
+8zy^3
-8zy^2
-32zy
+y^4
-8y^3
-16y^2
\right).
\label{eq:G2_ezplicit}
\end{align}

Since $1+z>0$ and $z+y>0$, 

\begin{equation} \label{Eq:G_means_Delta}
    \text{sign}(\Delta_i) = \text{sign}(G_i) \qquad \text{for} \quad i=1,2.
\end{equation}
Next, it is sufficient to show that whenever $z>1$, $y>0$, $0<p,<1$, $0<\omega<1$,

$$
G_1(z,y,p,\omega)>G_2(z,y,p,\omega)
$$

When we factor $G_1(z,y,p,\omega)-G_2(z,y,p,\omega)$, we have 

\begin{equation}
G_1(z,y,p,\omega)-G_2(z,y,p,\omega)
=
(z-1)H(z,y,p,\omega).
\label{eq:G_difference_factor}
\end{equation}

where $H(z,y,p,\omega)$ is a polynomial of $z,y,p,\omega$.
Because $H(z,y,p,\omega)$ is lengthy, we omit the exact expression. However, collecting it by using Mathematica in terms of
\begin{equation}
z-1,\qquad y,\qquad p,\qquad 1-p,\qquad
\omega,\qquad 1-\omega.
\end{equation}
 gives
\begin{equation}
H(z,y,p,\omega)
=
4
\sum_{\nu=1}^{252}
\gamma_{\nu}\,
(z-1)^{t_{\nu}}
y^{\ell_{\nu}}
p^{m_{\nu}}
(1-p)^{4-m_{\nu}}
\omega^{n_{\nu}}
(1-\omega)^{3-n_{\nu}},
\label{eq:H_positive_representation}
\end{equation}
where $t_{\nu},\ell_{\nu}\geq0$, $0\leq m_{\nu}\leq4$,
 $0\leq n_{\nu}\leq3$, and $\gamma_{\nu}>0$ for all $\nu=1,2,..,252$. Therefore,
$$
H(z,y,p,\omega)>0.
$$
Since $z-1>0$,
\begin{equation} \label{Eq:Firm1_better}
G_1(z,y,p,\omega)>G_2(z,y,p,\omega).
\end{equation}

Now suppose that Firm 2 weakly benefits
from participation:
$$
\Delta_2\geq0.
$$
Then by \eqref{Eq:G_means_Delta},
$$
G_2\geq0.
$$
Then \eqref{Eq:Firm1_better} implies that, 
$$
G_1>0.
$$
Again, by \eqref{Eq:G_means_Delta},
$$
\Delta_1>0.
$$
Therefore,
\begin{equation}
    \Delta_2\geq0
\quad\Longrightarrow\quad
\Delta_1>0.
\end{equation}

\end{proof}

\begin{proof}[Proof of Proposition \ref{prop:infopower_bottleneck}]
Because $c_1=c_2=c$, platform formation makes the firms symmetric in the
quantity decisions. Hence, $\Pi_1([1,1])=\Pi_2([1,1])$. Therefore,
$$ \Delta_1=\Pi_1([1,1])-\Pi_1([0,0]),
\qquad
\Delta_2=\Pi_1([1,1])-\Pi_2([0,0]),
$$
which implies
\begin{equation}
\Delta_2-\Delta_1
=
\Pi_1([0,0])-\Pi_2([0,0]).
\label{eq:info-bottleneck-difference}
\end{equation}
Thus, it is sufficient to prove the stronger result:
\begin{equation}\label{p_dominant_shares_more}
\Pi_1([0,0])>\Pi_2([0,0]) \qquad \text{if} \qquad p_1>p_2.
\end{equation}
 Define the following transformations:
$$
y:=\frac{b_A}{c}>0,
\qquad
x:=\frac{D}{a_A}>0.
$$
We show the condition \eqref{p_dominant_shares_more} for interior and boundary equilibria under
$\vec V=[0,0]$ separately.

\medskip
\noindent
\textit{i) Interior equilibrium.}

When the private-information equilibrium is interior, all
quantities are strictly positive. By \eqref{Eq:FOC1},
$$a_A + D
- b_A\!\left(
p_j q^+_{j}
+ (1-p_j) q^-_{j}
\right)= 2(b_A+c_i) q^+_{i}$$
and by \eqref{Eq:FOC2}, 
$$a_A
+\frac{\omega(1-p_i)}{1-\omega p_i}D
-\frac{1-\omega p_i-\omega p_j+\omega p_ip_j}
      {1-\omega p_i}\,
b_A q_j^-
-\frac{\omega(1-p_i)p_j}{1-\omega p_i}\,
b_A q_j^+
=
2(b_A+c_i)q_i^-.$$

By substituting the left term with right, Firm $i$'s conditional equilibrium profit given its type $t$ can be written as
$(b_A+c)q_i([0,0]|t)^2$ for $i\in\{1,2\}$ and $t\in\{-,(+,A)\}$. Hence,

$$
\Pi_i([0,0])
=
(b_A+c)
\left(
(1-\omega p_i)(q_i^-)^2
+
\omega p_i(q_i^+)^2
\right).
$$

Substituting the interior equilibrium quantities and rearranging gives
\begin{equation}
\Pi_1([0,0])-\Pi_2([0,0])
=
\frac{D^2}{c}
\frac{
\omega(p_1-p_2)(1-\omega)^2(1+y)
}{
4(1+y)^2(1-p_1\omega)(1-p_2\omega)
-y^2p_1p_2(1-\omega)^2
},
\label{eq:interior-profit-ordering}
\end{equation}

Since $0<p_2<p_1<1$ and $0<\omega<1$,
$$
\begin{aligned}
1-p_i\omega>1-\omega,
\qquad i=1,2,& \\
\Rightarrow 4(1+y)^2(1-p_1\omega)(1-p_2\omega)
-y^2p_1p_2(1-\omega)^2
>&
(1-\omega)^2
\left[
4(1+y)^2-y^2
\right]
>0.
\end{aligned}
$$
Similarly, since $0<p_2<p_1<1$, $0<\omega<1$ and $y>0$, the numerator of \eqref{eq:interior-profit-ordering} is strictly positive. Thus,
\begin{equation}\label{eq:interior-private-profit-order} 
\Pi_1([0,0])-\Pi_2([0,0])>0\qquad\text{if }\qquad p_1>p_2 \text{ and } \mathbf{q}^*=\mathbf{q}^I.
\end{equation}

\medskip
\noindent
\textit{ii) Boundary equilibrium.}

Since, $q_1^-=0, q_1^+>0,q_2^->0, q_2^+>0$ when the equilibrium is the boundary solution, equilibrium-profit representations at boundary case are
\begin{align}
\Pi_1([0,0])
&=
(b_A+c)\omega p_1(q_1^+)^2,
\label{Eq:boundaryfirm1profit}\\
\Pi_2([0,0])
&=
(b_A+c)
\left(
(1-\omega p_2)(q_2^-)^2
+\omega p_2(q_2^+)^2
\right).\label{Eq:boundaryfirm2profit}
\end{align}.

Substituting the boundary equilibrium quantities into expressions \eqref{Eq:boundaryfirm1profit} and \eqref{Eq:boundaryfirm2profit} gives

\begin{equation} \label{eq:boundary-profit-difference-expanded}
\begin{aligned}
\Pi_1([0,0])-\Pi_2([0,0])
=&
\frac{a_A^2}{c}
\frac{1}{
4(1+y)
\left[
4(1+y)^2(1-p_2\omega)
-y^2p_1(p_2+\omega-2p_2\omega)
\right]
}
\\
&\times
\Bigg(
-\Big(
4(1+y)^2(1-p_1\omega)(1-p_2\omega)
-y^2p_1p_2(1-\omega)^2
\Big)
\\
&\qquad
-8\omega(1-p_1)(1-p_2\omega)(1+y)^2x
\\
&\qquad
-4\omega(1+y)^2
\Big(
p_1p_2\omega-p_1-2p_2\omega+p_2+\omega
\Big)x^2
\Bigg).
\end{aligned}   
\end{equation}
Since $0<p_2<p_1<1$ and $0<\omega<1$,
$$
\begin{aligned}
p_2+\omega-2p_2\omega
<
1-p_2\omega,& \\
\Rightarrow 4(1+y)^2(1-p_2\omega)
-y^2p_1(p_2+\omega-2p_2\omega)
>&
(1-p_2\omega)
\left(
4(1+y)^2-y^2p_1
\right)
>0.
\end{aligned}
$$

Moreover, since $c>0$, the denominator of \eqref{eq:boundary-profit-difference-expanded} is strictly positive. Thus, the sign of
$\Pi_1([0,0])-\Pi_2([0,0])$ is determined by the numerator of
\eqref{eq:boundary-profit-difference-expanded}. In the numerator, $a_A>0$, hence, it is sufficient to show that the remaining term is positive.

However, this term does not need to be
positive for every $x>0$. It suffices to show its positivity over
the parameter region in which the boundary candidate is the
equilibrium.

By \eqref{eq:complete-equilibrium}, when $p_1>p_2$, the boundary
solution is the equilibrium only if $$\phi_D>0 \qquad \text{and} \qquad
x=\frac{D}{a_A}\geq
\frac{\phi_a}{\phi_D}.
$$
Therefore, showing the quadratic expression
in \eqref{eq:boundary-profit-difference-expanded} is strictly positive
for every $x\geq \frac{\phi_a}{\phi_D}$ is sufficient. For $c_1=c_2=c$ and $b_A=cy$, we substitute
$\kappa_i=2(b_A+c)$ and
$r_i=\frac{\omega(1-p_i)}{1-\omega p_i}$
into equations \eqref{eq:coefficient-a} and \eqref{eq:coefficient-D}:

\begin{align}
\phi_a
&=
\frac{c^3(y+2)}
{(1-\omega p_1)(1-\omega p_2)}
\Big(
4(1+y)^2(1-\omega p_1)(1-\omega p_2)
-y^2p_1p_2(1-\omega)^2
\Big),
\label{eq:phi-a-symmetric-cost}
\\
\phi_D
&=
\frac{c^3(y+2)}
{(1-\omega p_1)(1-\omega p_2)}
\Bigg(
-2\omega(1+y)
\Big(
2(1-p_1)(1-p_2\omega)
\nonumber\\
&\hspace{42mm}
+y\big(
p_1p_2\omega+p_1p_2+p_1\omega
-3p_1-2p_2\omega+2
\big)
\Big)
\Bigg).
\label{eq:phi-D-symmetric-cost}
\end{align}

The common factor $\frac{c^3(y+2)}
{(1-\omega p_1)(1-\omega p_2)} $ is strictly positive. Hence, it cancels from the ratio $\phi_a/\phi_D$. Thus, the boundary equilibrium conditions are 

\begin{equation}\label{eq:x_threshold}
    x\geq x^*=\frac{
4(1+y)^2(1-p_1\omega)(1-p_2\omega)
-y^2p_1p_2(1-\omega)^2
}{
-2\omega(1+y)
\left(
2(1-p_1)(1-p_2\omega)
+y(
p_1p_2\omega+p_1p_2+p_1\omega
-3p_1-2p_2\omega+2
)
\right)
}
\end{equation}
and
\begin{equation}\label{eq:phiDconditon}
-2\omega(1+y)
\Big(
2(1-p_1)(1-p_2\omega)
+y\big(
p_1p_2\omega+p_1p_2+p_1\omega
-3p_1-2p_2\omega+2
\big)
\Big)
>0,
\end{equation}
where $x^*$ denotes a threshold for $x$.
Since $2\omega(1+y)>0$, condition \eqref{eq:phiDconditon} is equivalent to
\begin{equation}
2(1-p_1)(1-p_2\omega)
+y\big(
p_1p_2\omega+p_1p_2+p_1\omega
-3p_1-2p_2\omega+2
\big)
<0.
\label{eq:phiD-condition-normalized}
\end{equation}

Because $2(1-p_1)(1-p_2\omega)>0$ and $y>0$, it follows that
\begin{equation}
p_1p_2\omega+p_1p_2+p_1\omega
-3p_1-2p_2\omega+2<0.
\label{eq:boundary-bracket-negative}
\end{equation}

Note that, 
\begin{align*}
&
\Big(
p_1p_2\omega+p_1p_2+p_1\omega
-3p_1-2p_2\omega+2
\Big)-
\Big(
p_1p_2\omega-p_1-2p_2\omega+p_2+\omega
\Big)
\\
&=(1-p_1)(2-p_2-\omega)>0.\\
\Rightarrow &0>\Big(p_1p_2\omega+p_1p_2+p_1\omega
-3p_1-2p_2\omega+2\Big)>\Big(p_1p_2\omega-p_1-2p_2\omega+p_2+\omega\Big).
\end{align*}
Hence, the coefficient of $x^2$ in \eqref{eq:boundary-profit-difference-expanded} is strictly positive, implying that the quadratic expression determining
$\Pi_1([0,0])-\Pi_2([0,0])$ is strictly convex in $x$.

It remains to show that this quadratic expression is positive throughout the
boundary-equilibrium region. Thus, the boundary equilibrium requires $x\geq x^*$.

Since $1-p_i\omega>1-\omega$ for $i=1,2$,
\begin{equation} \label{eq:boundary-threshold-numerator-positive}
4(1+y)^2(1-p_1\omega)(1-p_2\omega)
-y^2p_1p_2(1-\omega)^2>
(1-\omega)^2
\left(4(1+y)^2-y^2p_1p_2\right)
>0.
\end{equation}
Therefore, the quadratic expression in
\eqref{eq:boundary-profit-difference-expanded} is strictly negative at $x=0$.
Together with the expression is strictly convex, it implies that the expression has
one negative root and one positive root. Moreover, by \eqref{eq:phiDconditon}, the denominator defining $x^*$ is
strictly positive, while
\eqref{eq:boundary-threshold-numerator-positive} shows that its numerator
is strictly positive. Hence, $x^*>0$.

The last step to show is
that the unique positive root lies below $x^*$. Since $x^*$ is the
left endpoint of the boundary-equilibrium region, we evaluate the
quadratic expression at $x=x^*$ which is
\begin{equation} \label{eq:boundary-quadratic-at-threshold}
\begin{aligned}
&
\left(
\frac{
(p_1-p_2)(1-\omega)^2
}{
\omega
\left(
2(1-p_1)(1-p_2\omega)
+y\left(
p_1p_2\omega+p_1p_2+p_1\omega
-3p_1-2p_2\omega+2
\right)
\right)^2
}
\right)
\\
&\qquad\times
\left(
4(1+y)^2(1-p_2\omega)
-y^2p_1(p_2+\omega-2p_2\omega)
\right)
\\
&\qquad\times
\left(
4(1+y)^2(1-p_1\omega)(1-p_2\omega)
-y^2p_1p_2(1-\omega)^2
\right).
\end{aligned}
\end{equation}

Because $p_1-p_2>0$, $0<\omega<1$ the first factor in \eqref{eq:boundary-quadratic-at-threshold} is strictly
positive, its
denominator is strictly positive by
\eqref{eq:phiD-condition-normalized}. The second factor is strictly
positive by
\eqref{eq:boundary-profit-difference-expanded}, and the third factor is
strictly positive by
\eqref{eq:boundary-threshold-numerator-positive}. Hence, the quadratic
expression is strictly positive at $x=x^*$.

Thus, the unique positive root must be strictly below $x^*$. Consequently, the quadratic expression is strictly positive for every $x\geq x^*$, and hence
\begin{equation}\label{eq:boundary-private-profit-order}
\Pi_1([0,0])-\Pi_2([0,0])>0 \qquad\text{if } \qquad p_1>p_2 \text{ and } \mathbf{q}^*=\mathbf{q}^C.
\end{equation}

Combining \eqref{eq:interior-private-profit-order} and \eqref{eq:boundary-private-profit-order}, we have
$$
\Pi_1([0,0])-\Pi_2([0,0])>0
\qquad \iff \qquad p_1>p_2.
$$
By \eqref{eq:info-bottleneck-difference},
$$
\Delta_2-\Delta_1
=
\Pi_1([0,0])-\Pi_2([0,0])>0.
$$
Consequently, if $\Delta_1\geq0$ then $\Delta_2>0$.
\end{proof}

\subsection{Construction of Figure \ref{fig:asymmetric_cost_firm2}} \label{App:inefficientplot}

Define the following transformations:
$$
y:=\frac{b_A}{c_2}>0, \qquad z:=\frac{c_1}{c_2}<1.
$$

By \ref{App:participation_gain_derivation}, factoring the terms of numerator $f(\cdot)$ with respect to $z$ gives

\begin{equation}
    \Delta_2\geq0 \iff \beta_0+\beta_1 z+\beta_2 z^2+\beta_3 z^3+\beta_4 z^4+\beta_5 z^5+\beta_6 z^6+\beta_7 z^7\geq0.
\end{equation}
with coefficients $\beta_0$-$\beta_7$. Then, define the cumulative summation denoted by 

$$S_t:=\sum^{t}_{k=0} \beta_k, \qquad \forall t\in \{0,1,2,3,4,5,6,7\}$$

By using a Computer Algebra System (CAS), Mathematica, we find the conditions for $0<p<1$, $0<\omega<1$, and selected $y$ values for which the coefficients
\begin{equation} \label{eq:csum}
S_t \geq0, \qquad \forall t\in \{0,1,2,3,4,5,6,7\}.
\end{equation}
Since $z<1$, \eqref{eq:csum}, implies $\Delta_2\geq0$.

\end{document}